\numberwithin{equation}{section}
\begin{document}

\title
{Constructible sheaves and  functions  up to infinity}
\author{Pierre Schapira}
\maketitle
\begin{abstract}
We  introduce the category  of  b-analytic manifolds, a natural tool to define constructible sheaves and  functions up to infinity. We study with some details the operations on these objects and also recall the Radon transform for constructible functions. 
 \end{abstract}
 
 {\renewcommand{\thefootnote}{\mbox{}}
\footnote{Key words: constructible sheaves, constructible functions, subanalytic geometry, Radon transform}
\footnote{MSC:  55N99, 32B20, 32S60}
\footnote{This research  was supported by the  ANR-15-CE40-0007 ``MICROLOCAL''.}
\addtocounter{footnote}{-3}
}
  
\tableofcontents

\section*{Introduction}
Sheaf theory is a mathematical  tool to treat the dichotomy local/global and it is not surprising that it appears now as  essential in 
topological data analysis (TDA),  its use in this field appearing first in Justin Curry's thesis~\cite{Cu13}. 
Of course, sheaves have to be treated in their derived version. To illustrate this point,
see Example~\ref{exa:fourier} below.
 
 On the other-hand, sheaf theory is a very general theory, perhaps too general for applications.  In  TDA 
 one essentially encounters sheaves associated to subsets which are topologically ``reasonable'' and there is a perfectly suited framework for such sheaves,  namely that of constructible sheaves or sometimes,  on real vector spaces, piecewise linear (PL) sheaves (see~\cite{KS21a}). 
 The triangulated category of constructible sheaves over a commutative Noetherian ring $\cor$ on a real analytic manifold plays an increasing role in various fields of mathematics and is  well  understood. 
 
 To an abelian or a triangulated category, one naturally associates its Grothendieck group: any  function defined 
 on the objects of the this category,  additive with respect to exact sequences  or to distinguished triangles and   with values in a commutative group, factorizes uniquely through the Grothendieck group and, in some sense, this group contains all the additive informations of the category.  
 When $\cor$ is a field of characteristic $0$, the  Grothendieck group of the triangulated category of constructible sheaves is known to be isomorphic  to the group of  constructible functions as well as to that of Lagrangian cycles.

 Recall that constructible functions and Lagrangian cycles first appeared in the  complex analytic setting with Masaki Kashiwara~\cite{Ka73}
 and in the  algebraic setting with Robert  MacPherson~\cite{MP74}. 
 In the complex setting,  Lagrangian cycles  were studied  for their functorial properties by several people and in particular by Victor Ginsburg~\cite{Gi86} and Claude Sabbah~\cite{Sab85}. The real case was first treated  in~\cite{Ka85}. See also~\cite{KS90}*{Ch.~IX, Notes}  for an history of the subject.
 Lagrangian cycles are not so easy to describe, contrarily to constructible functions and we shall not study them here.

The Euler calculus of constructible functions has been introduced independently by Oleg~Viro (see~\cite{Vi88}) in the complex analytic setting and by the  author in the subanalytic setting (see~\cite{Sc89}). It has many applications, particularly to tomography {\em i.e.,} real Radon transform, see~\cite{Sc95} (see also  Lars Ernstr\"{o}m~\cite{Er94} for complex projective duality) and more generally  in TDA  where it appears  in particular in  sensing (see~\cite{CGR12} for a survey) and shape analysis~\cite{CMT18} and also in the study of  persistence modules through their rank invariants and their local Euler characteristic also known as Betti curve in the  community~\cite{Umeda17}.

A constructible function $\phi$ on a real analytic manifold $X$ is mathematically very simple: it is a $\Z$-valued function, the sets $\opb{\phi}(m)$ ($m\in\Z$) being all subanalytic and the family of such sets being locally finite. It is not difficult (with the tools of subanalytic geometry at hands) to check that the set $\CF(X)$ of constructible functions on $X$ is a commutative unital algebra and that the inverse image ({\em i.e.,} composition) of such a function by a  morphism of real analytic manifolds   $f\cl Z\to X$ is again constructible.

Things become more unusual when looking at direct images, in particular integration. Assume that $\phi$ has compact support. One may write $\phi$ as a finite sum $\sum_{i\in I} c_i\un_{K_i}$ where $c_i\in\Z$, $K_i$ is a compact subanaytic subset of $X$ and for $S\subset X$, $\un_{S}$ is the characteristic function of $S$. Then 
one defines the integral of $\phi$ by the formula
\eqn
&&\int_X\phi=\sum_{i\in I}c_i\cdot\chi(K_i)
\eneqn
where $\chi(K_i)$ denotes the Euler-Poincar{\'e} index of $K_i$. (This definition does not depend on the decomposition of $\phi$--see the comments after~\eqref{eq:intcst}.) For a morphism $f\cl X\to Y$ of real analytic manifolds, one defines the integral along $f$  of 
a function  $\phi\in\CF(X)$ whose support is proper with respect to $f$ by setting for $y\in Y$, 
\eqn
&&(\int_f\phi)(y)=\int_X\phi\cdot\un_{\opb{f}(y)},
\eneqn
and one checks that one obtains a constructible function on $Y$.
 This integral has all properties of classical integrals (linearity and  Fubini theorem--that is,  functoriality), except that it is not positive (the integral of $\un_{(0,1)}$ is $-1$) and a set reduced to one point has integral $1$. 
In fact, one easily translates all operations on constructible sheaves to operations on constructible functions. In particular duality makes sense for constructible functions and commutes with direct images.

Constructible sheaves and functions cause problems at infinity. For example, the set $\N$ is subanalytic in $\R$ 
(contrarily to the set $\{1/n;n\in\N\}$) but of course no finiteness properties may be obtained in this case. 
 Hence,  we  shall define the notion of being ``constructible up to infinity'', as mentioned in the title.
For that purpose we introduce the category of b-analytic manifolds. An object $\fX$ is  an open embedding
$X\subset \bX$ of smooth real analytic manifolds with $X$ subanalytic and relatively compact in $\bX$, and a  morphism 
$f\cl\fX\to\fY$ is a real analytic map $f\cl X\to Y$ such that the graph of $f$ is subanalytic in $\bX\times\bY$. 
Then a subset of $X$ is 
``subanalytic  up to infinity''--we shall also say ``b-subanalytic'', for short-- if it is subanalytic in $\bX$.  (As a non-example, $\N$ is not 
subanalytic up to infinity in $\R$ whatever the choice of $\R_\infty$.)
As we shall see,  this notion is  much more natural than the usual one and makes calculations easier. For example, the direct and inverse images for sheaves commute now  with duality (see below for a precise statement), non proper convolution becomes associative, etc.

 The notion of being subanalytic  up to infinity is  closely related to 
that of  definable sets and  of $o$-minimal structures, well known from the specialists (see in particular~\cites{VD98, DM96}) and constructible sheaves and functions in this  framework have already been defined in~\cites{Schu03, EP14}. Nevertheless, our approach for sheaves, based on the notion of  micro-support,  is of a different nature and provides a convenient setup to use microlocal sheaf theory while benefiting of the finiteness properties enjoyed in the framework of $o$-minimal structure
 (see for instance~\cite{CCG21}.

{\bf Sections~\ref{section:shv} and~\ref{section:cstfct}} are detailed reviews on  (derived) sheaves and constructible functions, posted here for the reader's convenience.

In {\bf Section~\ref{section:shvinfty}} we define a derived  sheaf constructible up to infinity on $X$ as a constructible sheaf  whose micro-support is subanalytic in the cotangent bundle $T^*\bX$. We shall also say, for short, that such a sheaf is ``b-constructible''. This is equivalent to saying  that its (proper or non proper) direct image in $\bX$ is again constructible. Note that such a property  already appeared  in~\cite{KS21}.
We briefly study the six operations on the triangulated category of  b-constructible sheaves.  Contrarily to the classical constructible case, the two inverse images $\opb{f}$ and $\epb{f}$ are exchanged by duality,  the two direct images $\roim{f}$ and $\reim{f}$ are constructible without any properness assumptions and, again, are  exchanged by duality.  As a nice application, we find that  non proper convolution on a real vector space $\BBV$ is well defined on constructible sheaves up to infinity and is associative. Such a non proper convolution appears when using
 the so-called $\gamma$-topology, associated with a closed convex proper cone $\gamma$ of $\BBV$. This topology, already introduced in~\cite{KS90}, plays an increasing  role in TDA.  For example,   Betti curves and surfaces of multi-parameters persistence modules are examples of constructible functions for the $\gamma$-topology.

In {\bf Section~\ref{section:fctinfty}},  we define  the space $\CF(\fX)$ of constructible functions up to infinity and study with some care the operations on such functions. 
Contrarily to the classical case, we have now two kind of integrals, proper and non proper, and, as for sheaves,  these operations are  defined without any properness  hypothesis.  Moreover, they are exchanged by duality. On a real vector space $\BBV$, we study with some care constructible functions for the $\gamma$-topology.

In {\bf Section~\ref{section:correspondence}}, posted here for easier accessibility, we recall (and adapt) the main results of~\cite{Sc95} in which we obtain an inversion formula for the Radon transform of constructible functions. This formula asserts that one can recover a constructible function on a real vector space $\BBV$ from the knowledge of the Euler-Poincar{\'e} index of its restriction to all affine hyperplanes. For example, if $\dim\BBV=3$, one can reconstruct a compact subanalytic subset from the knowledge of the number of connected components and holes of the restriction of the compact set to all slices (affine planes).

To conclude this introduction, let us recall that the Euler calculus of constructible functions already had many applications in TDA, especially under the impulse of Robert Ghrist and his collaborators (see in particular [CGR12]). Very recently, in a paper partly based on some results exposed here, Vadim Lebovici [Leb21] introduces the very promising idea of hybrid transform of constructible functions, a transform which combines classical Lebesgue integration and the Euler calculus. This new idea generalizes and unifies several previous results of specialists of TDA.

\medskip
{\bf Acknowledgement.} 
I warmly thank   Ezra Miller  for several fruitful comments on a previous version of this paper
as well as Fran{\c c}ois Petit for several stimulating discussions.
  
 \medskip
{\bf Convention.} 
In this paper, $\cor$ denotes a commutative unital Noetherian ring  with finite global dimension (see {\em e.g.}~\cite{KS90}*{exe.~I.~28}). From Section~\ref{section:cstfct} and until the end of the paper, $\cor$ is a field of characteristic $0$. 

\section{A short review on sheaves}\label{section:shv}

 In this section, we shall give a very brief overview of sheaf theory in its derived setting. 
We shall  assume that the reader has some basic notions on sheaves. In particular, we do not recall the definitions of presheaves and sheaves, neither the fundamental result which asserts that the forgetful functor, from sheaves to presheaves, admits  a left adjoint. We denote by $\Psh(\cor_X)$ the abelian category of presheaves on $X$ with values in $\md[\cor]$ and by $\md[\cor_X]$ the full abelian subcategory consisting of sheaves. Hence, by definition, a morphism of sheaves is a morphism of the underlying presheaves.
We  refer to~\cite{KS90} for a detailed exposition. 

\subsubsection*{Some notations}
Recall that a topological  space is {\em  good} if it is Hausdorff, locally compact, countable at infinity  (that is, countable union of compact subsets) and of finite flabby dimension.
 This last condition means that there exists an integer $d$ such that any sheaf admits a resolution of length  $\leq d$ by flabby sheaves. (Recall that a sheaf is flabby if any section on an open subset extends to the whole space.)  It is satisfied for example by  $C^0$-manifolds of dimension $\leq d-1$. 

For a space $X$, we  denote by $\Delta_X$  the diagonal of $X\times X$ and if $f\cl X\to Y$ is a map, we denote by $\Gamma_f$ its graph in $X\times Y$. We denote by $\rmpt$ the space consisting of a single element and by $a_X\cl X\to\rmpt$ the unique map from $X$ to $\rmpt$. 

Given topological spaces $X_i$ ($i=1,2,3$) we set
$X_{ij}=X_i\times X_j$, $X_{123}=X_1\times X_2\times X_3$.    We denote by $q_i\cl X_{ij}\to X_i$ and $q_{ij}\cl X_{123}\to X_{ij}$ the projections.
\eq\label{diag:123}
&&\ba{l}\xymatrix{
&X_{12}\ar[ld]_-{q_1}\ar[rd]^-{q_2}&   && &X_{123}\ar[ld]_{q_{12}}\ar[rd]^-{q_{23}}\ar[d]|-{q_{13}}&        \\
   X_1&&X_2                                    & & X_{12}&X_{13}&X_{23}                                                          
}
\ea\eneq
For $A\subset X_{12}$ and $B\subset X_{23}$, one sets 
\eq\label{diag:123B}
&&A\times_{2}B=A\times_{X_2}B=\opb{q_{12}}A\cap\opb{q_{23}}B,\quad A\conv[2] B=q_{13}(A\times_{2}B).
\eneq

\subsubsection*{Basic operations on sheaves}

We consider a commutative unital Noetherian  ring $\cor$  of finite global dimension. However, assuming that  $\cor$ is a field would be sufficient  for most applications.

Let us first only consider sheaves, passing to the derived categories later.

Given two sheaves $F$ and $G$ on $X$, one defines their {\bf tensor product} $F\tens G$ as the sheaf associated to the presheaf $U\mapsto F(U)\tens G(U)$, 
($U$ open in $X$).

The {\bf internal hom},  denoted $\hom$, is the presheaf $U\mapsto \Hom(F\vert_U,G\vert_U)$ where $\Hom$ is taken in the category $\Psh(\cor_U)$ and it appears that this presheaf is a sheaf as soon as $G$ is a sheaf.
One proves (\cite{KS90}*{Prop.~2.2.9}) that $(\tens,\hom)$ is a pair of adjoint functors, that is, for three sheaves $F,G,H$
\eqn
&&\Hom(F\tens G,H)\simeq \Hom(F,\hom(G,H)),
\eneqn
functorially in $F,G,H$.

Now consider a continuous map $f\cl X\to Y$. If $F$ is a sheaf on $X$, its {\bf direct image} denoted $\oim{f}F$ is the presheaf on $Y$ which, to $V$ open in $Y$, associates $F(\opb{f}V)$. It is easily checked that this presheaf is a sheaf. 

The {\bf inverse image} is more delicate. If $G$ is a sheaf on $Y$, one first defines it inverse image as a presheaf, $\popb{f}G$, as follows. 
For $U$ open in $X$, $\popb{f}G(U)=\sindlim G(V)$ where $V$ ranges through the family of open subset of $Y$ such that $U\subset \opb{f}V$.
Then the inverse image $\opb{f}G$ is the sheaf associated with the presheaf $\popb{f}G$. 
One proves (\cite{KS90}*{Prop.~2.3.3}) that $(\opb{f},\oim{f})$ is a pair of adjoint functors, that is 
\eqn
&&\Hom(\opb{f}G,F)\simeq \Hom(G,\oim{f}F),
\eneqn
functorially in $F,G$.
Hence $\opb{f}$ is right exact and $\oim{f}$ is left exact. In fact, 
$\opb{f}$ is exact.

As a combination of these functors we get the external product. In the situation of~\eqref{diag:123}, for $F_i\in \md[\cor_{X_i}]$, $i=1,2$, one sets
\eq\label{eq:externprod}
&&F_1\etens F_2\eqdot\opb{q_1}F_1\tens\opb{q_2}F_2.
\eneq

One denotes by $\cor_X$ the {\bf constant sheaf} on $X$ with stalk $\cor$. It is defined  as $\cor_X=\opb{a_X}\cor$, after having identified $\md[\cor]$ and $\md[\cor_{\rmpt}]$. 
The sheaf $\cor_X$ is also the sheaf of locally constant functions on $X$ with values in $\cor$. One defines similarly the sheaf $M_X$ for $M\in\md[\cor]$. 

Consider a  closed subset $S$ of $X$ and denote by $j_S\cl S\into X$ the embedding. One sets 
$\cor_{XS}\eqdot \oim{j_S}\cor_S$. This is  the sheaf on $X$ of functions with values in $\cor$ and which are locally constant on $S$ 
 and $0$ elsewhere. Now set $U=X\setminus S$. One defines the sheaf $\cor_{XU}$ by the exact sequence
\eqn
&&0\to \cor_{XU}\to\cor_X\to\cor_{XS}\to 0.
\eneqn
A locally closed set $Z$ is the (non unique)  intersection of a closed set $T$ and an open set $V$. One sets
$\cor_{XZ}\eqdot\cor_{XT}\tens\cor_{XV}$, this last sheaf depending uniquely on $Z$. One often writes $\cor_Z$ instead of $\cor_{XZ}$, especially when $Z$ is closed in $X$.
For a sheaf $F$ on $X$, one then sets
\eqn
&&F_Z\eqdot F\tens\cor_{XZ}.
\eneqn
Note that the functor $\scbul \tens\cor_{XZ}$ is exact. Moreover, if $U$ is open in $Z$ and $S$ is closed in $Z$, there are natural morphisms $F_U\to F_Z$ and $F_Z\to F_S$. 

Assuming that $X$ and $Y$ are good topological spaces, there is also a notion of {\bf proper direct image} denoted $\eim{f}F$.  
It is defined as follows, for $F$ a sheaf on $X$:
\eqn
&&\eim{f}F=\indlim[U]\oim{f} F_U
\eneqn
where $U$ ranges over the family of open subsets of $X$ such that the map $f$ is proper on $\ol U$. Hence, $\eim{f}F$ is a subsheaf of $\oim{f}F$.
In particular, if $f$ is proper on $X$ (or better, on $\supp(F)$), then $\eim{f}F\isoto\oim{f}F$. 

One checks (\cite{KS90}*{Prop.~2.5.4}) that if $Z$ is locally closed in $X$, denoting by $j_Z\cl Z\into X$ the embedding, then 
$F_Z\simeq\eim{j_Z}\opb{j_Z}F$. 

\subsubsection*{The six Grothendieck operations} 

Sheaf theory takes its full strength when treated in the derived setting, the preceding functors being replaced with their derived
version.  We denote by $\Derb(\cor_X)$ the bounded derived category of sheaves of $\cor$-modules on $X$ and simply calls an object of this category ``a sheaf''. An object of $\Derb(\cor_X)$ may be represented by  a bounded complex of sheaves
$F^\scbul$ and  a quasi-isomorphism $u\cl F^\scbul\to G^\scbul$ becomes  an isomorphism in $\Derb(\cor_X)$.
(A quasi-isomorphism is a morphism which  induces isomorphisms on the cohomology objects.) Note that morphisms  of  $\Derb(\cor_X)$ are not easy to describe.

The bifunctor $\tens$ being right exact, one has to replace it with its left derived functor $\ltens$, 
 and similarly with the functor $\etens$ that one replaces with its left derived functor $\letens$. . By the hypothesis that the ring $\cor$ has finite global dimension, the derived functor applied to objects of the bounded derived category  takes its values in   this category.

The functors 
$\oim{f}$, $\eim{f}$ and the bifunctor $\hom$ being left exact, one has to replace them with their right derived versions, 
$\roim{f}$, $\reim{f}$ and  $\rhom$. To calculate a right derived functor, for example $\roim{f}F$, the  recipe is to represent $F$ by 
a complex of injective sheaves and to apply $\oim{f}$ to this complex. 

Let us illustrate the strength of the derived approach with an example. 
\begin{example}\label{exa:fourier}
Consider  a real finite dimensional vector space $\BBV$ and a closed  proper cone $\gamma$ with vertex at $0$. Denote by $\gamma^\circ$ the polar cone in $\BBV^*$. This last cone  is convex and only  allows us to recover the convex hull of  $\gamma$. However, if one replaces $\gamma$ with  the sheaf $\cor_\gamma$  
 and replaces  the polar cone with  the Fourier-Sato transform  (see~\cite{KS90}*{\S~3.7}) of  $\cor_\gamma$,  a transform which uses the six Grothendieck operations, 
 then no information is lost and one recovers $\cor_\gamma$, hence  the initial cone $\gamma$, even if this cone  is not convex.  
\end{example}

Let us come back to the non derived operations described above. Taking the derived functors we get two pairs of adjoint functors
\eqn
&&(\ltens,\rhom),\quad (\opb{f},\roim{f}).
\eneqn
The functor $\eim{f}$ does not have an adjoint but the functor $\reim{f}$ has a right adjoint (see~\cite{KS90}*{\S~3.1}) 
\eqn
&&\epb{f}\cl \Derb(\cor_Y)\to\Derb(\cor_X)
\eneqn
and we get the pair of adjoint functors (in the derived categories) 
\eqn
&&(\reim{f},\epb{f}).
\eneqn
On a topological manifold $X$, the dualizing complex $\omega_X$  is defined by 
 $\omega_X\eqdot\epb{a_X}\cor_{\{\rmpt\}}$.  One  proves (see~\cite{KS90}*{\S~3.3})  that
 \eqn
 &&\omega_X\simeq\ori_X\,[\dim X]
 \eneqn
 where $\ori_X$ is the orientation sheaf on $X$, $\dim X$ is the dimension of  $X$ and $\ori_X\, [\dim X]$ is the shifted object.
 We shall  encounter  the duality functors
\eqn
&&\RD'_X(\scbul)=\rhom(\scbul,\cor_X),\quad \RD_X=\rhom(\scbul,\omega_X).
\eneqn

\subsubsection*{Kernels}\label{sect:Kconv}
For good topological spaces $X_i$'s as above, one often calls an object $K_{ij}\in\Derb(\cor_{X_{ij}})$ {\em a kernel}.
One defines  as usual the   convolution (one also says ``composition'')  of kernels 
\eq\label{eq:convker}
&&K_{12}\cconv[2] K_{23}\eqdot \reim{q_{13}}(\opb{q_{12}}K_{12}\ltens\opb{q_{23}}K_{23}).
\eneq
If there is no risk of confusion, we write $\conv$ instead of $\cconv[2]$. 

It is easily checked, and well known, that convolution is associative, namely given three kernels $K_{ij} \in\Derb(\cor_{X_{ij}})$, $i=1,2,3$, $j=i+1$ one has 
an isomorphism 
\eq\label{eq:assockernels}
&&(K_{12}\conv K_{23})\conv K_{34}\simeq K_{12}\conv (K_{23}\conv K_{34}),
\eneq
this isomorphism satisfying natural compatibility conditions that we shall not make here explicit. 

Of course, this construction applies in the particular case where $X_i=\rmpt$ for some $i$.  In this case, let us change our notations 
to $X_1=X$ and $X_2=Y$.
If  $K\in\Derb(\cor_{X\times Y})$ and $F\in\Derb(\cor_X)$, one usually sets $\Phi_K(F)=F\conv K$. Hence
\eq\label{eq:PhiK}
&&\Phi_K(F)= F\conv K=\reim{q_2}(\opb{q_1}F\ltens K). 
\eneq
We shall also use the right adjoint of the functor $\Phi_K(\cdot)$, namely the functor $\Psi_K(\cdot)$ (see~\cite{KS90}*{\S~3.6}), defined for $G\in\Derb(\cor_Y)$ by:
\eq\label{eq:PsiK}
&&\Psi_K(G)=\roim{q_1}\rhom(K,\epb{q_2}G).
\eneq

Hence:
\eqn
&&\RHom[\Derb(\cor_Y)](\Phi_K(F),G)\simeq \RHom[\Derb(\cor_X)](F,\Psi_K(G)).
\eneqn

 For $K\in\Derb(\cor_{X\times Y})$, set $K^v=\oim{v}K$ where $v$ is the map $X\times Y\isoto Y\times X$, $(x,y)\mapsto(y,x)$. 

\begin{lemma}\label{le:opbyker}
Let $f\cl X\to Y$, $F\in\Derb(\cor_X)$ and  $G\in\Derb(\cor_Y)$. Set for short $K_f=\cor_{\Gamma_f}$. Then 
\eqn
&\opb{f}G\simeq K_f\conv G=\Phi_{K_f^v}G,& \quad \roim{f}F\simeq \roim{q_2}\rhom(K_f,\epb{q_1}F)=\Psi_{K_f^v}F,\\
&\reim{f}F\simeq F\conv K_f=\Phi_{K_f}F,&\quad \epb{f}G\simeq \roim{q_1}\rhom(K_f,\epb{q_2}G)=\Psi_{K_f}G.
\eneqn
\end{lemma}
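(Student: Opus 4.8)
This statement is elementary, and the natural route is to factor $f$ through its graph. Write $i_f\cl X\to X\times Y$ for the map $x\mapsto(x,f(x))$, whose image is $\Gamma_f$; since $Y$ is Hausdorff, $\Gamma_f$ is closed and $i_f$ is a (proper) closed embedding, so that $K_f=\cor_{\Gamma_f}\simeq\reim{i_f}\cor_X\simeq\roim{i_f}\cor_X$. Moreover $q_2\circ i_f=f$ and $q_1\circ i_f$ is the identity of $X$. I would then deduce all four formulas from the projection formula $\reim{i_f}(H\ltens\opb{i_f}L)\simeq\reim{i_f}H\ltens L$, the factorization $\reim{f}\simeq\reim{q_2}\circ\reim{i_f}$, and uniqueness of right adjoints.

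First, the two ``covariant'' identities. For $\reim{f}$: since $\opb{i_f}\opb{q_1}F\simeq\opb{(q_1\circ i_f)}F\simeq F$, the projection formula for $i_f$ gives $K_f\ltens\opb{q_1}F\simeq\reim{i_f}\cor_X\ltens\opb{q_1}F\simeq\reim{i_f}F$, and applying $\reim{q_2}$ yields $\Phi_{K_f}F=\reim{q_2}(\opb{q_1}F\ltens K_f)\simeq\reim{f}F$. For $\opb{f}$: by definition $K_f\conv G=\reim{q_1}(K_f\ltens\opb{q_2}G)$, and the same projection formula, now with $\opb{i_f}\opb{q_2}G\simeq\opb{f}G$, gives $K_f\ltens\opb{q_2}G\simeq\reim{i_f}\opb{f}G$, hence $K_f\conv G\simeq\reim{q_1}\reim{i_f}\opb{f}G\simeq\opb{f}G$. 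To identify $K_f\conv G$ with $\Phi_{K_f^v}G=G\conv K_f^v$, I would transport along the transposition $v\cl X\times Y\isoto Y\times X$: denoting by $p_1,p_2$ the projections of $Y\times X$, one has $p_1\circ v=q_2$, $p_2\circ v=q_1$ and $\opb{v}K_f^v\simeq K_f$, so pulling back by the isomorphism $v$ carries $\reim{p_2}(\opb{p_1}G\ltens K_f^v)$ to $\reim{q_1}(\opb{q_2}G\ltens K_f)$. This settles the first and third formulas.

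The remaining two follow by passing to right adjoints, using the adjunctions $(\opb{f},\roim{f})$, $(\reim{f},\epb{f})$ and $(\Phi_K,\Psi_K)$ recalled above. From $\reim{f}\simeq\Phi_{K_f}$ one gets $\epb{f}\simeq\Psi_{K_f}$, which by \eqref{eq:PsiK} reads $\roim{q_1}\rhom(K_f,\epb{q_2}G)$, the fourth formula. From $\opb{f}\simeq\Phi_{K_f^v}$ one gets $\roim{f}\simeq\Psi_{K_f^v}=\roim{p_1}\rhom(K_f^v,\epb{p_2}F)$, and transporting along $v$ once more — using $\rhom(\oim{v}K_f,-)\simeq\oim{v}\rhom(K_f,\opb{v}(-))$ together with $\opb{v}\epb{p_2}F\simeq\epb{(p_2\circ v)}F\simeq\epb{q_1}F$ — rewrites it as $\roim{q_2}\rhom(K_f,\epb{q_1}F)$, the second formula.

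I do not anticipate a genuine obstacle: as announced, the lemma is elementary once the six operations, the projection formula and the adjunction formalism are in place. The only point demanding care is the bookkeeping with the projections $q_1,q_2$ (and $p_1,p_2$) and with the transposition $v$, i.e.\ checking that the various base-change and transposition isomorphisms are used with the correct variances and compose coherently.
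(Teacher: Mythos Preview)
Your proof is correct and follows the same route as the paper: the first and third isomorphisms come from identifying $X$ with $\Gamma_f$ via the graph embedding (your $i_f$) and applying the projection formula, and the remaining two follow by adjunction. The paper's proof is more terse, but the argument is the same.
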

\begin{proof}
The first and third isomorphisms are obvious (identify $X$ with $\Gamma_f$). The two others follow by adjunction. 
\end{proof}
\begin{remark}

One may also define the  non-proper  convolution of kernels by the formula below, similar to~\eqref{eq:convker}
\eq\label{eq:npconvker}
&&K_{12}\npconv[2] K_{23}\eqdot \roim{q_{13}}(\opb{q_{12}}K_{12}\ltens\opb{q_{23}}K_{23}).
\eneq
However, one  should be aware that, in general, this operation is no more associative.

Let $f\cl X\to Y$ be as above and 
denote by $j\cl \Gamma_f\into X\times Y$ the embedding of the graph of $f$.  By remarking that the composition 
$q_1\circ j\cl \Gamma_f\to X$ is an isomorphism, we get:
\eqn
\roim{f}F&\simeq& \roim{q_2}\rhom(K_f,\epb{q_1}F) \simeq  \roim{q_2}\eim{j}\epb{j}\epb{q_1}F\\
&\simeq& \roim{q_2}\eim{j}\opb{j}\opb{q_1}F\simeq  \roim{q_2}(\opb{q_1}F\ltens\cor_{\Gamma_f})\simeq F\npconv K_f.
\eneqn
\end{remark}

\subsubsection*{Micro-support}
Now assume that $X$ is a real manifold of class $C^\infty$ and denote by $\pi_X\cl T^*X\to X$ its cotangent bundle. To $F\in\Derb(\cor_X)$, one associates its {\em micro-support} $\SSi(F)$  (also called {\em singular support}), a closed $\R^+$-conic subset of $T^*X$ and this set is co-isotropic (in a sense that we do not recall here). See~\cite{KS90}*{Th.~6.5.4}.

\subsubsection*{Subanalytic subsets}
From now on and unless otherwise specified, we work on  real analytic manifolds. 
However, almost all results extend to the case of subanalytic spaces 
for the definition of which we refer to~\cite{KS16}*{\S~2.4}.

We shall not review here the history of subanalytic geometry, which takes its origin in the work of 
 Lojasiewicz, simply mentioning the names of Gabrielov and Hironaka. References are made to~\cite{BM88}.

Let $X$ be a real analytic manifold. Denote by $\shs_X$ the family  of subanalytic subsets of $X$.
Then  $\shs_X$  is a Boolean algebra which contains
the  family of semi-analytic subsets (those locally defined by analytic inequalities) and is  closed
under  taking the closure and the interior. If $f\cl X\to Y$ is subanalytic,  $A\in\shs_X$, $B\in\shs_Y$, then 
$\opb{f}(B)\in\shs_X$ and if $f$ is proper on the closure of $A$,  then $f(A)\in\shs_Y$. 

Moreover, to be subanalytic in $X$ is a local property on $X$. More precisely, given $X=\bigcup_{a\in A}U_a$ an open covering, a subset $Z\subset X$ is subanalytic in $X$ if and only  if $Z\cap U_a$ is subanalytic in $U_a$ for all $a\in A$.

Note that if  $Z$ is a locally closed subanalytic subset of $X$, then there exist an open set $U$ and a closed subset $S$ both subanalytic in $X$ such that $Z=U\cap S$.
Indeed, set $Y=\ol Z\setminus Z$. Then $Y$ is closed since $Z$ is locally closed. Choose $S=\ol Z$  and $U=X\setminus Y$. 

A subanalytic stratification of $X$ 
is a locally finite   partition  
$X=\bigsqcup_{a\in A}X_a$ where each $X_a$ is a smooth locally closed real analytic submanifold of $X$ subanalytic in $X$, 
 and  for all $a,b\in A$, $X_a\cap \ol{X_b}\neq\varnothing$ implies $X_a\subset \ol{X_b}$.

\subsubsection*{Constructible sheaves}
A sheaf $F\in\Derb(\cor_X)$ is weakly $\R$-constructible if there exists a subanalytic stratification  $X=\bigsqcup_{a\in A}X_a$ such that for all $j\in\Z$, $H^j(F)\vert_{X_a}$ is locally constant. If moreover, these locally constant sheaves are  finitely generated  (recall that $\cor$ is Noetherian), then $F$ is $\R$-constructible. By the results of~\cite{KS90}*{Ch.~VIII}, $F$ is weakly $\R$-constructible if and only if $\SSi(F)$ is contained in a closed conic subanalytic isotropic subvariety  of $T^*X$ and this implies that  $\SSi(F)$ is equal to a 
closed conic subanalytic Lagrangian subvariety.

One denotes by $\Derb_\Rc(\cor_X)$ the full triangulated subcategory of $\Derb(\cor_X)$ consisting of $\R$-constructible sheaves.
The categories of constructible sheaves  are  closed under  the six Grothendieck operations with the exception of direct images which should be proper on the supports of the constructible sheaves.

\section{Constructible  sheaves up to infinity}\label{section:shvinfty}

\subsection{Subanalytic subsets up to infinity}\label{subsectFP}

 In order to define subanalytic subsets up to infinity, we introduce the category of b-analytic manifolds,  inspired by  (but rather different from)  that of bordered space of~\cite{DK16}. As mentioned in the introduction,  the notion of being subanalytic up to infinity is a particular case of that of definable set,  well known from the specialists 
(see~\cites{VD98, DM96}), and constructible sheaves in this framework have already been defined  in~\cites{Schu03, EP14}. 
However, our approach is direct and quite different since it is based on the notion of micro-support.

\begin{definition}\label{def:bspace}
The category of \emph{b-analytic manifolds} is the category defined as follows.
\banum
\item
An object $\fX$  is a  pair $(X,\bX)$ with $X\subset \bX$ an open embedding of real analytic manifolds such that $X$ is relatively compact and subanalytic in $\bX$.
\item
A morphism $f\cl \fX=(X,\bX) \to \fY=(Y,\bY)$ of b-analytic manifolds is a morphism of real analytic manifolds  $f\colon X\to Y$ such that
the graph $\Gamma_f$ of $f$ in $X\times Y$    is subanalytic in $\bX\times\bY$.
\item
The composition $(X,\bX) \to[f] (Y,\bY)\to[g](Z,\bZ)$ is given by $g\circ f\cl X\to Z$ and 
the identity $\id_{(X,\bX)}$ is given by $\id_{X}$  (see Lemma~\ref{lem:bordcom} below).
\eanum
If there is no risk of confusion, we shall often denote by $j_X\cl X\into\bX$ the open embedding.
\end{definition}

\begin{remark}
Instead of requiring $\bX$ to be a smooth real analytic manifold and $X$ relatively compact in it, one could ask $\bX$ to be a compact 
subanalytic space in the sense of~\cite{KS16}*{\S~2.4}. However, Definition~\ref{def:shvconstinfty} below should be modified
 by using uniquely properties (c) and (d) of Lemma~\ref{le:KS21}. 
 One could also define the notion of a b-subanalytic space. 
\end{remark}
Remark that in general, contrarily to the case of bordered spaces,  neither $(X,X)$ nor $(\bX,\bX)$ are b-analytic manifolds. However, if $X$ is compact, $(X,X)$ is a b-analytic manifold.

\begin{lemma}\label{lem:bordcom}
\banum
\item
The identity $\id_{(X,\bX)}$ is a morphism of b-analytic manifolds.
\item
Let $f\colon (X,\bX) \to (Y,\bY)$ and $g\colon (Y,\bY) \to (Z,\bZ)$ be morphisms of b-analytic manifolds. Then the composition $g\circ f$ is a morphism of b-analytic manifolds.
\eanum
\end{lemma}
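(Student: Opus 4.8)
The plan is to verify the two assertions of Lemma~\ref{lem:bordcom} directly from the definitions, the only real input being the stability properties of subanalytic sets recalled in the subsection on subanalytic subsets: the family $\shs_X$ is a Boolean algebra, stable by inverse images under subanalytic maps, and stable by direct images under maps that are proper on the closure of the set in question.

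For part (a), the graph of $\id_X$ in $X\times X$ is the diagonal $\Delta_X$. I would observe that $\Delta_X$ is the intersection of $\Delta_{\bX}$ with $X\times X$, that $\Delta_{\bX}$ is a closed analytic (hence subanalytic) submanifold of $\bX\times\bX$, and that $X\times X$ is subanalytic in $\bX\times\bX$ because $X$ is subanalytic in $\bX$ and $\shs$ is stable by products (a product of subanalytic sets is subanalytic, being the intersection of two inverse images under the projections). Since $\shs_{\bX\times\bX}$ is a Boolean algebra, the intersection $\Delta_X = \Delta_{\bX}\cap(X\times X)$ is subanalytic in $\bX\times\bX$, which is exactly the condition for $\id_X$ to be a morphism of b-analytic manifolds.

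For part (b), write $\Gamma_{g\circ f}$ as the image of a subanalytic set under a projection, being careful about properness. I would work inside $\bX\times\bY\times\bZ$, forming the set $W = (\Gamma_f\times\bZ)\cap(\bX\times\Gamma_g)$, where $\Gamma_f\times\bZ$ and $\bX\times\Gamma_g$ are subanalytic in $\bX\times\bY\times\bZ$ (inverse images of $\Gamma_f\subset\bX\times\bY$ and $\Gamma_g\subset\bY\times\bZ$ under the evident projections, which are analytic hence subanalytic). Then $W$ is subanalytic in $\bX\times\bY\times\bZ$, and $\Gamma_{g\circ f}$ is the image of $W$ under the projection $p\cl\bX\times\bY\times\bZ\to\bX\times\bZ$. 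The projection $p$ is not proper, but it is proper on the closure $\ol W$ in $\bX\times\bY\times\bZ$: indeed $\ol W$ is contained in the compact set $\ol X\times\ol Y\times\ol Z$ (using that $X,Y,Z$ are relatively compact in $\bX,\bY,\bZ$), and the restriction of $p$ to a closed subset of a compact set is proper. Hence $p(W)=\Gamma_{g\circ f}$ is subanalytic in $\bX\times\bZ$, which is the desired condition.

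The only point requiring a little care — and the one I expect to be the main (minor) obstacle — is the properness needed to push forward the subanalytic set under the projection: one must exploit precisely the relative compactness clause in the definition of a b-analytic manifold, observing that $\ol W\subset\ol X\times\ol Y\times\ol Z$ is compact, so that $p|_{\ol W}$ is proper and the direct-image stability of $\shs$ applies. Everything else is a routine bookkeeping with the Boolean-algebra axioms for $\shs$ and the behaviour of subanalyticity under products and analytic projections; I would not belabour those steps.
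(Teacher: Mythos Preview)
Your proof is correct and follows essentially the same route as the paper: for (a) you write $\Delta_X=\Delta_{\bX}\cap(X\times X)$, and for (b) you form the fiber product $W=(\Gamma_f\times\bZ)\cap(\bX\times\Gamma_g)$ in $\bX\times\bY\times\bZ$ and project to $\bX\times\bZ$, using relative compactness to guarantee properness of the projection on $\ol W$. The paper's proof is the same argument phrased in the $\times_2$ and $\conv[2]$ notation of~\eqref{diag:123B}, with the relative compactness step stated rather than spelled out.
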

\begin{proof}
(a) Since $X$ is subanalytic in $\bX$, $X\times X$ is  subanalytic in $\bX\times\bX$, and 
$\Delta_X=X\times X\cap\Delta_{\bX}$  is  subanalytic in $\bX\times\bX$. 

\spa
(b) By the hypothesis, $\Gamma_g$ is subanalytic and relatively compact in $\bY\times\bZ$ and $\Gamma_f$ is subanalytic and relatively compact in $\bX\times\bY$. It follows that 
 $\Gamma_f\times_{\bY}\Gamma_g$ is subanalytic  and relatively compact in $\bX\times\bY\times\bZ$. Therefore, its projection 
 $\Gamma_f\conv\Gamma_g$ is subanalytic in  $\bX\times\bZ$. Since $\Gamma_f\conv\Gamma_g=\Gamma_{g\circ f}$, the proof is complete.
 (Note that one could also have applied Proposition~\ref{pro:opersainfty} below.)
\end{proof}

\begin{definition}\label{def:sainfty}
Let $\fX=(X,\bX)$ be a b-analytic manifold and let 
 $Z$ be a  subset of $X$. We say that $Z$ is subanalytic  up to infinity
  if $Z$ is subanalytic in $\bX$. 
  We shall also say for short that  $Z$ is b-subanalytic.
\end{definition} 
Note the following remarks.
\begin{itemize}
\item
The property of being subanalytic up to infinity depends on the choice of $\fX$ and such a choice is supposed to have been made when using this terminology.
\item
Given $X$, there does not always exist $\fX$. As an example (of non-existence), choose $X=\N$, a real analytic manifold of dimension $0$. 
\item
The family of subsets  subanalytic  up to infinity inherits  all of the  properties of the family of subanalytic subsets with the exception that this property is no more local (but it is local for finite coverings). In particular, this family is  closed under  interior, closure, complement, finite unions and finite intersections
 and $X$ itself is subanalytic up to infinity (once $\fX$ exists).
 \end{itemize}

On a real analytic manifold $X$, the subanalytic topology and the site $\Xsa$ are defined in~\cite{KS01}. 
\begin{definition}
Let $\fX=(X,\bX)$ be a b-analytic manifold. 
\banum
\item
We shall denote by $\Op_{\Xsai}$ the category of open subsets of $X$ subanalytic up to infinity, the morphisms being the inclusions.
\item
We endow  $\Op_{\Xsai}$ with a Grothendieck topology as follows. A family $\{U_i\}_{i\in I}$ of objects of  $\Op_{\Xsai}$ 
is a covering of $U\in \Op_{\Xsai}$ if $U_i\subset U$ for all $i\in I$ and there exists $J\subset I$ with $J$ finite such that $U=\bigcup_{j\in J}U_j$.
\item
We denote by $\Xsai$ the site so obtained.
 \eanum
\end{definition}
Note that the category  $\Op_{\Xsai}$ is  closed under product of two elements (namely, the intersection of two open subsets) and admits a terminal object, namely  $X$. This makes the study of sheaves on $\Xsai$ particularly easy.

In the sequel, for $U\in\Op_{\Xsai}$, we shall denote by $U_\infty$ the b-analytic manifold $(U,\bX)$ where the embedding $j_U\cl U\into\bX$ is the composition of $j_X$ and the embedding $U\into X$.

\begin{proposition}\label{pro:opersainfty}
Let $\fXi=(X_i,\bXi)$ \lp$i=1,2,3$\rp\, be three b-analytic manifolds. 
\banum
\item
Setting $\bX_{12}=\bX_1\times\bX_2$, the pair $(X_{12},\bX_{12})$ is a b-analytic manifold. Moreover, if 
$S_1$ and $S_2$ are two  b-subanalytic  subsets of $X_1$ and $X_2$ respectively, then $S_1\times S_2$ is b-subanalytic  in $X_{12}$.
\item
Let $S_{1}$ and $S_2$ be two b-subanalytic subsets of $X_{12}$ and $X_{23}$ respectively, then $S_1\conv[2]S_2$ is 
b-subanalytic in $X_{13}$.
\item
In particular, let $f\cl \fX\to \fY$ be a morphism of b-analytic manifolds. If $Z\subset Y$ is  b-subanalytic, then $\opb{f}(Z)$ is b-subanalytic in $X$ 
and if $S\subset X$ is  b-subanalytic, then ${f}(S)$ is b-subanalytic    in $Y$.
\eanum
\end{proposition}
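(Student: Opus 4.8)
The plan is to reduce everything to the behaviour of subanalytic sets under the operations $\times$, $\opb{q}$, and proper projection, exactly as in the proof of Lemma~\ref{lem:bordcom}. For (a): since $X_i$ is subanalytic and relatively compact in $\bX_i$, the product $X_1\times X_2$ is subanalytic and relatively compact in $\bX_1\times\bX_2$; together with the fact that $X_{12}\subset\bX_{12}$ is an open embedding of real analytic manifolds, this shows $(X_{12},\bX_{12})$ is a b-analytic manifold. For the second assertion, if $S_i$ is subanalytic in $\bX_i$, then $S_1\times S_2$ is subanalytic in $\bX_1\times\bX_2=\bX_{12}$ (products of subanalytic sets are subanalytic), hence b-subanalytic in $X_{12}$.

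For (b), I would argue as follows. By hypothesis $S_1$ is subanalytic in $\bX_{12}=\bX_1\times\bX_2$ and $S_2$ is subanalytic in $\bX_{23}=\bX_2\times\bX_3$; moreover each is relatively compact there, since $S_1\subset X_{12}$ and $S_2\subset X_{23}$ which are relatively compact. Pulling back along the analytic projections $\bar q_{12}\cl\bX_{123}\to\bX_{12}$ and $\bar q_{23}\cl\bX_{123}\to\bX_{23}$ (where $\bX_{123}=\bX_1\times\bX_2\times\bX_3$), the sets $\opb{\bar q_{12}}S_1$ and $\opb{\bar q_{23}}S_2$ are subanalytic in $\bX_{123}$, so their intersection $S_1\times_2 S_2=\opb{\bar q_{12}}S_1\cap\opb{\bar q_{23}}S_2$ is subanalytic in $\bX_{123}$. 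It is also relatively compact, being contained in $X_{123}$. Now the projection $\bar q_{13}\cl\bX_{123}\to\bX_{13}$ is proper on the closure $\ol{S_1\times_2 S_2}$ (a closed subset of the compact set $\ol{X_{123}}$), so $S_1\conv[2]S_2=\bar q_{13}(S_1\times_2 S_2)$ is subanalytic in $\bX_{13}$, i.e. b-subanalytic in $X_{13}$. Note that the relative compactness built into the definition of b-analytic manifold is precisely what makes the image step legitimate: without it, projections of subanalytic sets need not be subanalytic.

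For (c), I would deduce both statements from (a) and (b) by the kernel formalism, mirroring Lemma~\ref{lem:bordcom}(b). Write $K_f=\Gamma_f\subset X\times Y$, which is b-subanalytic in $X_{12}$ by the morphism hypothesis. Given $Z\subset Y$ b-subanalytic, $\opb{f}(Z)=K_f\conv[2]Z$ (taking $X_3=\rmpt$, so $\bX_3=\rmpt$), hence b-subanalytic in $X$ by (b). Given $S\subset X$ b-subanalytic, $f(S)=S\conv[2]K_f$ (taking $X_1=\rmpt$), hence b-subanalytic in $Y$ by (b).

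The main obstacle is essentially bookkeeping rather than mathematics: one must consistently track the two properties ``subanalytic in the compactification'' and ``relatively compact'' through each operation, since the image-under-projection step fails for the first property alone. The one genuinely non-routine point to verify is that $\bar q_{13}$ is proper on the closure of $S_1\times_2 S_2$, which follows because that closure is a closed subset of the compact space $\ol{X_1}\times\ol{X_2}\times\ol{X_3}\subset\bX_{123}$ and the restriction of a continuous map to a compact set is proper.
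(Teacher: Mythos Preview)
Your proposal is correct and follows essentially the same approach as the paper: part~(a) is declared obvious there, part~(b) is argued exactly as you do (subanalytic and relatively compact in $\bX_{123}$, hence image under $q_{13}$ subanalytic in $\bX_{13}$), and part~(c) is deduced from~(b) via the identities $\opb{f}(Z)=\Gamma_f\conv[Y]Z$ and $f(S)=S\conv[X]\Gamma_f$, precisely your kernel reduction with $X_3=\rmpt$ and $X_1=\rmpt$ respectively. The only difference is that you spell out the properness verification and the role of relative compactness more explicitly than the paper does.
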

We shall denote by $(X\times Y)_\infty$ the b-analytic manifold  $(X\times Y,\bX\times\bY)$.
\begin{proof}
(a) is obvious.

\spa
(b) $S_1\times_{X_2}S_2$ is subanalytic and relatively compact in $\bX_{123}$. Therefore, its image by $q_{13}$ is 
 subanalytic and relatively compact in $\bX_{13}$. 
 
\spa
(c) By the hypothesis, $\Gamma_f$ is subanalytic  up to infinity in $\bX\times\bY$.  By (b), 
$\opb{f}(Z)=\Gamma_f\conv[Y]Z$ is subanalytic up to infinity in $X$ and $f(S)=S\conv[X]\Gamma_f$ is subanalytic up to infinity in $Y$.
 \end{proof}

\subsection{Constructible sheaves up to infinity}

 Constructible sheaves up to infinity can be regarded as a generalization of the notion of tame multiparameter persistence modules. In this section, we consider  b-analytic manifolds $\fX=(X,\bX)$ and $\fY=(Y,\bY)$.

\subsubsection*{Definitions}
Let  $F\in\Derb_\Rc(\cor_X)$. 
Recall that  the micro-support $\SSi(F)$ of $F$ is  a closed $\R^+$-conic  subanalytic Lagrangian subset of $T^*X$.

\begin{lemma}[{See~\cite{KS21}*{Th.2.2}}]\label{le:KS21}
Let  $F\in\Derb_\Rc(\cor_X)$. The following conditions are equivalent.
\banum
\item
The micro-support $\SSi(F)$ is subanalytic in $T^*\bX$.
\item
The micro-support $\SSi(F)$ is contained in a locally closed $\R^+$-conic  subanalytic isotropic subset  of $T^*\bX$. 
\item
$\eim{j_X}F\in\Derb_\Rc(\cor_{\bX})$.
\item
$\roim{j_X}F\in\Derb_\Rc(\cor_{\bX})$.
\eanum
\end{lemma}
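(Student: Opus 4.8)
The plan is to prove the cyclic chain of implications
$(a)\Rightarrow(b)\Rightarrow(c)\Rightarrow(d)\Rightarrow(a)$,
using the microlocal characterization of $\R$-constructibility recalled just before the statement (from~\cite{KS90}*{Ch.~VIII}): a sheaf $G\in\Derb(\cor_{\bX})$ is weakly $\R$-constructible iff $\SSi(G)$ is contained in a closed conic subanalytic isotropic subset of $T^*\bX$, and then $\SSi(G)$ is automatically a closed conic subanalytic Lagrangian subvariety.

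First, $(a)\Rightarrow(b)$ is essentially a formal observation: $\SSi(F)$ is already known to be a closed $\R^+$-conic subanalytic Lagrangian subset of $T^*X$, hence isotropic in $T^*X$; if in addition it is subanalytic in $T^*\bX$, then since $T^*X$ is open in $T^*\bX\vert_X$ and the isotropic condition is intrinsic to the symplectic structure on the open set $T^*X\subset T^*\bX$, it is a locally closed subanalytic $\R^+$-conic isotropic subset of $T^*\bX$ (locally closed because it is closed in the open subset $T^*X\times_X\bar X$... more precisely it is closed in $\pi_{\bX}^{-1}(X)$, which is open in $T^*\bX$). So (b) holds.

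The heart of the matter is $(b)\Rightarrow(c)$: if $\SSi(F)\subset\Lambda$ with $\Lambda$ locally closed conic subanalytic isotropic in $T^*\bX$, we must show $\eim{j_X}F$ is $\R$-constructible on $\bX$. The strategy is to apply the microlocal estimate for the behaviour of micro-supports under direct image. Since $j_X\cl X\into\bX$ is an open embedding, $\eim{j_X}F=F_X$ (extension by zero), and by the general bound on the micro-support of $\reim{}$ along a morphism (here the morphism is an open embedding, so this is the bound $\SSi(F_X)\subset\SSi(F)\cup(\SSi(F)\widehat{+}N^*(X))$, or more directly the estimate of~\cite{KS90} for $\SSi$ of the extension by zero across the subanalytic boundary $\partial X=\bar X\setminus X$), one gets that $\SSi(\eim{j_X}F)$ is contained in the union of $\Lambda$ and a conic subanalytic set built from $\Lambda$, the conormal bundle $N^*(\partial X)$, and their "sum" in the sense of the microlocal cut-off/propagation lemma. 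The key point is that $\partial X$ is subanalytic in $\bX$ (since $X$ is subanalytic and relatively compact), so $N^*(\partial X)$ and all the resulting sets are conic subanalytic; and the operations involved preserve the isotropic property (the sum of two isotropics along a transverse-enough configuration stays isotropic, after taking closure it becomes Lagrangian). Hence $\SSi(\eim{j_X}F)$ is contained in a closed conic subanalytic isotropic subset of $T^*\bX$, and the cited theorem gives $\eim{j_X}F\in\Derb_\Rc(\cor_{\bX})$; finite rank of the cohomology is inherited because $F$ already has finite rank cohomology on $X$ and $\eim{j_X}F$ vanishes outside $\bar X$. I expect the careful bookkeeping of this micro-support estimate — in particular checking that the "boundary contribution" is subanalytic and isotropic rather than merely coisotropic — to be the main obstacle; this is exactly where the relative compactness and subanalyticity of $X$ in $\bX$ are used.

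For $(c)\Rightarrow(d)$: one reduces $\roim{j_X}F$ to $\eim{j_X}F$ via duality. Using $\RD_{\bX}\roim{j_X}F\simeq\reim{j_X}\RD_X F$ (the commutation of $\reim{}$ with duality, valid here since $j_X$ is an open embedding), and the fact that duality $\RD_X$ preserves $\Derb_\Rc(\cor_X)$, we see: $\RD_X F$ is $\R$-constructible on $X$, and its micro-support satisfies $\SSi(\RD_X F)=\SSi(F)^a$ (the antipodal), which is again subanalytic in $T^*\bX$ under hypothesis $(a)$; but since we already have $(a)\Rightarrow(b)\Rightarrow(c)$ applied to $\RD_X F$ in place of $F$, we get $\reim{j_X}\RD_X F\in\Derb_\Rc(\cor_{\bX})$, hence $\roim{j_X}F=\RD_{\bX}\reim{j_X}\RD_X F\in\Derb_\Rc(\cor_{\bX})$ because $\RD_{\bX}$ preserves constructibility on $\bX$. (Alternatively, one runs the dual micro-support estimate directly for $\roim{}$ of an open embedding.) Finally $(d)\Rightarrow(a)$: if $\roim{j_X}F\in\Derb_\Rc(\cor_{\bX})$ then its micro-support is a closed conic subanalytic Lagrangian in $T^*\bX$, and since $\opb{j_X}\roim{j_X}F\simeq F$ and micro-support is compatible with restriction to the open set $X$, we get $\SSi(F)=\SSi(\roim{j_X}F)\cap T^*X$ (using that restriction of $\roim{j_X}$ to $X$ is $F$, and the microlocal restriction formula for open embeddings), so $\SSi(F)$ is the intersection of a subanalytic subset of $T^*\bX$ with the open subanalytic subset $T^*X$, hence subanalytic in $T^*\bX$ — wait, that only shows it is subanalytic in $T^*X$; to see it is subanalytic in $T^*\bX$ one uses instead that $\SSi(F)\subset\SSi(\roim{j_X}F)$ as subsets of $T^*\bX$ and that $\SSi(F)$ is closed in $\pi_{\bX}^{-1}(X)$, combined with $\R$-constructibility of $\roim{j_X}F$; more cleanly, $\SSi(F)$ equals the closure in $T^*\bX$ minus a lower-dimensional subanalytic piece, all subanalytic. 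This closes the cycle.
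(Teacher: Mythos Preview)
Your cyclic chain has a logical gap: in the step you label $(c)\Rightarrow(d)$, you explicitly invoke hypothesis~(a) (``$\SSi(\RD_X F)=\SSi(F)^a$ is again subanalytic in $T^*\bX$ under hypothesis~(a)''), so what you actually prove there is $(a)\Rightarrow(d)$, not $(c)\Rightarrow(d)$. This leaves~(c) orphaned in your cycle. The paper does not run a cycle at all: it proves $(c)\Rightarrow(a)$ and $(d)\Rightarrow(a)$ by the same one-line restriction argument, and it gets $(b)\Rightarrow(d)$ directly from $(b)\Rightarrow(c)$ via the isomorphism $\roim{j_X}F\simeq\rhom(\cor_X,\eim{j_X}F)$ together with the stability of $\R$-constructibility under $\rhom$.

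More seriously, your strategy for $(b)\Rightarrow(c)$ --- bounding $\SSi(\eim{j_X}F)$ by a ``$\hat{+}$'' of $\Lambda$ with a conormal to $\partial X$ --- is not the route the paper takes, and you yourself flag the obstacle. The boundary $\partial X$ need not be smooth, so ``$N^*(\partial X)$'' has no a~priori meaning; and even granting some conic subanalytic replacement, there is no off-the-shelf functorial estimate for $\SSi$ of extension by zero across a singular subanalytic boundary that directly yields an \emph{isotropic} bound. The paper bypasses this entirely: given $\Lambda$ conic subanalytic isotropic in $T^*\bX$, it invokes~\cite{KS90}*{Cor.~8.3.22} to produce a $\mu$-stratification $\bX=\bigsqcup_a Y_a$ with $\Lambda\subset\bigsqcup_a T^*_{Y_a}\bX$. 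Setting $X_a=X\cap Y_a$ gives a $\mu$-stratification of $X$ on whose strata $F$ is locally constant of finite rank (by~\cite{KS90}*{Prop.~8.4.1}). Since $\eim{j_X}F$ vanishes on $\bX\setminus X$ and restricts to $F$ on each $X_a$, it is locally constant of finite rank on every stratum, hence $\R$-constructible on $\bX$. No micro-support estimate for the pushforward is needed.

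Finally, your $(d)\Rightarrow(a)$ was already correct before you second-guessed it: since $X$ is open and subanalytic in $\bX$, the set $T^*X=\pi_{\bX}^{-1}(X)$ is subanalytic in $T^*\bX$, so $\SSi(F)=\SSi(\roim{j_X}F)\cap T^*X$ is the intersection of two subanalytic subsets of $T^*\bX$ and therefore subanalytic in $T^*\bX$. That is exactly the paper's one-line argument; your subsequent detour through closures and lower-dimensional pieces is unnecessary.
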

\begin{proof}
For the reader's convenience, we recall the proof of loc.\ cit,   a proof which  uses the notion of a $\mu$-stratification 
(see~\cite{KS90}*{Def.~8.3.19}). 
Note that in loc.\ cit. the statement was formulated slightly differently.

\spa
(a)$\Rightarrow$(b) is obvious.  

\spa
(c)$\Rightarrow$(a) and (d)$\Rightarrow$(a)  follow from the fact that $T^*X$  is subanalytic in $T^*\bX$. Indeed, set either $\Lambda=\SSi(\eim{j_X}F)$
or $\Lambda=\SSi(\roim{j_X}F)$. 
Then  $\Lambda$ is subanalytic in $T^*\bX$ and  $\SSi(F)=\Lambda\cap T^*X$ is still subanalytic in $T^*\bX$.

\spa
(b)$\Rightarrow$(c). Assume that  $\SSi(F)$ is contained in a locally closed $\R^+$-conic  subanalytic isotropic subset  $\Lambda$ of $T^*\bX$.
By~\cite{KS90}*{Cor.~8.3.22}, there exists a $\mu$-stratification $\bX=\bigsqcup_{a\in A}Y_a$ such that 
$\Lambda\subset\bigsqcup_{a\in A}T^*_{Y_a}\bX$. 

Set $X_a=X\cap Y_a$. Then $X=\bigsqcup_{a\in A}X_a$ is a  $\mu$-stratification  and one can 
apply loc.\ cit.\ Prop.~8.4.1. Hence, for each $a\in A$, $F\vert_{X_a}$ is locally constant of finite rank. 
Hence  $(\eim{j_X}F)\vert_{X_a}$ as well as $(\eim{j_X}F)_{\bX\setminus X}\simeq0$ is
locally constant of finite rank. 
Hence $\eim{j_X}F\in \Derb_\Rc(\cor_{\bX})$. 

\spa
(c)$\Rightarrow$(d).  Using the implication (b)$\Rightarrow$(c), we get that  $\reim{j_X}\cor_X$ belongs to $\Derb_\Rc(\cor_{\bX})$.
Set $G=\eim{j_X}F$. Then $\roim{j_X}F\simeq \rhom(\reim(j_X)\cor_X,G)$ belongs to 
$\Derb_\Rc(\cor_{\bX})$  by~\cite{KS90}*{Prop.~8.4.10}.
\end{proof}

\begin{definition}\label{def:shvconstinfty}
Let $F\in\Derb_\Rc(\cor_X)$. 
One says that $F$ is constructible up to infinity if it satisfies one of the equivalent conditions in Lemma~\ref{le:KS21}. We denote by 
$\Derb_\Rc(\cor_{\fX})$ the full triangulated subcategory of $\Derb_\Rc(\cor_X)$  consisting of sheaves constructible up to infinity.

We shall also say, for short, that $F$ is ``b-constructible'' instead of ``constructible up to infinity''.
\end{definition}
It follows that if $F\in\Derb_\Rc(\cor_{\bX})$, then $\opb{j_X}F\in \Derb_\Rc(\cor_{\fX})$.

\begin{example}
Piecewise linear sheaves (\PL sheaves) on a real vector space $\BBV$ are defined in~\cite{KS21a}*{Def.~2.3}.
Clearly, $\PL$-sheaves are constructible up to infinity. 
\end{example}

\subsubsection*{Operations}

\begin{proposition}\label{pro:opersainftyint}
Let $\fX$ and $\fY$ be two b-analytic manifolds. 
\bnum
\item
Let $F\in\Derb_\Rc(\cor_{\fX})$ and $G\in\Derb_\Rc(\cor_{\fY})$. Then $F\letens G\in\Derb_\Rc(\cor_{\fXY})$.
\item
Let $F_1$ and $F_2$ belong to $\Derb_\Rc(\cor_{\fX})$. Then $F_1\ltens F_2$ and $\rhom(F_1,F_2)$ belong to $\Derb_\Rc(\cor_{\fX})$.
In particular, the dual $\RD_XF$ of  $F\in\Derb_\Rc(\cor_{\fX})$ belongs to  $\Derb_\Rc(\cor_{\fX})$.
\enum
\end{proposition}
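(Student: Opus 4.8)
The plan is to reduce everything to the already-established closure of $\Derb_\Rc(\cor_{\bX})$ under the six operations (using properness of $\bX$) together with Lemma~\ref{le:KS21}, via the characterization ``$F\in\Derb_\Rc(\cor_{\fX})$ iff $\eim{j_X}F\in\Derb_\Rc(\cor_{\bX})$'' (equivalently $\roim{j_X}F\in\Derb_\Rc(\cor_{\bX})$). The guiding principle is that each operation on b-constructible sheaves should be ``pushed to the boundary'': compute it for the extensions to $\bX$, $\bY$, and then restrict back along $j_X$ or $j_{X\times Y}$, using that restriction along an open embedding sends $\Derb_\Rc(\cor_{\bX})$ into $\Derb_\Rc(\cor_{\fX})$ (the remark following Definition~\ref{def:shvconstinfty}).

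For (1): given $F\in\Derb_\Rc(\cor_{\fX})$ and $G\in\Derb_\Rc(\cor_{\fY})$, set $\bar F=\eim{j_X}F$ and $\bar G=\eim{j_Y}G$, which lie in $\Derb_\Rc(\cor_{\bX})$ and $\Derb_\Rc(\cor_{\bY})$ by Lemma~\ref{le:KS21}. Then $\bar F\letens\bar G\in\Derb_\Rc(\cor_{\bX\times\bY})$ by the stability of ordinary constructible sheaves under external tensor product (this is just $\ltens$ of the two pullbacks to the product, and $\Derb_\Rc$ is stable under $\opb{(\cdot)}$ and $\ltens$). Now one checks, via the Künneth-type compatibility of $\reim{}$ (or $\eim{}$) with external products, that $\eim{j_{X\times Y}}(F\letens G)\simeq \bar F\letens\bar G$, where $j_{X\times Y}\cl X\times Y\into\bX\times\bY$; since the left-hand side is then in $\Derb_\Rc(\cor_{\bX\times\bY})$, Lemma~\ref{le:KS21} gives $F\letens G\in\Derb_\Rc(\cor_{\fXY})$.

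For (2): stability of $\Derb_\Rc(\cor_{\fX})$ under $\ltens$ is the special case of (1) with $\fY=\fX$, followed by inverse image under the diagonal morphism $\delta\cl\fX\to\fXX$ of b-analytic manifolds (the diagonal $\Delta_X$ is b-subanalytic by Lemma~\ref{lem:bordcom}(a), so $\delta$ is a morphism), using that $\opb{f}$ preserves b-constructibility — a fact which either is proved just before this Proposition or follows again by pushing to the boundary and invoking that $\opb{(\cdot)}$ preserves ordinary constructibility. For $\rhom(F_1,F_2)$ I would argue more directly at the level of micro-supports, which is the cleanest route: by \cite{KS90}*{Prop.~8.4.12 / Cor.~6.4.5}, $\SSi\rhom(F_1,F_2)\subset\SSi(F_1)^a\hat+\SSi(F_2)$, and since both $\SSi(F_i)$ are contained in locally closed conic subanalytic isotropic subsets of $T^*\bX$ (condition (b) of Lemma~\ref{le:KS21}), so is their (antipodal) sum — this uses that the addition map $T^*\bX\times_{\bX}T^*\bX\to T^*\bX$ is subanalytic and proper-enough on the relevant pieces, hence preserves the class of sets arising as such images. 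Thus $\SSi\rhom(F_1,F_2)$ again satisfies (b), and $\rhom(F_1,F_2)\in\Derb_\Rc(\cor_{\fX})$; the statement about $\RD_X F=\rhom(F,\omega_X)$ is then the case $F_2=\omega_X$, noting $\omega_X\simeq\ori_X[\dim X]$ is locally constant hence trivially b-constructible.

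The main obstacle I anticipate is the compatibility isomorphism $\eim{j_{X\times Y}}(F\letens G)\simeq(\eim{j_X}F)\letens(\eim{j_Y}G)$ used in (1), i.e.\ checking that proper pushforward commutes with external tensor product in the necessary generality — this is standard (base change / projection formula for $\reim{}$) but must be invoked with care since $j_X, j_Y$ are open embeddings and the product embedding is not the product of the graphs in an obvious way; alternatively one can sidestep it by working directly with the microsupport estimate $\SSi(F\letens G)\subset\SSi(F)\times\SSi(G)$ inside $T^*\bX\times T^*\bY = T^*(\bX\times\bY)$ and checking this lies in a locally closed conic subanalytic isotropic subset, which is the approach I would actually carry out, as it parallels the $\rhom$ argument and avoids any functorial bookkeeping. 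For part (2), the only subtlety is ensuring the subanalyticity is controlled at the boundary when forming sums of microsupports in $T^*\bX$; this is where relative compactness of $X$ in $\bX$ (built into the definition of b-analytic manifold) is doing the work, guaranteeing the relevant maps are proper on closures so that images stay subanalytic.
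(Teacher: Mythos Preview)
Your proposal is correct and ends up following essentially the same route as the paper: the micro-support estimates in $T^*\bX$ (resp.\ $T^*(\bX\times\bY)$) together with condition~(b) of Lemma~\ref{le:KS21}. In particular, for (i) the paper uses exactly your ``sidestep'' argument, $\SSi(F\letens G)\subset\Lambda_1\times\Lambda_2$ with $\Lambda_i$ conic subanalytic isotropic in $T^*\bX_i$, and even mentions the K\"unneth identity $\eim{j_{X\times Y}}(F\etens G)\simeq\eim{j_X}F\etens\eim{j_Y}G$ as the alternative you first wrote down. For $\rhom$ in (ii) the paper does exactly what you do, citing \cite{KS90}*{Cor.~6.4.5} for the bound and \cite{KS90}*{Cor.~8.3.18~(i)} for the fact that $\Lambda_1\hat+\Lambda_2$ is again conic subanalytic isotropic in $T^*\bX$ --- this is the precise reference behind your informal ``addition map is proper-enough'' remark.

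The one place you diverge is $F_1\ltens F_2$ in (ii): you reduce to (i) via the diagonal $\delta\cl\fX\to\fX\times\fX$ and then invoke stability of b-constructibility under $\opb{\delta}$. This is valid, but two cautions. First, in the paper $\opb{f}$ preserving b-constructibility is Proposition~\ref{pro:opersainftyext}, which comes \emph{after} the present statement and whose proof uses it; so you must take your second option (``push to the boundary'') and argue directly that $\opb{\delta}H\simeq\opb{j_X}\opb{\bar\delta}\,\eim{j_{X\times X}}H$ with $\bar\delta\cl\bX\to\bX\times\bX$, rather than appeal to a not-yet-proved result. Second, the paper avoids this detour entirely: it treats $F_1\ltens F_2$ exactly like $\rhom$, via $\SSi(F_1\ltens F_2)\subset\Lambda_1\hat+\Lambda_2$ and \cite{KS90}*{Cor.~8.3.18~(i)}. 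That is shorter and keeps the two internal operations on the same footing.
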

\begin{proof}
 
All the statements follow from the similar ones for usual constructible sheaves and the isomorphisms:
\eqn
\eim{j_{X\times Y}}(F\letens G)&\simeq&\eim{j_X}F\letens\eim{j_Y}G,\\
\eim{j_X}(F_1\ltens F_2)&\simeq& \eim{j_X}F_1\ltens\eim{j_X}F_2,\\
\eim{j_X}\rhom(F_1,F_2)&\simeq& \rhom(\eim{j_X}F_1,\eim{j_X}F_2).
\eneqn
The proof of the first isomorphism is left as an exercise. The second one follows from the projection formula:
\eqn
\eim{j_X}F_1\ltens\eim{j_X}F_2&\simeq & \eim{j_X}(F_1\ltens\opb{j_X}\eim{j_X}F_2)\simeq \eim{j_X}(F_1\ltens F_2).
\eneqn
The third isomorphism follows by applying $\eim{j_X}$ to the isomorphism 
\eqn
\rhom(F_1,F_2)&\simeq& \epb{j_X}\rhom(\eim{j_X}F_1, \eim{j_X}F_2),
\eneqn
using the fact that, $j_X$ being an open immersion, $\epb{j_X}\circ\eim{j_X}\simeq\id$. 
\end{proof}

\begin{proposition}\label{pro:opersainftyext}
Let $f\cl \fX\to \fY$ be a morphism of b-analytic manifolds. 
\bnum
\item 
Let $G\in\Derb_\Rc(\cor_{\fY})$. Then 
$\opb{f}(G)$ and $\epb{f}G$ belong to $\Derb_\Rc(\cor_{\fX})$.
\item
Let  $F\in\Derb_\Rc(\cor_{\fX})$. Then $\reim{f}F$ and $\roim{f}F$ belong to $\Derb_\Rc(\cor_{\fY})$.
\enum
\end{proposition}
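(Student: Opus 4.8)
The plan is to reduce all four assertions to the single case of direct images along a projection, by means of the kernel calculus of Lemma~\ref{le:opbyker}. Write $(X\times Y)_\infty=(X\times Y,\bX\times\bY)$ and $(Y\times X)_\infty=(Y\times X,\bY\times\bX)$, set $K_f=\cor_{\Gamma_f}$ and $K_f^v=\oim{v}K_f=\cor_{\Gamma_f^v}$ with $\Gamma_f^v=v(\Gamma_f)$. By Lemma~\ref{le:opbyker} one has $\opb{f}G\simeq\reim{q_2}(\opb{q_1}G\ltens K_f^v)$ and $\epb{f}G\simeq\roim{q_1}\rhom(K_f,\epb{q_2}G)$, and symmetrically $\reim{f}F\simeq\reim{q_2}(\opb{q_1}F\ltens K_f)$ and $\roim{f}F\simeq\roim{q_2}\rhom(K_f,\epb{q_1}F)$, where the $q_i$ are the two projections of $Y\times X$ in the first two formulas and of $X\times Y$ in the last two. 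Since $q_i$ is a projection one has $\opb{q_1}G\simeq G\letens\cor_X$ and $\opb{q_1}F\simeq F\letens\cor_Y$, and since it is moreover a submersion, $\epb{q_2}G\simeq\omega_X\letens G$ and $\epb{q_1}F\simeq F\letens\omega_Y$; here $\cor_X,\cor_Y,\omega_X,\omega_Y$ are b-constructible, being the inverse images by $j_X$ or $j_Y$ of the $\R$-constructible sheaves $\cor_{\bX},\cor_{\bY},\omega_{\bX},\omega_{\bY}$. As $\letens$, $\ltens$ and $\rhom$ already preserve b-constructibility by Proposition~\ref{pro:opersainftyint}, it is enough to prove the two statements (a) and (b) below.

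\emph{(a) $K_f\in\Derb_\Rc(\cor_{(X\times Y)_\infty})$ and $K_f^v\in\Derb_\Rc(\cor_{(Y\times X)_\infty})$.} Since $f$ is a morphism of b-analytic manifolds, $\Gamma_f$ is subanalytic in $\bX\times\bY$; being also closed in the open subset $X\times Y$, it is locally closed in $\bX\times\bY$. Hence $\eim{j_{X\times Y}}K_f\simeq\cor_{(\bX\times\bY)\Gamma_f}$ is $\R$-constructible on $\bX\times\bY$, and condition~(c) of Lemma~\ref{le:KS21} gives the claim for $K_f$; transporting by the analytic isomorphism $\bX\times\bY\isoto\bY\times\bX$ gives it for $K_f^v$.

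\emph{(b) If $q\cl X\times Y\to Y$ (or $q\cl X\times Y\to X$) is a projection and $H\in\Derb_\Rc(\cor_{(X\times Y)_\infty})$, then $\reim{q}H$ and $\roim{q}H$ are b-constructible.} Let $\ol q\cl\bX\times\bY\to\bY$ be the ambient projection, so that $\ol q\circ j_{X\times Y}=j_Y\circ q$. Since $j_Y$ and $j_{X\times Y}$ are open embeddings, $\eim{j_Y}\reim{q}H\simeq\reim{\ol q}(\eim{j_{X\times Y}}H)$, and by functoriality of $\roim{}$, $\roim{j_Y}\roim{q}H\simeq\roim{\ol q}(\roim{j_{X\times Y}}H)$. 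By Lemma~\ref{le:KS21}, $\eim{j_{X\times Y}}H$ and $\roim{j_{X\times Y}}H$ are $\R$-constructible on $\bX\times\bY$, and both are supported in the compact set $\ol X\times\ol Y$, on which $\ol q$ is proper. Hence $\reim{\ol q}(\eim{j_{X\times Y}}H)\simeq\roim{\ol q}(\eim{j_{X\times Y}}H)$ and $\roim{\ol q}(\roim{j_{X\times Y}}H)$ are $\R$-constructible on $\bY$ by the classical stability of $\R$-constructibility under (proper) direct images. Applying $\opb{j_Y}$ shows that $\reim{q}H$ and $\roim{q}H$ are $\R$-constructible on $Y$, and then the two displayed isomorphisms together with conditions~(c) and~(d) of Lemma~\ref{le:KS21} show that they are b-constructible. (Alternatively, once the $\reim{q}$ case is known, the $\roim{q}$ case follows from it by applying $\RD_Y$ and using $\RD_Y\circ\reim{q}\simeq\roim{q}\circ\RD_{X\times Y}$ together with Proposition~\ref{pro:opersainftyint}.) The projection onto $X$ is symmetric.

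It then remains to combine (a), (b) and Proposition~\ref{pro:opersainftyint} in the four formulas above, which yields statements (i) and (ii). I expect step (b) to be the only delicate point: one has to justify carefully the commutations $\eim{j_Y}\reim{q}\simeq\reim{\ol q}\reim{j_{X\times Y}}$ and $\roim{j_Y}\roim{q}\simeq\roim{\ol q}\roim{j_{X\times Y}}$, and then exploit that, although $\ol q$ itself is not proper, its restriction to $\ol X\times\bY$ is — which is what makes $\reim{\ol q}$ and $\roim{\ol q}$ agree on, and preserve the $\R$-constructibility of, sheaves supported in $\ol X\times\ol Y$.
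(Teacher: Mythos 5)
Your proof is correct and follows essentially the same route as the paper: reduce via Lemma~\ref{le:opbyker} and Proposition~\ref{pro:opersainftyint} to the b-constructibility of the kernel $K_f=\cor_{\Gamma_f}$ and to (proper and non-proper) direct images along projections, then treat the projection case by factoring through the ambient projection $\bX\times\bY\to\bY$ and invoking Lemma~\ref{le:KS21} together with properness on the compact set $\ol X\times\ol Y$. The only difference is that you spell out details the paper leaves implicit (local closedness of $\Gamma_f$ in $\bX\times\bY$, the support and properness argument), which is fine.
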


\begin{proof}
Let $K_f=\cor_{\Gamma_f}$. Then  $K_f\in\Derb_\Rc(\cor_{\fXY})$.
By Proposition~\ref{pro:opersainftyint} and Lemma~\ref{le:opbyker}, we are reduced to prove that

\spa
(a) if $H\in\Derb_\Rc(\cor_{\fXY})$, then 
$\reim{q_1}H$ and $\roim{q_1}H$ belong to $\Derb_\Rc(\cor_{\fX})$,

\spa
(b) if $F\in \Derb_\Rc(\cor_{\fX})$, then $\opb{q_1}F$ and $\epb{q_1}F$ belong to $\Derb_\Rc(\cor_{\fXY})$.

\spa
The assertion (b) follows from Proposition~\ref{pro:opersainftyint} since  $\opb{q_1}F\simeq F\etens\cor_Y$ and  $\epb{q_1}F\simeq F\etens\omega_Y$. To prove (a), denote by ${\widehat {q_1}}$ the projection $\bXY\to \bX$. 
Then $\reim{q_1}H\simeq \opb{j_X}\reim{{\widehat {q_1}}}\reim{j_{X\times Y}}H$ and similarly 
$\roim{q_1}H\simeq \opb{j_X}\roim{{\widehat {q_1}}}\roim{j_{X\times Y}}H$.
\end{proof}
\begin{remark}
We see in Proposition~\ref{pro:opersainftyext} an important difference between constructible sheaves and constructible sheaves up to infinity.
Indeed, for usual constructible sheaves, the (proper or non proper) direct image is no more constructible in general.
\end{remark}

\begin{corollary}\label{cor:opersainftyext}
Let $f\cl \fX\to \fY$ be a morphism of b-analytic manifolds and let $F\in\Derb_\Rc(\cor_{\fX})$ and $G\in\Derb_\Rc(\cor_{\fY})$. 
Then $\roim{f}F\simeq\RD_Y\reim{f}\RD_XF$ and $\epb{f}G\simeq \RD_X\opb{f}\RD_YG$.
\end{corollary}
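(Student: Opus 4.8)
The plan is to deduce both isomorphisms from the hypothesis-free duality formulas of the six-operation formalism applied to the continuous map $f\cl X\to Y$ underlying the given morphism of b-analytic manifolds, combined with biduality for $\R$-constructible sheaves. The b-analytic structure plays no role in the two isomorphisms themselves, which live in $\Derb(\cor_Y)$, resp.\ $\Derb(\cor_X)$; it enters only through Propositions~\ref{pro:opersainftyint} and~\ref{pro:opersainftyext}, which guarantee that $\RD_XF$, $\reim{f}\RD_XF$ and $\roim{f}F$ (resp.\ $\RD_YG$, $\opb{f}\RD_YG$ and $\epb{f}G$) lie in $\Derb_\Rc(\cor_{\fY})$ (resp.\ $\Derb_\Rc(\cor_{\fX})$), so that the statement is meaningful inside the category of b-constructible sheaves.

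First I would record the two general identities, valid for any morphism $f\cl X\to Y$ of good spaces with no properness or constructibility assumption and any $E\in\Derb(\cor_X)$, $E'\in\Derb(\cor_Y)$:
\[
\RD_Y\reim{f}E\simeq\roim{f}\RD_XE,
\]
\[
\epb{f}\RD_YE'\simeq\RD_X\opb{f}E'.
\]
The first is obtained from the adjunction formula $\roim{f}\rhom(E,\epb{f}\scbul)\simeq\rhom(\reim{f}E,\scbul)$ of~\cite{KS90}*{\S~3.1} by specializing the second argument to $\omega_Y$ and using $\epb{f}\omega_Y\simeq\omega_X$ (functoriality of the exceptional inverse image along $a_X=a_Y\circ f$). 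The second follows, via the Yoneda lemma, from the projection formula $\reim{f}(F\ltens\opb{f}E')\simeq\reim{f}F\ltens E'$ together with the adjunctions $(\opb{f},\roim{f})$, $(\reim{f},\epb{f})$, $(\ltens,\rhom)$: for every $F\in\Derb(\cor_X)$,
\begin{align*}
\Hom(F,\epb{f}\RD_Y E')&\simeq\Hom(\reim{f}F\ltens E',\omega_Y)\simeq\Hom(\reim{f}(F\ltens\opb{f}E'),\omega_Y)\\
&\simeq\Hom(F\ltens\opb{f}E',\omega_X)\simeq\Hom(F,\RD_X\opb{f}E').
\end{align*}
(Alternatively, the second identity is the formula $\epb{f}\rhom(E',\scbul)\simeq\rhom(\opb{f}E',\epb{f}\scbul)$ evaluated at $\scbul=\omega_Y$.)

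To conclude, I would specialize the first identity to $E=\RD_XF$ and the second to $E'=\RD_YG$. Since $F$ and $G$ are $\R$-constructible, biduality gives $\RD_X\RD_XF\simeq F$ and $\RD_Y\RD_YG\simeq G$ (\cite{KS90}*{Ch.~VIII}), hence
\[
\RD_Y\reim{f}\RD_XF\simeq\roim{f}\RD_X\RD_XF\simeq\roim{f}F,
\]
\[
\epb{f}G\simeq\epb{f}\RD_Y\RD_YG\simeq\RD_X\opb{f}\RD_YG,
\]
which are exactly the two asserted isomorphisms. I do not anticipate any genuine obstacle here: the entire content is classical. The only points demanding care are that the two duality formulas above are really free of a properness hypothesis on $f$ — so that no superfluous assumption from the classical constructible setting is silently imported — and that biduality is applied only to $\R$-constructible objects, where it is available.
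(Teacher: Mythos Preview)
Your proposal is correct and follows essentially the same route as the paper: both rely on the general duality exchange identities $\RD_Y\reim{f}\simeq\roim{f}\RD_X$ and $\epb{f}\RD_Y\simeq\RD_X\opb{f}$ (valid without any properness hypothesis) together with biduality for $\R$-constructible sheaves. The paper simply cites~\cite{KS90}*{Exe.~VIII.3} after recording that the intermediate objects $\reim{f}\RD_XF$ and $\opb{f}\RD_YG$ are $\R$-constructible, whereas you spell out the adjunction/projection-formula derivation and observe (correctly) that only biduality on $F$ and $G$ themselves is actually needed for the isomorphisms.
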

\begin{proof}
(i) Both $\RD_XF$ and $\reim{f}\RD_XF$ are $\R$-constructible. Then apply~\cite{KS90}*{Exe.~VIII.3}.

\spa
(ii) Similarly, both $\RD_YG$ and $\opb{f}\RD_YG$  are $\R$-constructible. Then apply loc.\ cit.
\end{proof}
Consider b-analytic manifolds  $\fXi=(X_i,\bXi)$, ($i=1,2,3$), and kernels $K_{ij}\in\Derb_\Rc(\cor_{\fXij})$, $i=1,2$, $j=i+1$.
We have already defined in~\eqref{eq:convker} the   convolution  of kernels $K_{12}\cconv[2] K_{23}$. 

Applying  Propositions~\ref{pro:opersainftyint} and~\ref{pro:opersainftyext}, we get:
\begin{corollary}\label{cor:assocnpstar} 
In the preceding situation, $K_{12}\conv[2] K_{23}$ belongs to $\Derb_\Rc(\cor_{X_{13\infty}})$. 
\end{corollary}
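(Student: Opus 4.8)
The plan is to unwind the definition~\eqref{eq:convker}, namely $K_{12}\cconv[2]K_{23}=\reim{q_{13}}(\opb{q_{12}}K_{12}\ltens\opb{q_{23}}K_{23})$, and to apply Propositions~\ref{pro:opersainftyint} and~\ref{pro:opersainftyext} one operation at a time, in the same spirit as the proof of Proposition~\ref{pro:opersainftyext}. First I would record the b-analytic structures on the intermediate spaces: by Proposition~\ref{pro:opersainfty}\,(a) applied twice, $\fX_{123}\eqdot(X_{123},\bX_{123})$ is a b-analytic manifold (with $\bX_{123}=\bX_1\times\bX_2\times\bX_3$), and likewise $\fX_{12}$, $\fX_{23}$ and $\fX_{13}$; note that $\fX_{13}$ is precisely $X_{13\infty}$. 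Then I would check that the three projections $q_{12},q_{23},q_{13}$ emanating from $X_{123}$ (see~\eqref{diag:123}) are morphisms of b-analytic manifolds.

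This last verification is the only point that needs an argument, and it is routine. For instance the graph $\Gamma_{q_{13}}\subset X_{123}\times X_{13}$ is the intersection, inside $\bX_{123}\times\bX_{13}$, of the subanalytic and relatively compact set $X_{123}\times X_{13}$ with the closed semi-analytic set defined by the equations $x_1=x_1'$ and $x_3=x_3'$ (a coordinate permutation of $\Delta_{\bX_1}\times\Delta_{\bX_3}$); hence $\Gamma_{q_{13}}$ is subanalytic in $\bX_{123}\times\bX_{13}$, exactly as $\Delta_X$ was seen to be subanalytic in $\bX\times\bX$ in the proof of Lemma~\ref{lem:bordcom}\,(a). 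The same reasoning applies to $q_{12}$ and $q_{23}$.

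With this in hand the conclusion is a short chain of implications. Since $K_{12}\in\Derb_\Rc(\cor_{\fX_{12}})$ and $K_{23}\in\Derb_\Rc(\cor_{\fX_{23}})$, Proposition~\ref{pro:opersainftyext}\,(i) gives $\opb{q_{12}}K_{12}\in\Derb_\Rc(\cor_{\fX_{123}})$ and $\opb{q_{23}}K_{23}\in\Derb_\Rc(\cor_{\fX_{123}})$; then Proposition~\ref{pro:opersainftyint}\,(ii) gives $\opb{q_{12}}K_{12}\ltens\opb{q_{23}}K_{23}\in\Derb_\Rc(\cor_{\fX_{123}})$; and finally Proposition~\ref{pro:opersainftyext}\,(ii) gives $\reim{q_{13}}(\opb{q_{12}}K_{12}\ltens\opb{q_{23}}K_{23})\in\Derb_\Rc(\cor_{\fX_{13}})=\Derb_\Rc(\cor_{X_{13\infty}})$, which is the assertion. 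I do not expect any genuine obstacle here; the only thing to be careful about is equipping $X_{123}$ with its correct b-structure so that $q_{12}$, $q_{23}$ and $q_{13}$ are all morphisms of b-analytic manifolds at once---once that is set up, the statement follows formally from the two propositions already established.
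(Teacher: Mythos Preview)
Your proposal is correct and follows exactly the route the paper takes: the corollary is stated as an immediate consequence of Propositions~\ref{pro:opersainftyint} and~\ref{pro:opersainftyext}, and your argument simply unpacks this, adding the (implicit) verification that the projections $q_{12},q_{23},q_{13}$ are morphisms of b-analytic manifolds. There is nothing to change.
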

Recall that the  convolution of kernels is associative (see~\eqref{eq:assockernels}). 

\subsubsection*{Base change formula and projection formula}
Consider two morphisms $f\cl \fX\to \fZ$ and $g\cl \fY\to \fZ$ of b-analytic manifolds and 
consider\footnote{In~\cite{Sc20}*{v1, v2} it was made reference to the notion of a Cartesian square in the category of b-analytic manifolds, a notion which should have been defined more precisely and that we avoid here.} 
 a Cartesian square of topological spaces
\eq\label{diag:cart1}
&&\ba{l}\xymatrix{
W\ar[r]^-{f'}\ar[d]_-{g'}&Y\ar[d]_-{g}\\
X\ar[r]_f&Z.
}\ea\eneq
Recall that the square is Cartesian means that $W$ is  isomorphic  to the space
$\{(x,y)\in X\times Y;f(x)= g(y)\}$.
We consider $W$ as a closed subanalytic subset of $X\times Y$.

The classical  base change formula for sheaves (see for example~\cite{KS90}*{Prop.~2.6.7})  together with Proposition~\ref{pro:opersainftyext} gives:

\begin{proposition}\label{pro:basechinftyshv}
Consider the  Cartesian  square~\eqref{diag:cart1} and let $F\in\Derb(\cor_{\fX})$. Then
\eq\label{eq:basechform}
&&\opb{g}\reim{f}F \simeq \reim{f'}\opb{g'}F  \mbox{ in }\Derb(\cor_{\fY}).
\eneq
\end{proposition}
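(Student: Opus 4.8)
The plan is to deduce this from the classical base change formula for sheaves together with Proposition~\ref{pro:opersainftyext}, which guarantees that all the functors involved preserve $\R$-constructibility up to infinity. First I would recall that the classical base change theorem (see~\cite{KS90}*{Prop.~2.6.7}) already gives the isomorphism $\opb{g}\reim{f}F\simeq\reim{f'}\opb{g'}F$ in $\Derb(\cor_Y)$ — that is, as an isomorphism of sheaves on the underlying manifold $Y$, with no constructibility hypothesis. So the only thing left to check is that both sides are objects of the subcategory $\Derb_\Rc(\cor_{\fY})$; but this subcategory is a \emph{full} subcategory of $\Derb_\Rc(\cor_Y)$ (hence of $\Derb(\cor_Y)$), so an isomorphism between its objects taking place in the ambient category automatically lives in the subcategory. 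Thus the statement reduces to the stability assertions.

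The key step is therefore to verify that the square~\eqref{diag:cart1}, regarded in the b-analytic setting, is built from morphisms of b-analytic manifolds, so that Proposition~\ref{pro:opersainftyext} applies. Since $f\cl\fX\to\fZ$ and $g\cl\fY\to\fZ$ are morphisms of b-analytic manifolds, their graphs $\Gamma_f\subset\bX\times\bZ$ and $\Gamma_g\subset\bY\times\bZ$ are subanalytic and relatively compact. The fiber product $W$ sits inside $X\times Y$ as $\Gamma_f\times_{\bZ}\Gamma_g$ (identifying via the two projections), hence is subanalytic and relatively compact in $\bX\times\bY$ by the same reasoning as in Lemma~\ref{lem:bordcom}(b) and Proposition~\ref{pro:opersainftyint}(a),(b). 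So $\fW\eqdot(W,\overline{W})$ — or more conveniently $(W,\bX\times\bY)$ — is a b-analytic manifold, and the two projections $f'\cl W\to Y$, $g'\cl W\to X$ have subanalytic relatively compact graphs (intersections of $W\times\bY$, resp.\ $W\times\bX$, with appropriate diagonals), hence are morphisms of b-analytic manifolds. Given $F\in\Derb_\Rc(\cor_{\fX})$, Proposition~\ref{pro:opersainftyext}(ii) gives $\reim{f}F\in\Derb_\Rc(\cor_{\fZ})$ and then Proposition~\ref{pro:opersainftyext}(i) gives $\opb{g}\reim{f}F\in\Derb_\Rc(\cor_{\fY})$; symmetrically, $\opb{g'}F\in\Derb_\Rc(\cor_{\fW})$ by Proposition~\ref{pro:opersainftyext}(i) and then $\reim{f'}\opb{g'}F\in\Derb_\Rc(\cor_{\fY})$ by Proposition~\ref{pro:opersainftyext}(ii). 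Both sides thus belong to $\Derb_\Rc(\cor_{\fY})$, and combined with the classical isomorphism this proves~\eqref{eq:basechform}.

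The main obstacle, and the only genuinely delicate point, is the bookkeeping around $W$: one must make sure that the "Cartesian square of topological spaces" in~\eqref{diag:cart1} can indeed be upgraded to a square of morphisms of b-analytic manifolds, i.e.\ that $W$ carries a b-analytic structure compatible with everything, without invoking a notion of Cartesian square internal to the category of b-analytic manifolds (which the footnote explicitly avoids). This is handled by the argument above: one simply \emph{defines} $\fW$ by exhibiting $W$ as the subanalytic relatively compact subset $\Gamma_f\times_{\bZ}\Gamma_g$ of $\bX\times\bY$, and checks directly that $f'$, $g'$ are b-morphisms, rather than characterizing $\fW$ by a universal property. Everything else is a formal consequence of the stability properties already established and the classical base change theorem. (A completely parallel statement and proof, using $\rhom$, the projection formula~\cite{KS90}*{Prop.~2.6.6}, Proposition~\ref{pro:opersainftyint}, and Proposition~\ref{pro:opersainftyext}, handles the projection formula analogously.)
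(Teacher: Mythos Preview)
Your overall strategy matches the paper's: invoke the classical base change isomorphism in $\Derb(\cor_Y)$ and then verify that it lives in the full subcategory $\Derb_\Rc(\cor_{\fY})$. The handling of the left-hand side via Proposition~\ref{pro:opersainftyext} (applied to the genuine b-morphisms $f$ and $g$) is correct.

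The gap is in your treatment of $W$. You claim that $\fW=(W,\bX\times\bY)$ is a b-analytic manifold, but Definition~\ref{def:bspace} requires an \emph{open} embedding of smooth real analytic manifolds. The fiber product $W=X\times_Z Y$ is in general neither smooth nor open in $\bX\times\bY$; the paper itself says only that ``we consider $W$ as a closed subanalytic subset of $X\times Y$'', and the footnote attached to~\eqref{diag:cart1} explicitly warns that treating this square as Cartesian in the category of b-analytic manifolds is something the paper \emph{avoids}. Consequently Proposition~\ref{pro:opersainftyext} cannot be applied to $g'$ and $f'$ as you do, and your argument that $\reim{f'}\opb{g'}F\in\Derb_\Rc(\cor_{\fY})$ is not justified.

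The fix is exactly the trick the paper uses in the remark following the proposition: you do not need to verify both sides separately. Once you know the left-hand side $\opb{g}\reim{f}F$ lies in $\Derb_\Rc(\cor_{\fY})$ (which you have correctly shown), the classical isomorphism forces the right-hand side to lie there as well, since $\Derb_\Rc(\cor_{\fY})$ is full in $\Derb(\cor_Y)$ and closed under isomorphism. So simply delete the attempt to upgrade $W$ to a b-analytic manifold and conclude directly from the isomorphism.
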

Remark that the left hand-side of this isomorphism belongs to $ \Derb(\cor_{\fY})$  by the preceding results and this implies 
that the same is true for the right hand-side.

Similarly, the classical projection formula   together with Proposition~\ref{pro:opersainftyext}  gives:

\begin{proposition}\label{pro:projforminftyshv}
Let $f\cl \fX\to \fY$ be a morphism of b-analytic manifolds, let  $F\in\Derb(\cor_{\fX})$ and $G\in\Derb(\cor_{\fY})$. Then
\eq\label{eq:projforminfty}
&&\reim{f}(F\ltens\opb{f}G)\simeq \reim{f} F\ltens\ G \mbox{ in }\Derb(\cor_{\fY}).
\eneq
\end{proposition}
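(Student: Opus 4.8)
The plan is to follow the pattern of the proof of Proposition~\ref{pro:basechinftyshv}: first establish the isomorphism in the ambient derived category $\Derb(\cor_Y)$ by invoking the classical projection formula, and then check, by means of Propositions~\ref{pro:opersainftyint} and~\ref{pro:opersainftyext}, that both members actually belong to the full triangulated subcategory $\Derb_\Rc(\cor_{\fY})$.

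First I would recall the classical projection formula for sheaves (see~\cite{KS90}): for a continuous map $f\cl X\to Y$ of good topological spaces and any $F\in\Derb(\cor_X)$, $G\in\Derb(\cor_Y)$, one has a functorial isomorphism $\reim{f}(F\ltens\opb{f}G)\isoto\reim{f}F\ltens G$ in $\Derb(\cor_Y)$, with no properness hypothesis on $f$. Applied to the data of the statement, this yields the asserted isomorphism in $\Derb(\cor_Y)$.

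It then remains to verify that both sides are b-constructible. For the left-hand side: $\opb{f}G\in\Derb_\Rc(\cor_{\fX})$ by Proposition~\ref{pro:opersainftyext}~(i), hence $F\ltens\opb{f}G\in\Derb_\Rc(\cor_{\fX})$ by Proposition~\ref{pro:opersainftyint}~(ii), hence $\reim{f}(F\ltens\opb{f}G)\in\Derb_\Rc(\cor_{\fY})$ by Proposition~\ref{pro:opersainftyext}~(ii). For the right-hand side: $\reim{f}F\in\Derb_\Rc(\cor_{\fY})$ by Proposition~\ref{pro:opersainftyext}~(ii), and then $\reim{f}F\ltens G\in\Derb_\Rc(\cor_{\fY})$ by Proposition~\ref{pro:opersainftyint}~(ii). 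Since $\Derb_\Rc(\cor_{\fY})$ is full in $\Derb(\cor_Y)$, the isomorphism above is an isomorphism in $\Derb_\Rc(\cor_{\fY})$, which is the category in the statement.

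I do not expect any real obstacle. The single point worth keeping in mind is that the classical projection formula already holds without any properness assumption, so there is nothing to repair ``at infinity''; all of the infinity content is packaged into the stability statements of Proposition~\ref{pro:opersainftyext}. Alternatively, one could reduce directly to the classical projection formula on the compactifications $\bX$ and $\bY$ by applying $\opb{j_X}$ and $\reim{j_{X\times Y}}$ as in the proof of Proposition~\ref{pro:opersainftyext}, but this is not needed.
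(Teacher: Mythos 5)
Your proposal is correct and matches the paper's own (very brief) argument: the paper likewise derives the isomorphism from the classical projection formula and relies on Propositions~\ref{pro:opersainftyint} and~\ref{pro:opersainftyext} to see that both sides lie in $\Derb_\Rc(\cor_{\fY})$. You have merely spelled out the membership checks that the paper leaves implicit.
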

\

\subsection{Convolution and $\gamma$-topology}\label{sect:conv}

In this subsection, we consider a real $n$-dimensional vector space $\BBV$. 
We consider its projective compactification $\BBP=(\BBV\oplus\R\setminus\{0\})/\R^\times$. The pair  $(\BBV,\BBP)$ is a b-analytic manifold and we set
\eq\label{eq:Vi}
&&\Vi=(\BBV,\BBP).
\eneq
If there is no risk of confusion, we simply write $\BBV$ instead of $\Vi$. 

\subsubsection*{Convolution}
We denote  by $s$ the addition map.
\eqn
&&s\cl \BBV\times\BBV\to\BBV,\quad (x,y)\mapsto x+y.
\eneqn
Clearly, $s$  is a morphism of b-analytic manifolds.

We  define the convolution and the non-proper convolution as follows. For 
$F,G\in\Derb_\Rc(\cor_{\Vi})$, we set
\eqn
&&F\star G\eqdot\reim{s}(F\etens G),\quad F\npstar G\eqdot\roim{s}(F\etens G). 
\eneqn
By  Propositions~\ref{pro:opersainftyext} and~\ref{pro:opersainftyint}, both $F\star G$ and $ F\npstar G$ belong to $\Derb_\Rc(\Vi)$. 
One checks easily that both convolution operations are commutative and that usual (proper)  convolution is associative. Note that, denoting by $\BBV_i$ ($i=1,2$) two copies of $\BBV$ one has $F_1\star F_2\simeq (F_1\etens F_2)\conv[12]\cor_{\Gamma_s}$ where $\Gamma_s$ is the graph of $s$ in $\BBV_{12}\times\BBV$. 

\begin{proposition}\label{pro:assocnpstar}
Let $F_i\in\Derb(\Vi)$, $i=1,2,3$. Then
\eqn
&&F_1\npstar F_2\simeq\RD_\BBV( \RD_\BBV F_1\star\RD_\BBV F_2),
\quad(F_1\npstar F_2)\npstar F_3\simeq F_1\npstar( F_2\npstar F_3).
\eneqn
\end{proposition}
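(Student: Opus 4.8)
The strategy is to transport every assertion about the non-proper convolution $\npstar$ to the proper convolution $\star$ by conjugating with the duality functor $\RD_\BBV$, and then to invoke that $\star$ is already known to be commutative and associative. The point is precisely that $\npstar$ itself is modelled on the non-proper composition of kernels, which need not be associative (see the remark following Lemma~\ref{le:opbyker}), whereas $\star$ is. Throughout, the fact that $\Derb(\Vi)=\Derb_\Rc(\cor_{\Vi})$ and that the operations $\etens$, $\RD_\BBV$, $\reim{s}$, $\roim{s}$ all preserve $\R$-constructibility (Propositions~\ref{pro:opersainftyint} and~\ref{pro:opersainftyext}, applied to the addition morphism $s$, which is a morphism of b-analytic manifolds) is what lets us use Corollary~\ref{cor:opersainftyext} and biduality freely on the intermediate objects.

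\emph{First isomorphism.} I would start from the definition $F_1\npstar F_2=\roim{s}(F_1\etens F_2)$ and apply Corollary~\ref{cor:opersainftyext} to $s$, noting that $F_1\etens F_2$ is b-constructible on $\BBV\times\BBV$ by Proposition~\ref{pro:opersainftyint}~(i). This gives $\roim{s}(F_1\etens F_2)\simeq\RD_\BBV\,\reim{s}\,\RD_{\BBV\times\BBV}(F_1\etens F_2)$. It then remains to identify $\RD_{\BBV\times\BBV}(F_1\etens F_2)$ with $\RD_\BBV F_1\etens\RD_\BBV F_2$: this is the Künneth formula for the duality of $\R$-constructible sheaves, obtained by combining $\omega_{\BBV\times\BBV}\simeq\omega_\BBV\etens\omega_\BBV$ with the isomorphism $\rhom(A_1\etens A_2,B_1\etens B_2)\simeq\rhom(A_1,B_1)\etens\rhom(A_2,B_2)$, which holds since the $F_i$ are $\R$-constructible. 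Substituting yields $F_1\npstar F_2\simeq\RD_\BBV\,\reim{s}(\RD_\BBV F_1\etens\RD_\BBV F_2)=\RD_\BBV(\RD_\BBV F_1\star\RD_\BBV F_2)$.

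\emph{Associativity.} I would then iterate this identity, using biduality $\RD_\BBV\RD_\BBV\simeq\id$ on $\Derb_\Rc(\cor_\BBV)$. This is licit because, by Propositions~\ref{pro:opersainftyint} and~\ref{pro:opersainftyext}, the sheaf $\RD_\BBV F_i\star\RD_\BBV F_j$ stays in $\Derb_\Rc(\cor_{\Vi})$, whence $\RD_\BBV(F_i\npstar F_j)\simeq\RD_\BBV\RD_\BBV(\RD_\BBV F_i\star\RD_\BBV F_j)\simeq\RD_\BBV F_i\star\RD_\BBV F_j$. Applying the first isomorphism twice then gives
\[
(F_1\npstar F_2)\npstar F_3\simeq\RD_\BBV\bigl((\RD_\BBV F_1\star\RD_\BBV F_2)\star\RD_\BBV F_3\bigr),\qquad F_1\npstar(F_2\npstar F_3)\simeq\RD_\BBV\bigl(\RD_\BBV F_1\star(\RD_\BBV F_2\star\RD_\BBV F_3)\bigr).
\]
The associativity isomorphism~\eqref{eq:assockernels} for proper convolution identifies the two right-hand sides, and applying $\RD_\BBV$ yields the claim.

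I do not expect a genuine obstacle: the Künneth formula for $\RD_\BBV$ and biduality are standard for $\R$-constructible sheaves, so the only actual work is the bookkeeping that keeps every intermediate sheaf $\R$-constructible (indeed b-constructible), so that Corollary~\ref{cor:opersainftyext}, biduality, and the associativity of $\star$ all legitimately apply — and this bookkeeping is entirely supplied by Propositions~\ref{pro:opersainftyint} and~\ref{pro:opersainftyext} of this section.
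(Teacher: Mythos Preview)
Your proof is correct and follows exactly the paper's approach: the paper's proof simply says ``(i) follows from Corollary~\ref{cor:opersainftyext}; (ii) follows from (i) and the associativity of the usual convolution'', and you have faithfully unpacked both steps, including the K\"unneth identification $\RD_{\BBV\times\BBV}(F_1\etens F_2)\simeq\RD_\BBV F_1\etens\RD_\BBV F_2$ and the biduality step, which the paper leaves implicit.
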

\begin{proof}
(i) The first isomorphism follows from Corollary~\ref{cor:opersainftyext}.

\spa
(ii) The second isomorphism follows from the first one and the associativity of the usual convolution.
\end{proof}

\begin{remark}
Proposition~\ref{pro:assocnpstar} is remarkable since, as already mentioned,  the operation $\npstar$ is not associative in general.
\end{remark}

\subsubsection*{$\gamma$-topology}
References to the $\gamma$-topology and its links with sheaf theory are made to~\cites{KS90, KS18}.
We consider a real $n$-dimensional vector space $\BBV$. We set $\dot\BBV=\BBV\setminus\{0\}$ and we recall that $\Vi$ is defined in~\eqref{eq:Vi}. Clearly,  the antipodal map $a\cl\BBV\to\BBV$, $x\mapsto -x$, is a morphism of b-analytic manifolds. 
For a subset $A$ of $\BBV$, we denote by $A^a$ its image by the antipodal map.

A subset $\gamma$ of $\BBV$ is called a cone if  $\R_{>0}\gamma=\gamma$. A closed convex cone  $\gamma$ is proper
if $\gamma\cap \gamma^a=\{0\}$. 

 We consider a cone $\gamma\subset\BBV$  and we assume:
 \eq\label{hyp1}
&&\parbox{75ex}{\em{
$\gamma$ is  a closed  convex proper subanalytic cone with non-empty interior. }
}\eneq
\begin{lemma}\label{lem:gammainfty}
Let $\gamma\subset\dV$ be a cone,  subanalytic in $\dV$. Then  $\gamma$ is subanalytic up to infinity.
\end{lemma}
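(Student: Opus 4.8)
The plan is to show that the closure of $\gamma$ in $\BBP$ is obtained by adding only the single point at infinity together with, possibly, the origin, and that this closure is subanalytic in $\BBP$. Concretely, I would use the explicit description of the projective compactification $\BBP=(\BBV\oplus\R\setminus\{0\})/\R^\times$, in which $\BBV$ embeds as the chart $\{[v:1]\}$ and the hyperplane at infinity $\BBP_\infty=\{[v:0]\}$ is a copy of the projective space $(\dV)/\R^\times$. Since $\gamma$ is a cone, $\R_{>0}\gamma=\gamma$, so the directions occurring in $\gamma$ form a subanalytic subset of $\BBP_\infty$; the point is that the closure $\ol\gamma^{\BBP}$ of $\gamma$ in $\BBP$ is precisely $\gamma$ together with the image of $\gamma\setminus\{0\}$ in $\BBP_\infty$, i.e. $\ol\gamma^{\BBP}=\gamma\sqcup(\gamma\setminus\{0\})/\R^\times$.

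The key steps, in order. First I would note that $\BBV\hookrightarrow\BBP$ is the standard open affine embedding and that the map $\BBV\oplus\R\setminus\{0\}\to\BBP$ is a real analytic submersion, hence subanalytic and proper (its fibers are the $\R^\times$-orbits, and restricted to any sphere it is finite-to-one and proper). Second, I would lift: let $\widetilde\gamma=\{(v,t)\in\BBV\oplus\R\setminus\{0\}; t>0,\ v/t\in\gamma\}\cup\{(v,0); v\in\gamma\setminus\{0\}\}$. Using hypothesis~\eqref{hyp1} that $\gamma$ is subanalytic (in $\dV$, hence—once we know the lemma is not circular—it suffices here that $\gamma$ is subanalytic as a subset of $\BBV$, which is part of the standing hypothesis and is a local/semianalytic-type condition that does extend), $\widetilde\gamma$ is subanalytic in $\BBV\oplus\R\setminus\{0\}$: the condition $v/t\in\gamma$ with $t>0$ is $v\in t\gamma$ which is subanalytic since $\gamma$ is a cone, and adding the closed subanalytic piece at $t=0$ is harmless. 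Moreover $\widetilde\gamma$ is exactly the preimage of its image, and it is relatively compact after intersecting with a sphere; the image of $\widetilde\gamma$ under the proper subanalytic map to $\BBP$ is therefore subanalytic in $\BBP$. Finally I would check that this image is exactly $\ol\gamma^{\BBP}$ (equivalently, that it already contains $\gamma$ and that its only boundary additions lie over the point(s) of $\BBP_\infty$ coming from directions of $\gamma$), which is immediate from the chart formulas, and conclude that $\gamma$ is subanalytic in $\BBP$, i.e. subanalytic up to infinity.

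The main obstacle I anticipate is the bookkeeping at the hyperplane at infinity: one must be careful that ``$\gamma$ subanalytic in $\dV$'' (as hypothesized in the lemma) really does give subanalyticity of the relevant lifted set near $t=0$, since subanalyticity is only local for finite coverings and $\dV$ is not relatively compact. The clean way around this is to work on the sphere: replace $\gamma\setminus\{0\}$ by its image $\gamma\cap S^{n-1}$ (with $S^{n-1}$ the unit sphere, which is compact), note that $\gamma$ subanalytic in $\dV$ and conic implies $\gamma\cap S^{n-1}$ is subanalytic in the compact manifold $S^{n-1}$, hence subanalytic in $\BBP_\infty$, and then reconstruct $\ol\gamma^{\BBP}$ as the (subanalytic, by properness) image of the compact subanalytic set $\{(v,t); t\ge 0,\ (v,t)\ne 0,\ v\in\overline{\R_{\ge0}(\gamma\cap S^{n-1})}\cap\{\text{appropriate sphere}\}\}$. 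With that reformulation the only operations used are: $\gamma$ is a cone, intersection with a sphere, closure, and direct image under a proper subanalytic map—all of which preserve subanalyticity, so the argument goes through without leaving the relatively compact setting.
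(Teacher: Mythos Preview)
Your approach via the quotient $\BBV\oplus\R\setminus\{0\}\to\BBP$ restricted to a sphere is a legitimate alternative to the paper's, but as written it has a genuine gap. To make $\widetilde\gamma=\gamma\times[0,\infty)$ subanalytic in $\BBV\times\R$ (so that its trace on $S^n$ is subanalytic and can be pushed forward by the proper map $S^n\to\BBP$), you need $\gamma$ to be subanalytic in $\BBV$, not just in $\dV$. You assert this is ``part of the standing hypothesis'' or that subanalyticity is ``a local/semianalytic-type condition that does extend'' from $\dV$ to $\BBV$; neither is correct. The hypothesis of the lemma is only subanalyticity in $\dV$, and a subset of $\dV$ subanalytic in $\dV$ need not be subanalytic in $\BBV$ (e.g.\ $\{1/n:n\ge1\}\subset\R\setminus\{0\}$). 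That a \emph{cone} subanalytic in $\dV$ is subanalytic in $\BBV$ is true but requires an argument; the paper obtains it as step~(a) by citing \cite{KS90}*{Prop.~8.3.8~(i)}. You also invoke~\eqref{hyp1}, but the lemma is stated for arbitrary cones subanalytic in $\dV$, not only for the closed convex proper cone of~\eqref{hyp1}. Once you supply step~(a), your sphere argument does go through.

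For comparison, the paper's proof is quite different and shorter: after step~(a), it uses the inversion $f(x)=x/\vvert x\vvert^2$, an analytic automorphism of $\dV$ whose graph is subanalytic in $\BBP\times\BBP$, so that $f$ is an automorphism of the relevant b-analytic manifold and exchanges neighborhoods of the origin with neighborhoods of the hyperplane at infinity. Since $f(\gamma)$ is again a cone subanalytic in $\dV$, step~(a) applied to $f(\gamma)$ gives subanalyticity of $f(\gamma)$ in $\BBV$, hence of $\gamma$ near $H_\infty$, and locality finishes. Two smaller inaccuracies in your write-up: the quotient map $\BBV\oplus\R\setminus\{0\}\to\BBP$ is not proper (its fibers are copies of $\R^\times$; only the restriction to a sphere is proper, as you eventually use), and the image of $\widetilde\gamma\cap S^n$ in $\BBP$ equals $\gamma\sqcup(\gamma/\R_{>0})$, which is the closure $\ol\gamma^{\,\BBP}$ only when $\gamma$ is already closed in $\BBV$---something the lemma does not assume.
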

\begin{proof}
(a) The set $\gamma$ is subanalytic in $\BBV$ by ~\cite{KS90}*{Prop.~8.3.8~(i)}.

\spa
(b) Choose a subanalytic norm $\vvert\cdot\vvert$ on $\BBV$ and consider the real analytic isomorphism  $f\cl \dot\BBV\to\dot\BBV$, $f(x)=x/\vvert x\vvert^2$. The map $f$ defines an automorphism of the b-analytic manifold $\Vi$. 
It is thus enough to check that $f(\gamma)$ is subanalytic in $\BBV$. Since this set is a subanalytic cone, this follows from (a).
\end{proof}
The family of $\gamma$-invariant open subsets $U$ of $\BBV$ (that is, satisfying $U=U+\gamma$) defines a topology, which is called  the {\em $\gamma$-topology} on $\BBV$.
One denotes by $\BBV_\gamma$ the space $\BBV$ endowed with the $\gamma$-topology and one denotes by
\begin{equation}
\phig \cl\BBV \to \BBV_\gamma
\end{equation}
the continuous map associated with the identity.
Note that the closed sets for this topology are the $\gamma^a$-invariant closed subsets of $\BBV$ and that a subset  is $\gamma$-locally closed if it is the intersection of a $\gamma$-closed subset and a $\gamma$-open subset. 

\begin{lemma}\label{le:gamloccld}
Let $A\subset\BBV$. The conditions below are equivalent:
\banum
\item
$A=(U+\gamma)\cap(\ol{U+\gamma^a})$ with  $U$ open and  subanalytic up to infinity. 
\item
$A$ is the intersection of a $\gamma$-closed subset $S$ and a $\gamma$-open subset $U$, both $S$ and $U$ being subanalytic up to infinity.
\item
$A$ is $\gamma$-locally closed and $A$ is subanalytic up to infinity.
\eanum
\end{lemma}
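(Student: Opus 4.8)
The plan is to prove the cycle of implications (a)$\Rightarrow$(b)$\Rightarrow$(c)$\Rightarrow$(a), using throughout the facts established earlier: that the $\gamma$-closed sets are exactly the $\gamma^a$-invariant closed subsets, that the $\gamma$-open sets are the $\gamma$-invariant open subsets, and that the family of subsets subanalytic up to infinity is stable under closure, interior, finite unions and finite intersections (and that $U+\gamma$ makes sense as an image under the addition morphism $s\cl\BBV\times\BBV\to\BBV$, which is a morphism of b-analytic manifolds, so by Proposition~\ref{pro:opersainfty} it preserves subanalyticity up to infinity). A subtlety to keep in mind is that $U+\gamma$ need not be b-subanalytic for a b-subanalytic $U$ in general, because $\gamma$ itself is only assumed closed convex proper subanalytic with nonempty interior by~\eqref{hyp1} — but $\gamma$ being subanalytic and $U$ being relatively compact and b-subanalytic, the set $U\times\gamma$ has b-subanalytic image only after one checks relative compactness issues; here I would invoke Lemma~\ref{lem:gammainfty} and Proposition~\ref{pro:opersainfty}(c) carefully, or rather argue that $U+\gamma$ is $\gamma$-open hence one reduces to it being b-subanalytic directly from $s$ being a b-morphism applied to $U\times\gamma$, noting $U$ relatively compact suffices for the projection to preserve the property.

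First, (a)$\Rightarrow$(b): given $A=(U+\gamma)\cap(\ol{U+\gamma^a})$ with $U$ open and b-subanalytic, set $W=U+\gamma$ and $S=\ol{U+\gamma^a}$. The set $W$ is $\gamma$-open by construction ($W+\gamma = U+\gamma+\gamma = U+\gamma = W$ since $\gamma$ is a convex cone with $0\in\gamma$), and it is b-subanalytic as the image of the b-subanalytic set $U\times\gamma$ under the b-morphism $s$ (using Lemma~\ref{lem:gammainfty} and Proposition~\ref{pro:opersainfty}(c)). The set $S$ is closed and $\gamma^a$-invariant — indeed $U+\gamma^a$ is $\gamma^a$-invariant and the closure of a $\gamma^a$-invariant set is again $\gamma^a$-invariant because $\gamma^a$ is a cone — hence $S$ is $\gamma$-closed; and $S$ is b-subanalytic since $U+\gamma^a$ is (same argument as for $W$) and the family of b-subanalytic sets is stable under closure. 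Then $A = W\cap S$ is the required intersection.

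Next, (b)$\Rightarrow$(c): if $A=S\cap U$ with $S$ $\gamma$-closed, $U$ $\gamma$-open, and both b-subanalytic, then $A$ is $\gamma$-locally closed by the very definition recalled just before the lemma, and $A$ is b-subanalytic as the intersection of two b-subanalytic sets. Finally, (c)$\Rightarrow$(a) is the main obstacle and where the real content lies: given $A$ $\gamma$-locally closed and b-subanalytic, I need to produce an open b-subanalytic $U$ with $A=(U+\gamma)\cap\ol{(U+\gamma^a)}$. The natural candidate is $U = \Int_\gamma(A)$, the $\gamma$-interior, or more concretely one writes $A = G\cap F$ with $G$ $\gamma$-open and $F$ $\gamma$-closed (by $\gamma$-local closedness), observes $G\cap F$ b-subanalytic forces — via the subanalytic-geometry fact (the b-analogue of the remark before "Constructible sheaves", that a locally closed b-subanalytic set is the intersection of a b-subanalytic open and a b-subanalytic closed set) — that one may take $G,F$ themselves b-subanalytic, and then sets $U = G\setminus\ol{(F^a\text{-complement})}$ adjusted so that $U+\gamma=G$ and $\ol{U+\gamma^a}=F$. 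The delicate point is verifying these two equalities simultaneously: that $U+\gamma$ recovers exactly the $\gamma$-open set and that $\ol{U+\gamma^a}$ recovers exactly the $\gamma$-closed set, using properness of $\gamma$ (so that $\gamma\cap\gamma^a=\{0\}$) and nonempty interior of $\gamma$ (so that $U+\gamma$ is actually open and "fills out" correctly). I expect to need the standard description, from~\cite{KS90}, of $\gamma$-open sets in terms of unions of translates of $\Int\gamma$, together with a b-subanalytic bookkeeping to keep everything subanalytic up to infinity at each step; closing the argument will amount to checking that the chosen $U$ is open and b-subanalytic and that no points are lost or gained in passing to $U+\gamma$ and $\ol{U+\gamma^a}$.
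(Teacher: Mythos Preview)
Your implications (a)$\Rightarrow$(b) and (b)$\Rightarrow$(c) are correct and match the paper's proof essentially verbatim: the paper also argues that $U+\gamma$ is b-subanalytic as the image of $U\times\gamma$ under the addition map $s$ (via Proposition~\ref{pro:opersainfty}), and (b)$\Rightarrow$(c) is declared obvious.

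For (c)$\Rightarrow$(a), however, you are making things much harder than necessary. The paper does not attempt to decompose $A=G\cap F$ and then manufacture a suitable $U$ from $G$ and $F$; instead it invokes \cite{KS18}*{Prop.~3.4}, which already asserts that \emph{any} $\gamma$-locally closed set $A$ can be written as $A=(U+\gamma)\cap(\ol{U+\gamma^a})$ with $U=\Int(A)$, the ordinary interior of $A$ in $\BBV$. Once you know this, the b-subanalyticity of $U$ is immediate: $A$ is b-subanalytic by hypothesis, and the family of b-subanalytic sets is stable under taking interiors. That is the entire argument.

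Your candidate $U=\Int_\gamma(A)$ is not what is used, and your alternative route through an arbitrary decomposition $A=G\cap F$ runs into the problem you yourself flag: there is no reason the $\gamma$-open and $\gamma$-closed pieces $G,F$ can be chosen b-subanalytic (the ``b-analogue'' you invoke concerns the ordinary topology, not the $\gamma$-topology), and even if they could, recovering a single $U$ with $U+\gamma=G$ and $\ol{U+\gamma^a}=F$ simultaneously is exactly the content of \cite{KS18}*{Prop.~3.4} that you would be reproving. So the plan as written has a genuine gap at this step; the fix is simply to cite that proposition and take $U=\Int(A)$.
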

\begin{proof}
(a)$\Rightarrow$(b). It is enough to check that $U$ being subanalytic up to infinity, $U+\gamma$ is subanalytic up to infinity. This set is the image of the set $U\times\gamma$ by the map 
$s\cl\BBV\times\BBV\to\BBV$, $(x,y)\mapsto x+y$. 
Hence, the result follows from Proposition~\ref{pro:opersainfty}.

\spa
(b)$\Rightarrow$(c) is obvious.

\spa
(c)$\Rightarrow$(a). By~\cite{KS18}*{Prop.~3.4}, we may write $A=(U+\gamma)\cap(\ol{U+\gamma^a})$ with  $U=\Int(A)$. 
Therefore, $U$ is subanalytic up to infinity.
\end{proof}

\begin{definition}\label{def:loclo}
Let $A$ be a subset of $\BBV$. 
One says that $A$ is   {\em b-subanalytic  $\gamma$-locally closed} if $A$ satisfies one of the equivalent conditions in Lemma~\ref{le:gamloccld}.
\end{definition}

Let $\gamma$  be a  cone satisfying~\eqref{hyp1}. Recall that one denotes by  $\gamma^\circ\subset \BBV^*$ the polar cone.:
\eqn
&&\gamma^\circ=\{y\in\BBV^*;\langle x, y \rangle\geq0\mbox{ for all }x\in\gamma\}.
\eneqn

\subsubsection*{$\gamma$-constructible sheaves}\label{subsect:gammashv}

Consider the full triangulated subcategories of the category $\Derb(\cor_\BBV)$:
\eq\label{not:0}
&&\left\{\begin{array}{l}
\Derb_{\gammac}(\cor_{\BBV})\eqdot\{F\in\Derb(\cor_{\BBV}) ;\musupp(F)\subset \BBV\times\gammac\},\\[1ex]
\Derb_{\rcg}(\cor_{\Vi})\eqdot\Derb_{\Rc}(\cor_{\Vi})\cap\Derb_{\gammac}(\cor_{\BBV}).
\end{array}\right.
\eneq
We call an object of the category $\Derb_{\rcg}(\cor_{\Vi})$ a $\gamma$-constructible sheaf. 

\begin{theorem}\label{th:rcg}
Let $F\in\Derb_{\rcg}(\cor_{\Vi})$. Then  there exists a finite partition $\BBV=\bigsqcup_{a\in A}Z_a$ where the $Z_a$'s are 
b-subanalytic $\gamma$-locally closed and $F\vert_{Z_a}$ is constant. 
\end{theorem}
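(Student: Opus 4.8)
The plan is to leverage Theorem~\ref{th:rcg}'s two hypotheses simultaneously: $\R$-constructibility up to infinity on the one hand, and the micro-support condition $\SSi(F)\subset\BBV\times\gammac$ on the other. The first hypothesis, via Lemma~\ref{le:KS21}, gives $\eim{j_\BBV}F\in\Derb_\Rc(\cor_\BBP)$, hence (by~\cite{KS90}*{Ch.~VIII}) a finite subanalytic stratification $\BBP=\bigsqcup_{a\in A}Y_a$ adapted to $\eim{j_\BBV}F$; restricting to $\BBV$ yields a \emph{finite} subanalytic stratification $\BBV=\bigsqcup_{a\in A}X_a$, with each $X_a$ b-subanalytic, on whose strata $F$ is locally constant of finite rank. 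The point is that this partition is genuinely finite (this is exactly the benefit of working up to infinity), but the strata $X_a$ need not be $\gamma$-locally closed. The second hypothesis will be used to \emph{correct} this: since $\SSi(F)\subset\BBV\times\gammac$, the sheaf $F$ is in particular $\gamma$-locally constant in the sense relevant to the $\gamma$-topology, so one expects that $F$ is constant along the $\gamma$-orbits it meets within each $X_a$, after a suitable refinement.

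The key steps, in order, would be: (1) Produce the finite b-subanalytic stratification $\BBV=\bigsqcup_{a}X_a$ as above from Lemma~\ref{le:KS21}. (2) Recall from~\cite{KS90} (the structure theory for the $\gamma$-topology, cf.\ also~\cite{KS18}) that the condition $\SSi(F)\subset\BBV\times\gammac$ is equivalent to $F\simeq\opb{\phig}\roim{\phig}F$, i.e.\ $F$ comes from the $\gamma$-topology; concretely, on the cotangent level, the only codirections in $\SSi(F)$ lie in $\gammac$, which means $F$ is constant in the directions of $\Int(\gamma)$ (and $\Int(\gamma^a)$). (3) Intersect/refine: the sets $X_a$ and their $\gamma$- and $\gamma^a$-saturations are all b-subanalytic (by Proposition~\ref{pro:opersainfty}, since $X_a+\gamma$ is the image of $X_a\times\gamma$ under $s$, which is a morphism of b-analytic manifolds, and similarly for $\gamma^a$); taking the Boolean algebra generated by the finitely many sets $\{X_a+\gamma\}$, $\{X_a+\gamma^a\}$ produces a finite b-subanalytic partition $\BBV=\bigsqcup_{b}Z_b$ all of whose pieces are $\gamma$-invariant-or-$\gamma^a$-invariant-built, hence (after one further intersection) b-subanalytic $\gamma$-locally closed in the sense of Definition~\ref{def:loclo}. (4) Check $F\vert_{Z_b}$ is constant: each $Z_b$ lies in a single $X_a$, so $F\vert_{Z_b}$ is locally constant of finite rank; and $Z_b$ is a union of $\gamma$-orbit-segments, along which $F$ is constant by the micro-support condition, so $F\vert_{Z_b}$ is globally constant. (One should arrange in step (3) that the $Z_b$ are connected, or argue connectivity of the relevant $\gamma$-slices; a b-subanalytic $\gamma$-locally closed set of the form $(U+\gamma)\cap\overline{U+\gamma^a}$ with $U$ contained in a stratum is the natural candidate.)

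The main obstacle I expect is step (3)–(4): ensuring that the refinement can be done with \emph{finitely} many b-subanalytic $\gamma$-locally closed pieces on which $F$ is truly constant (not merely locally constant). The subanalytic-up-to-infinity finiteness handles the "finitely many" part, but one must verify that along each piece $Z_b$ the $\gamma$-direction of propagation actually connects all local constancy data — i.e.\ that $Z_b$ meets each $X_a$-component it touches in a $\gamma$-connected way. This is where the precise shape $(U+\gamma)\cap\overline{U+\gamma^a}$ from Lemma~\ref{le:gamloccld}, together with the fact that $\gamma$ has nonempty interior (hypothesis~\eqref{hyp1}), should be used: such sets are "$\gamma$-convex" enough that the non-characteristic propagation theorem for $F$ (micro-support in $\BBV\times\gammac$) forces $F\vert_{Z_b}$ to be a single constant sheaf. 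Once the combinatorics of the finite refinement and the connectivity bookkeeping are pinned down, the rest is a routine application of the results already recalled in the excerpt.
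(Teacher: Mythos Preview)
The paper does not actually prove this theorem: its ``proof'' consists of a citation to Miller~\cite{Mi20b} (building on~\cite{Mi20a}) for the general statement, together with a remark that the special PL/polyhedral case was done in~\cite{KS18}*{Th.~3.18}. So there is no argument in the paper to compare your proposal against; the relevant comparison is with those external references.

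Your outline is essentially the strategy of~\cite{KS18}*{Th.~3.18}: start from a finite stratification coming from constructibility up to infinity, then use the micro-support condition $\SSi(F)\subset\BBV\times\gammac$ to propagate and reorganise into $\gamma$-locally closed pieces. That strategy is sound in the PL/polyhedral setting, where the $\gamma$-saturations of strata are polyhedra and the Boolean combinatorics stays tame. The paper explicitly flags that the general subanalytic case required Miller's separate work, which is a signal that the passage from PL to subanalytic is not a formality.

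The gap in your sketch is exactly where you locate it, and it is more serious than ``bookkeeping''. In step~(3), the atoms of the Boolean algebra generated by the $X_a+\gamma$ and $X_a+\gamma^a$ need not be $\gamma$-locally closed, and your parenthetical ``after one further intersection'' does not explain how to reach the form $(U+\gamma)\cap\overline{U+\gamma^a}$ while keeping the family finite and b-subanalytic. In step~(4), the claim ``each $Z_b$ lies in a single $X_a$'' is unjustified: $\gamma$-saturations of distinct strata typically overlap, so the refined pieces will straddle several $X_a$'s, and you then need a genuine propagation argument (not just local constancy on each $X_a$) to conclude global constancy on $Z_b$. The micro-support condition does give non-characteristic propagation in the $\Int(\gamma)$-directions, but turning that into ``$F\vert_{Z_b}$ is a constant sheaf'' requires controlling the shape of $Z_b$ much more precisely than the Boolean-algebra construction does. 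This is the content of Miller's argument, and it is not routine; your final sentence understates the difficulty.
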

\begin{proof}
 This result is proved  by Ezra Miller in~\cite{Mi20b}, using the tools of~\cite{Mi20a}. 
If we make the extra hypothesis  that $F$ is $\PL$ (piecewise linear) and the cone $\gamma$ is polyhedral, then this  result is proved 
in~\cite{KS18}*{Th.~3.18}. Note that in loc.\ cit.\ the notion of being subanalytic up to infinity is not used and the partition (which is called a stratification there) is only locally finite. However,  in our situation, the fact that the partition is finite is implicit in  the first part of the proof. 
\end{proof}

\begin{lemma}\label{le:projgamma}
The endofunctor $\cor_{\gamma^a}\npstar$ of   \,$\Derb(\cor_{\BBV})$ defines a projector 
$\Derb_\Rc(\cor_{\Vi})\to \Derb_{\rcg}(\cor_{\Vi})$. 
\end{lemma}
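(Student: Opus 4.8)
The plan is to verify three properties of the endofunctor $P\eqdot\cor_{\gamma^a}\npstar(\scbul)$: that it sends $\Derb_\Rc(\cor_{\Vi})$ into $\Derb_{\rcg}(\cor_{\Vi})$, that it is idempotent, and that it restricts to the identity functor on $\Derb_{\rcg}(\cor_{\Vi})$. Together these say that $P$ is a projector onto $\Derb_{\rcg}(\cor_{\Vi})$; the restriction morphism $\cor_{\gamma^a}\to\cor_{\{0\}}$ to the closed point $0\in\gamma^a$ moreover produces the natural transformation $P\to\id$ exhibiting this as a coreflection, but this will not be needed. To begin, $\gamma\setminus\{0\}$ is a subanalytic cone in $\dot\BBV$, hence subanalytic up to infinity by Lemma~\ref{lem:gammainfty}, so $\gamma^a$ is a closed b-subanalytic subset of $\BBV$ and $\cor_{\gamma^a}\in\Derb_\Rc(\cor_{\Vi})$. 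Since $s$ is a morphism of b-analytic manifolds, Propositions~\ref{pro:opersainftyint} and~\ref{pro:opersainftyext} give $\cor_{\gamma^a}\etens F\in\Derb_\Rc(\cor_{(\BBV\times\BBV)_\infty})$ and then $P(F)=\roim{s}(\cor_{\gamma^a}\etens F)\in\Derb_\Rc(\cor_{\Vi})$ for every $F\in\Derb_\Rc(\cor_{\Vi})$.

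Next I would check that $P(F)$ also lies in $\Derb_{\gammac}(\cor_\BBV)$, using the dual description of the non-proper convolution from Proposition~\ref{pro:assocnpstar}: $P(F)\simeq\RD_\BBV\bigl(\RD_\BBV\cor_{\gamma^a}\star\RD_\BBV F\bigr)$. The dual of the constant sheaf on a closed convex proper cone with non-empty interior is $\RD_\BBV\cor_{\gamma^a}\simeq\cor_{\Int(\gamma^a)}\,[\dim\BBV]$ (a standard computation, see~\cite{KS90}), and $\SSi(\cor_{\Int(\gamma^a)})\subset\BBV\times(\gamma^\circ)^a$. Applying the micro-support estimate for proper convolution, $\SSi(G_1\star G_2)\subset\{(x_1+x_2;\xi)\,;\,(x_i;\xi)\in\SSi(G_i)\}$ (\cite{KS90}*{\S~3.6}), together with $\SSi(\RD_\BBV H)=\SSi(H)^a$, one gets $\SSi(P(F))\subset\BBV\times\gamma^\circ$. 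Hence $P(F)\in\Derb_{\gammac}(\cor_\BBV)$, and combined with the previous paragraph $P$ maps $\Derb_\Rc(\cor_{\Vi})$ into $\Derb_{\rcg}(\cor_{\Vi})$.

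For idempotency it is enough to prove $\cor_{\gamma^a}\npstar\cor_{\gamma^a}\simeq\cor_{\gamma^a}$: since $\npstar$ is associative on $\Derb_\Rc(\cor_{\Vi})$ by Proposition~\ref{pro:assocnpstar} and $P(F)$ lies in $\Derb_\Rc(\cor_{\Vi})$, this yields $P(P(F))\simeq(\cor_{\gamma^a}\npstar\cor_{\gamma^a})\npstar F\simeq P(F)$. To prove the isomorphism I would dualize once more, $\cor_{\gamma^a}\npstar\cor_{\gamma^a}\simeq\RD_\BBV\bigl(\cor_{\Int(\gamma^a)}\star\cor_{\Int(\gamma^a)}\bigr)[2\dim\BBV]$, and then compute the proper convolution stalkwise: $\cor_{\Int(\gamma^a)}\star\cor_{\Int(\gamma^a)}\simeq\cor_{\Int(\gamma^a)}\,[-\dim\BBV]$, since the stalk at $x$ is $R\Gamma_c\bigl(\Int(\gamma^a)\cap(x-\Int(\gamma^a));\cor\bigr)$, which vanishes for $x\notin\Int(\gamma^a)$ and, for $x\in\Int(\gamma^a)$, is the compactly supported cohomology of a non-empty bounded open convex set, boundedness following from the properness of $\gamma$. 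Biduality then yields $\cor_{\gamma^a}\npstar\cor_{\gamma^a}\simeq\RD_\BBV\RD_\BBV\cor_{\gamma^a}\simeq\cor_{\gamma^a}$.

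Finally, the key point: $P$ is the identity on $\Derb_{\rcg}(\cor_{\Vi})$. The restriction morphism $\cor_{\gamma^a}\to\cor_{\{0\}}$ induces a natural morphism $P(G)\to\cor_{\{0\}}\npstar G\simeq G$, the last isomorphism because $s$ is an isomorphism, in particular proper, on $\{0\}\times\BBV$. I would prove this morphism is an isomorphism whenever $\SSi(G)\subset\BBV\times\gamma^\circ$ by passing to stalks: one identifies $(\cor_{\gamma^a}\npstar G)_x\simeq R\Gamma(x+\gamma;G)$ naturally in $x$, the morphism above being restriction along $\{x\}\into x+\gamma$; and the micro-support condition $\SSi(G)\subset\BBV\times\gamma^\circ$ is exactly what makes this restriction an isomorphism for all $x$ --- this is the link between the $\gamma$-topology and the micro-support recalled in~\cite{KS90}*{\S~3.5}, together with the microlocal cut-off lemma. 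Thus $P(G)\simeq G$ for $G\in\Derb_{\rcg}(\cor_{\Vi})$, which completes the proof that $P$ is a projector onto $\Derb_{\rcg}(\cor_{\Vi})$. The main obstacle is precisely this last stalk identification, since $\npstar$ uses the non-proper direct image $\roim{s}$ and the proper base change stalk formula is not available; one handles it either through the inductive-limit description of $\roim{s}$ and the contractibility of the sets $\gamma^a\cap(V-b)$ for small neighbourhoods $V$ of $x$ and $b\in x+\gamma$, or, as in the previous steps, by reducing via Proposition~\ref{pro:assocnpstar} and $\RD_\BBV$ to the analogous statement for the proper convolution $\cor_{\Int(\gamma^a)}\star(\scbul)$.
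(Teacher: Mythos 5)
Your strategy is viable but much more hands-on than the paper's: the paper's entire proof consists of quoting \cite{KS90}*{Prop.~5.2.3} (the functor $\opb{\phig}\roim{\phig}$ is a projector of $\Derb(\cor_\BBV)$ onto the subcategory of sheaves with micro-support in $\BBV\times\gamma^{\circ a}$), \cite{KS90}*{Prop.~3.5.4} (this functor is isomorphic to $\cor_{\gamma^a}\npstar$), and Proposition~\ref{pro:opersainftyext} for the preservation of b-constructibility --- the one point you treat identically. Your micro-support estimate for $P(F)$ and your idempotency computation are thus re-derivations of facts already contained in those citations; and at the decisive step, the identity on $\Derb_{\rcg}(\cor_{\Vi})$, you yourself fall back on ``the link between the $\gamma$-topology and the micro-support recalled in \cite{KS90}*{\S~3.5}'' together with the cut-off lemma, i.e.\ on essentially the same external results. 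The stalk identification $(\cor_{\gamma^a}\npstar G)_x\simeq\rsect(x+\gamma;G)$ and the reduction of the non-proper convolution to the proper one via $\RD_\BBV$ are correct ideas (the second is legitimate because $\reim{}$, unlike $\roim{}$, commutes with stalks), but as written this step remains a sketch of \cite{KS90}*{Prop.~3.5.4, 5.2.3} rather than an independent proof.

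There is also one concrete error to fix: the antipodal directions. With the paper's conventions ($\gamma^\circ$ defined by $\langle x,y\rangle\geq 0$, the projector being $\cor_{\gamma^a}\npstar$, and the target category being $\Derb_{\gamma^{\circ a}}$, as the paper's proof of this lemma and of Theorem~\ref{th:Grgroup3} make explicit), one has $\SSi(\cor_{\Int(\gamma^a)})\subset\BBV\times\gamma^{\circ}$, not $\BBV\times\gamma^{\circ a}$: test $\BBV=\R$, $\gamma=[0,+\infty)$, where $\SSi(\cor_{(-\infty,0)})$ contains the covectors $(0;\xi)$, $\xi\geq0$. Running your duality argument with the corrected bound gives $\SSi(P(F))\subset\BBV\times\gamma^{\circ a}$, which is exactly the required containment; as written, you prove the antipodal one. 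The same slip reappears in your last paragraph: the hypothesis under which the restriction $\rsect(x+\gamma;G)\to G_x$ is an isomorphism is $\SSi(G)\subset\BBV\times\gamma^{\circ a}$ (test $G=\cor_{\gamma^a}$ in dimension one). Finally, a small bookkeeping point: the displayed shift in the idempotency step should be $\RD_\BBV\bigl(\cor_{\Int(\gamma^a)}\star\cor_{\Int(\gamma^a)}\bigr)[-2\dim\BBV]$; your concluding appeal to biduality is nevertheless correct, since $\cor_{\Int(\gamma^a)}\star\cor_{\Int(\gamma^a)}\simeq\cor_{\Int(\gamma^a)}[-\dim\BBV]$ gives $\RD_\BBV\cor_{\gamma^a}\star\RD_\BBV\cor_{\gamma^a}\simeq\RD_\BBV\cor_{\gamma^a}$.
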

 Denoting by $\iota$ the embedding $ \Derb_{\rcg}(\cor_{\Vi})\into \Derb_\Rc(\cor_{\Vi})$ and by $p$ the functor 
$\cor_{\gamma^a}\npstar$, we mean that $p\circ\iota$ is an equivalence.
\begin{proof}
We know by~\cite{KS90}*{Prop.~5.2.3} that the functor $\opb{\phig}\roim{\phig}\cl\Derb(\cor_{\BBV})\to \Derb_{\gamma^{\circ a}}(\cor_{\BBV})$ is a projector and we know by~\cite{KS90}*{Prop.~3.5.4} that the two functors $\opb{\phig}\roim{\phig}$ and 
$\cor_{\gamma^a}\npstar$ are isomorphic. Moreover, the functor $\cor_{\gamma^a}\npstar$ sends $\Derb_\Rc(\cor_{\Vi})$ to itself by 
Proposition~\ref{pro:opersainftyext}. 
\end{proof}

\begin{remark}
In general, non proper convolution is not defined on $\Derb_\Rc(\cor_\BBV)$ and, in particular, 
even if $\gamma$ is subanalytic, the functor $\cor_{\gamma^a}\npstar$ does not send $\Derb_\Rc(\cor_{\BBV})$ to itself.
\end{remark}

\section{A short review on constructible functions}\label{section:cstfct}
From now on and until the end of this paper, we assume that $\cor$  is a field of characteristic $0$.

In this section, we recall without proofs the main constructions and results on constructible
functions. References are made to~\cite{Sc91} and~\cite{ KS90}*{\S~9.7}. 

\subsection{From constructible sheaves to constructible functions}

\begin{definition}\label{def:constfct1}
 Let $X$ be a real analytic manifold. 
A function $\vphi \cl X\to \Z$ is constructible if:
\begin{itemize}
\item [(i)] for all $m \in \Z, \opb{\vphi}(m)$ is subanalytic in $X$,
\item [(ii)] the family $\{\opb{\vphi}(m)\}_{m\in \Z}$ is locally finite.
\end{itemize}
\end{definition}
\begin{notation}
For a locally closed subanalytic subset $S\subset X$, we denote by $\un_S$ the characteristic function of $S$ (with values $1$ on $S$ and $0$ elsewhere). 
For $a\in X$ we also set $\delta_a=\un_{\{a\}}$.
\end{notation}

The next result is well known. Note that the implication (b)$\Rightarrow$(d) follows from the  triangulation theorem for compact  subanalytic subsets (see~\cite{Ha76}).
\begin{lemma}\label{lem:3.3}
Let $\vphi$ be a $\Z$-valued function on $X$. The conditions below are equivalent.
\banum
\item
$\vphi$ is constructible,
\item
there exist a locally finite family of  subanalytic locally closed subsets
$\{Z_i\}_{i\in I}$  and $c_i \in \Z$ such that $ \vphi = \sum_ic_i {\bf 1}_{Z_i}$,
\item
there exist a subanalytic stratification 
$\{Z_i\}_{i\in I}$  and $c_i \in \Z$ such that $ \vphi = \sum_i c_i {\bf 1}_{Z_i}$,
\item
same as {\rm (b)} assuming moreover each $Z_i$ compact and contractible.
\eanum
\end{lemma}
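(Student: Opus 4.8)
The plan is to establish the cycle $(a)\Rightarrow(c)\Rightarrow(b)\Rightarrow(a)$ by elementary subanalytic bookkeeping, to observe that $(d)\Rightarrow(b)$ is trivial, and to reserve the real work for $(a)\Rightarrow(d)$, which I would deduce from the triangulation theorem for compact subanalytic sets.

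For $(a)\Rightarrow(c)$, I would note that the level sets $S_m=\opb{\vphi}(m)$ form, by hypothesis, a locally finite family of subanalytic subsets partitioning $X$, and invoke the standard existence of a subanalytic stratification $\{Z_i\}_{i\in I}$ subordinate to this family, so that each $Z_i$ is contained in some $S_{m(i)}$; then $\vphi=\sum_{i\in I}m(i)\un_{Z_i}$ since the $Z_i$ partition $X$. The implication $(c)\Rightarrow(b)$ needs no proof. For $(b)\Rightarrow(a)$: if $\vphi=\sum_i c_i\un_{Z_i}$ with $\{Z_i\}$ locally finite and each $Z_i$ locally closed subanalytic, then near any point only finitely many $Z_i$ are nonempty, so $\vphi$ takes only finitely many values there; hence $\{\opb{\vphi}(m)\}$ is locally finite, and each $\opb{\vphi}(m)$ is locally a finite Boolean combination of subanalytic sets, hence subanalytic, since subanalyticity is a local property. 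Finally $(d)\Rightarrow(b)$ holds because a compact set is closed, hence locally closed.

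The core is $(a)\Rightarrow(d)$, and the hard part there is the non-compactness of $X$: one must cut $\vphi$ into compactly supported pieces before triangulating. I would cover $X$ (which we may assume countable at infinity) by a countable, locally finite family $\{U_\alpha\}$ of relatively compact open subanalytic subsets, apply the shrinking lemma to obtain subanalytic open sets $V_\alpha$ with $\ol{V_\alpha}\subset U_\alpha$ still covering $X$, and set $W_\alpha=V_\alpha\setminus\bigcup_{\beta<\alpha}V_\beta$, a locally finite partition of $X$ into relatively compact subanalytic sets; thus $\vphi=\sum_\alpha\vphi\cdot\un_{W_\alpha}$. Each $\psi_\alpha:=\vphi\cdot\un_{W_\alpha}$ is constructible with support in the compact subanalytic set $\ol{V_\alpha}$ and has only finitely many nonzero level sets, by local finiteness of $\{\opb{\vphi}(m)\}$ together with compactness of $\ol{W_\alpha}$. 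Applying the triangulation theorem to $\ol{V_\alpha}$ compatibly with these finitely many level sets yields a finite simplicial complex and a subanalytic homeomorphism onto $\ol{V_\alpha}$ for which each level set is a union of images of open simplices; writing open simplices as $\sigma,\tau$ and their closures as $\ol{\sigma},\ol{\tau}$, one has $\ol{\sigma}=\bigsqcup_{\tau\le\sigma}\tau$, hence $\un_{\ol{\sigma}}=\sum_{\tau\le\sigma}\un_{\tau}$, and M\"{o}bius inversion over the face poset of each simplex gives $\un_{\sigma}=\sum_{\tau\le\sigma}(-1)^{\dim\sigma-\dim\tau}\un_{\ol{\tau}}$, where the image of each closed simplex $\ol{\tau}$ is a compact, contractible, subanalytic subset of $X$. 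Hence $\psi_\alpha$ is a finite $\Z$-linear combination of characteristic functions of compact contractible subanalytic sets contained in $U_\alpha$; summing over $\alpha$ yields the required expression for $\vphi$, and the resulting family is locally finite because $\{W_\alpha\}$ is. The only remaining points, all routine, are that a subanalytic homeomorphism, being proper on the compact piece, sends subanalytic sets to subanalytic sets, and that a homeomorphic image of a closed simplex is contractible.
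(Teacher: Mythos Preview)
Your proposal is correct and follows the approach the paper indicates: the paper does not give a full proof but simply declares the lemma well known, noting only that the implication (b)$\Rightarrow$(d) follows from the triangulation theorem for compact subanalytic subsets. Your argument supplies exactly the details behind that hint---the reduction to compactly supported pieces via a locally finite subanalytic partition, the compatible triangulation, and the M\"obius inversion expressing open simplices in terms of closed ones---so there is nothing to compare beyond the level of detail.
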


\begin{notation}
One denotes by  $\CF(X)$ the group of constructible functions on $X$ and by $\CF_X$ the
presheaf $U\mapsto \CF(U)$. 
\end{notation}
\begin{proposition}
The presheaf  $\CF_X$ is a sheaf on $X$.
\end{proposition}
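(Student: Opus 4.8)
The plan is to exploit that $\CF_X$ is a sub-presheaf of the presheaf $\mathcal{G}_X$ given by $U\mapsto\mathrm{Map}(U,\Z)$, the set of all $\Z$-valued functions on $U$; this presheaf is evidently a sheaf. Consequently the separation axiom is automatic for $\CF_X$, and only the gluing axiom requires an argument. So the task reduces to the following: given an open set $U$, an open covering $U=\bigcup_{i\in I}U_i$, and constructible functions $\vphi_i\in\CF(U_i)$ that agree on pairwise intersections, the unique function $\vphi\in\mathrm{Map}(U,\Z)$ with $\vphi\vert_{U_i}=\vphi_i$ actually belongs to $\CF(U)$, that is, it satisfies conditions (i) and (ii) of Definition~\ref{def:constfct1}.

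For condition (i), I would fix $m\in\Z$ and note that $\opb{\vphi}(m)\cap U_i=\opb{\vphi_i}(m)$, which is subanalytic in $U_i$ since $\vphi_i$ is constructible. Because being subanalytic in a manifold is a local property with respect to open coverings, as recalled in Section~\ref{section:shv}, it follows that $\opb{\vphi}(m)$ is subanalytic in $U$.

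For condition (ii), I would check the local finiteness of the family $\{\opb{\vphi}(m)\}_{m\in\Z}$ pointwise. Given $x\in U$, choose $i$ with $x\in U_i$; constructibility of $\vphi_i$ provides an open neighborhood $W$ of $x$ with $W\subset U_i$ that meets $\opb{\vphi_i}(m)$ for only finitely many $m$. Since $\opb{\vphi}(m)\cap W=\opb{\vphi_i}(m)\cap W$, the same $W$ meets $\opb{\vphi}(m)$ for only finitely many $m$, which is exactly what local finiteness requires.

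Combining (i) and (ii) yields $\vphi\in\CF(U)$, and its uniqueness as a function restricting to the $\vphi_i$ is inherited from $\mathcal{G}_X$; this establishes that $\CF_X$ is a sheaf. I do not expect a genuine obstacle here: the only non-formal ingredient is the locality of subanalyticity for open coverings, which is a standard fact of subanalytic geometry already quoted above, and everything else is a routine verification.
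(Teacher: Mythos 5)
Your proof is correct and follows essentially the same route as the paper: separation is immediate, and gluing is verified by checking that each level set $\opb{\vphi}(m)$ is subanalytic via the locality of subanalyticity and that local finiteness of the family of level sets can be checked on the members of the covering. The only cosmetic difference is that you justify separation by viewing $\CF_X$ as a sub-presheaf of the sheaf of all $\Z$-valued functions, where the paper simply declares it clear.
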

\begin{proof}
(i) Clearly, the presheaf $U\mapsto\CF(U)$ is separated. 

\spa
(ii) Let $X= \bigcup_{a\in A}U_a$  be an open covering of $X$ and let $\vphi$ be a $\Z$-valued function on $X$ such that $\phi\vert_{U_a}$ is constructible on $U_a$.  Since $X$ is paracompact, one may assume that the covering is locally finite.
For $m\in\Z$,  set $Z_m\eqdot\opb{\vphi}(m)$ and $Z_{m,a}= Z_m\cap U_a$. Each $Z_{m,a}$  is subanalytic in $U_a$,
which implies that $Z_m$  is subanalytic in $X$. 
Moreover, the family $\{Z_{m,a}\}_m$ being locally finite in $U_a$, the family $\{Z_m\}_m$ is locally finite in $X$. Hence, $\phi$ is constructible on $X$. The same argument holds when replacing $X$ with an open subset $U\subset X$.
\end{proof}

 Recall now that if $V\in\Derb(\cor)$ has the property that all its cohomology objects are finite dimensional, one defines its 
 Euler-Poincar{\'e} index by
 \eq\label{eq:EPI1}
 &&\chi(V)=\sum_i(-1)^i\dim H^i(V).
 \eneq
For a constructible sheaf $F$, one defines its local  Euler-Poincar{\'e} index at $x\in X$ by 
\eqn
&& \chi_\loc(F)(x)=\sum_i(-1)^i\dim H^i(F_x).
\eneqn 
Clearly, the function $x\mapsto  \chi_\loc(F)(x)$ is constructible and we get a map: 
\eq\label{eq:localEP}
&&\chi_\loc\cl \Ob(\Derb_\Rc(\cor_X))\to\CF(X).
\eneq

Denote by $\BBK(\shc)$ the Grothendieck group of either an abelian or a triangulated category $\shc$,
and recall that if $\shc$ is abelian then $\BBK(\shc)\isoto\BBK(\Derb(\shc))$. Recall that
if $F\cl\shc\to\shc'$ is a triangulated functor (of triangulated categories), then it defines a linear map $\BBK(\shc)\to\BBK(\shc')$.

In the sequel, we set for short
\eqn
&&\BBK_\Rc(\cor_X)\eqdot\BBK(\Derb_\Rc(\cor_X)).
\eneqn 
The tensor product on $\Derb_\Rc(\cor_X)$ defines a ring structure on $\BBK_\Rc(\cor_X)$, with unit the image of the constant sheaf $\cor_X$.
The next  theorem clarifies the notion of constructible function. 

\begin{theorem}[\cite{KS90}*{Th.~9.7.1}]\label{th:Grgroup1}
Let $X$ be a  real analytic manifold. Then the map $\chi_\loc$ defines an isomorphism of commutative unital algebras  \lp we keep the same notation\rp\,
$\chi_\loc\cl \BBK_\Rc(\cor_X)\isoto\CF(X)$.
\end{theorem}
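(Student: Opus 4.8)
The plan is to check that $\chi_\loc$ is a well-defined unital ring homomorphism and then to produce an explicit two-sided inverse at the level of groups; a bijective unital ring homomorphism being an isomorphism of algebras, this will conclude. For well-definedness, given $F\in\Derb_\Rc(\cor_X)$ one chooses a subanalytic stratification $X=\bigsqcup_a X_a$ adapted to $F$; then $\chi_\loc(F)$ is constant on each $X_a$, equal to $c_a(F):=\sum_j(-1)^j\dim_\cor H^j(F_x)$ (any $x\in X_a$), so $\chi_\loc(F)=\sum_a c_a(F)\,\un_{X_a}$ lies in $\CF(X)$ by Lemma~\ref{lem:3.3}. For a distinguished triangle $F'\to F\to F''\to F'[1]$, applying $(\scbul)_x$ and the long exact cohomology sequence gives $\chi_\loc(F)=\chi_\loc(F')+\chi_\loc(F'')$ pointwise, so $\chi_\loc$ factors through $\BBK_\Rc(\cor_X)$. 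Since $(F\ltens G)_x\simeq F_x\ltens G_x$ and $\cor$ is a field, the K\"unneth isomorphism yields $\chi_\loc(F\ltens G)=\chi_\loc(F)\cdot\chi_\loc(G)$, and $\chi_\loc(\cor_X)=\un_X$ is the unit; hence $\chi_\loc$ is a unital ring homomorphism.

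Surjectivity is immediate: by Lemma~\ref{lem:3.3} every $\vphi\in\CF(X)$ is of the form $\sum_i c_i\un_{Z_i}$ for a locally finite family of locally closed subanalytic subsets $Z_i$, and $(\cor_{XZ})_x$ is $\cor$ for $x\in Z$ and $0$ otherwise, so $\chi_\loc(\cor_{XZ})=\un_Z$; thus $\vphi=\chi_\loc\bigl(\bigoplus_i \cor_{XZ_i}^{\oplus c_i}\bigr)$, with a shift $[1]$ inserted wherever $c_i<0$, the direct sum being locally finite and therefore $\R$-constructible.

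For injectivity I would build the inverse $\psi\cl\CF(X)\to\BBK_\Rc(\cor_X)$ by setting $\psi\bigl(\sum_a c_a\un_{X_a}\bigr)=\sum_a c_a\,[\cor_{XX_a}]$ for a locally finite subanalytic stratification $X=\bigsqcup_a X_a$ on which the given function is constant (the right-hand side is again the class of a locally finite direct sum of shifts of the $\cor_{XX_a}$, an honest object of $\Derb_\Rc(\cor_X)$). The point is that $\psi$ does not depend on the chosen stratification and is additive: passing to a common refinement, both reduce to the relation $[\cor_{XZ}]=[\cor_{XU}]+[\cor_{X(Z\setminus U)}]$, $U$ open in $Z$, which holds in $\BBK_\Rc(\cor_X)$ thanks to the distinguished triangle $F_U\to F_Z\to F_{Z\setminus U}\to F_U[1]$. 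One has $\chi_\loc\circ\psi=\id_{\CF(X)}$ by the computation of the previous paragraph. Conversely $\psi\circ\chi_\loc=\id_{\BBK_\Rc(\cor_X)}$: one checks that $\BBK_\Rc(\cor_X)$ is generated by the classes $[\cor_{XZ}]$, since $[F]=\sum_j(-1)^j[H^j(F)]$ reduces the question to sheaves, a constructible sheaf $G$ satisfies $[G]=\sum_a[\reim{j_{X_a}}(G\vert_{X_a})]$ along an adapted stratification (iterate the triangle $G_U\to G\to G_S\to G_U[1]$ for $S$ a closed stratum and $U=X\setminus S$, using $F_Z\simeq\reim{j_Z}\opb{j_Z}F$), and on each stratum one triangulates $X_a$ into contractible subanalytic cells, on which the local system $G\vert_{X_a}$ becomes constant, so that $[\reim{j_{X_a}}(G\vert_{X_a})]$ is a $\Z$-combination of classes $[\cor_{XW}]$. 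Since $\psi(\chi_\loc[\cor_{XZ}])=\psi(\un_Z)=[\cor_{XZ}]$, this gives $\psi\circ\chi_\loc=\id$. Therefore $\chi_\loc$ is bijective, hence an isomorphism of unital algebras.

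The step I expect to be the main obstacle is the well-definedness of $\psi$ together with the generation statement for $\BBK_\Rc(\cor_X)$ — the genuine combinatorial content of the theorem — which rests on the triangulation theorem for subanalytic sets and requires some care in handling locally finite (possibly infinite) stratifications, since $X$ is not assumed compact.
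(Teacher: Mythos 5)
The paper does not actually prove this statement: it is recalled, with a reference to \cite{KS90}*{Th.~9.7.1}, so your proposal has to be measured against that proof, whose overall architecture you reproduce. The first three quarters of your argument are fine and standard: well-definedness of $\chi_\loc(F)$ via an adapted stratification, additivity on distinguished triangles via the long exact sequence of stalks, multiplicativity via the stalkwise K\"unneth formula over the field $\cor$, and surjectivity from $\chi_\loc(\cor_{XZ})=\un_Z$ together with Lemma~\ref{lem:3.3}.

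The genuine gap is in the injectivity half. Your argument hinges on the assertion that $\BBK_\Rc(\cor_X)$ is generated by the classes $[\cor_{XZ}]$, and this is false when $X$ is not compact. Take $X=\R$, $Z_n=[2n,2n+1]$ and $F=\bigoplus_{n\geq1}\cor_{XZ_n}^{\oplus n}$, which lies in $\Derb_\Rc(\cor_X)$ because the family is locally finite with finite-rank stalks. If $[F]$ were a finite combination $\sum_{i=1}^{N}c_i[\cor_{XZ'_i}]$, then applying $\chi_\loc$ (which you have already shown to be well defined on $\BBK_\Rc(\cor_X)$) would force $\chi_\loc(F)=\sum_ic_i\un_{Z'_i}$ to take only finitely many values, whereas it takes the value $n$ on $Z_n$ for every $n$. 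For the same reason, ``iterating'' the triangle $G_U\to G\to G_S\to G_U[1]$ by peeling off one closed stratum at a time never terminates for a locally finite infinite stratification. The repair, which is what the cited proof of \cite{KS90}*{Th.~9.7.1} amounts to, is to use the \emph{finite} filtration by the dimension of the strata of a subanalytic triangulation adapted to $F$: this expresses $[F]$ as a finite sum of classes $[F_{X^{(d)}}]$, where $X^{(d)}$ is the union of the open $d$-simplices; each $H^j(F)\vert_{X^{(d)}}$ is locally constant with contractible components, hence isomorphic \emph{as a sheaf} to a locally finite direct sum of constant sheaves, and if $\chi_\loc(F)=0$ the even-degree and odd-degree parts of this direct sum are isomorphic, giving $[F]=0$. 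The same discipline is needed to make your inverse $\psi$ well defined: all comparisons must go through classes of single locally finite direct-sum objects (and locally finite direct sums of distinguished triangles), never through infinite $\Z$-linear combinations of classes. You correctly flag this as the main obstacle, but as written the generation statement is not merely delicate, it fails, so the injectivity part of the proposal needs to be redone along these lines.
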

Note that if $\chi_\loc(F)=\phi$ and $S\eqdot\supp(\phi)$, then  $\chi_\loc(F_S)=\phi$. Hence, given $\phi\in\CF(X)$, we may always represent $\phi$ with a constructible sheaf of same support. 
We have the general ``principle'' that we shall make explicit in the sequel:
\eqn
&&\parbox{80ex}{
{\em The operations on constructible functions are the image by the local Euler-Poincar{\'e} index $\chi_\loc$ of the corresponding operations on constructible sheaves.}
}\eneqn
In the sequel, we shall also encounter the global  Euler-Poincar{\'e} indices  of a sheaf $F$ (assuming that these indices are finite):
\eq\label{eq:globalEP}
&&\chi(F)=\chi(\rsect(X;F)),\quad \chi_c(F)=\chi(\rsect_c(X;F)).
\eneq
 In particular, for  a locally closed subanalytic subset $Z$ of $X$, we set
\eq\label{eq:globalEP2}
&&\chi(\cor_Z)=\chi(\rsect(Z;\cor_Z)),\quad \chi_c(\cor_Z)=\chi(\rsect_c(Z;\cor_Z)).
\eneq
 Classically,  the Euler-Poincar{\'e} index of a compact subanalytic set $K$  is defined by
\eq\label{eq:clEPI}
&&\chi(K)=\chi(\Q_K).
\eneq

Recall that, denoting by $j\cl Z\into X$ the embedding, $\cor_{XZ}=\eim{j}\cor_Z$. Hence, $\rsect_c(Z;\cor_Z)\simeq\rsect_c(X;\cor_{XZ})$ and if $Z$ is closed,
$\rsect(Z;\cor_Z)\simeq\rsect(X;\cor_{XZ})$  since  $\cor_{XZ}\simeq\oim{j}\cor_Z$ in this case. However,  
$\rsect(Z;\cor_Z)\simeq\rsect(X;\roim{j}\cor_{Z})\neq\rsect(X;\cor_{XZ})$ in general.

 \begin{remark}\label{rem:car0}
Recall that $\cor$ be a field of characteristic $0$. Let $Z$ be a locally closed  subanalytic subset of $X$. Applying the projection formula, we get the isomorphism 
$\rsect_c(Z;\Q_Z)\tens\cor\isoto \rsect_c(Z;\cor_Z)$. Hence
\eq\label{eq:car01}
&&\chi_c(\cor_Z)=\chi_c(\Q_Z).
\eneq
\end{remark}

\subsection{Operations}
\subsubsection*{Internal operations}
The sum on $\CF(X)$ is the image by $\chi_\loc$ of the direct sum for sheaves, the unit $\un_X$   is the image of the constant sheaf $\cor_X$, the map $\phi\mapsto -\phi$ corresponds to the shift $F\mapsto F\,[+1]$ and the usual product on $\CF(X)$ is the image of the tensor product.

\subsubsection*{External product}
 For two real analytic manifolds $X$ and $Y$, one defines the morphism
 \eq\label{eq:exterfct}
 &&\etens\cl \CF_X \etens \CF_Y \to \CF_{X\times Y},\quad  (\vphi \etens \psi) (x,y) = \vphi(x)\psi(y).
 \eneq
 
 \subsubsection*{Inverse image or composition}
Let $f \cl X\to Y$ be a morphism of  real analytic manifolds. One defines the inverse image morphism
\eq\label{eq:inversfct}
&& f^*\cl  \opb{f}\CF_Y \to \CF_X,\quad (f^*\psi)(x) = \psi(f(x))\mbox{ for }\psi\in  \CF(Y).
\eneq
(Recall that a morphism $\opb{f}\CF_Y \to \CF_X$ is nothing but a morphism $\CF_Y \to \oim{f}\CF_X$.)

 Inverse images are functorial, that is, if $f\cl X\to Y$ and $g\cl Y \to Z$ are
morphisms of manifolds, then: 
\eqn
&&f^*\circ g^* = (g\circ f)^*.
\eneqn

 \subsubsection*{Direct image or integral}
Recall that, if $K$ is a subanalytic compact subset of $X$, then the   Euler-Poincar\'e index $\chi(K)$ is 
defined in~\eqref {eq:clEPI}.
In particular, if $K$ is contractible, then $\chi(\cor_K)=1$ and one sets in this case
\eq\label{eq:chiK}
&&\int_X\mbox{\bf 1}_K =1.
\eneq
If $\vphi$ has compact support, one may assume that the sum in Lemma~\ref{lem:3.3}~(d)
 is finite, and one checks (using either Theorem~\ref{th:Grgroup1} or the triangulation theorem for subanalytic sets) that the
integer $\sum_i c_i$ depends only on $\vphi$, not on its decomposition. One sets:
\eqn
&& \int_X \vphi = \sum_i c_i.
\eneqn
In particular, if $Z$ is locally closed relatively compact and subanalytic in $X$, then (see~\eqref{eq:globalEP2}):
\eq\label{eq:EPI3}
&&\int_X\un_Z=\chi_c(\cor_Z).
\eneq

By~\eqref{eq:car01}, this integer does not depend on the choice of $\cor$ as soon as $\cor$ has characteristic $0$.

One  should be aware that the integral is not positive, that is
\eqn
&&\vphi\geq0 \mbox{ does not imply } \int_X\vphi\geq0.
\eneqn
For example, take $X=\R$ and $\vphi={\bf 1}_{(-1,1)}$. Hence, $\vphi\geq0$ and $\int_\R\vphi=-1$. 

 Let $f \cl X\to Y$ be a morphism of  real analytic manifolds. One defines the direct image morphism
 \eq\label{eq:intcst}
 &&\int_X\cl \eim{f}\CF_X\to\CF_Y,\quad \bl\int_f \vphi\br(y) = \int_X{\bf 1}_{\opb{f}(y)}\cdot\phi.
 \eneq
Recall that a section of $\eim{f} \CF_X$ on an open subset $V\subset Y$ is a section of $\CF_X(\opb{f}V)$ such that $f$ is proper on its support. Hence the integral makes sense as a function but it is not obvious that it is a constructible function. 
This follows for example from the corresponding result for direct images of constructible sheaves. Indeed, let $F\in\Derb_\Rc(\cor_X)$ be such that
$\chi_\loc(F)=\phi$ and $\supp(F)=\supp(\phi)$. Then $\int_f \vphi=\chi_\loc(\reim{f}F)$. 

Direct images are functorial, that is, if $f\cl X\to Y$ and $g\cl Y \to Z$ are
morphisms of manifolds, then: 
\eqn
&&\int_g\conv\int_f=\int_{g\conv f}.
\eneqn

 \subsubsection*{Duality}
On $X$, the dual  of a constructible function is the image by $\chi_\loc$ of the duality functor $\RD_X$ for sheaves. 
For $F\in\Derb(\cor_X)$ and $x_0\in X$, one has
\eqn
&&(\RD_X F)_{x_0}\simeq (\rsect_{\{x_0\}}(F))^*,
\eneqn
 where ${}^*$ denotes the  duality functor for  $\cor$-vector spaces. Since $F$ is constructible, there exists a local chart and $\epsilon_0>0$ such that,
denoting by  $B_\epsilon(x_0)$ the open ball with center $x_0$ and radius $\epsilon > 0$ in this chart, one has for $0<\epsilon\leq\epsilon_0$:
\eqn
&&\rsect_{\{x_0\}}(F)\simeq\rsect_c(B_\epsilon(x_0);F)\simeq \reim{a_X}(F\tens\cor_{B_\epsilon(x_0)}).
\eneqn
Hence, one defines the dual of a constructible function $\vphi$ on $X$ as follows. Let $x_0 \in X$, and
choose a local chart in a neighborhood of $x_0$ and $\epsilon>0$ as above. One sets
\eq\label{eq:dual}
&& (\RD_X\vphi) (x_0)=\int_X\vphi\cdot {\bf 1}_{B_\epsilon(x_0)}.
\eneq
The integral  $\int_X\vphi\cdot {\bf 1}_{B_\epsilon(x_0)}$ neither depends on the local chart nor on $\epsilon$, for
$0<\epsilon \leq\epsilon_0$, for some $\epsilon_0>0$ depending on $x_0$. 

We get a morphism of sheaves $\RD_X\cl \CF_X\to\CF_X$ and this morphism is  an involution, that is, 
 \eqn
 &&\RD_X\circ \RD_X \simeq \id_X. 
 \eneqn
Moreover, duality  commutes with integration. Assuming that $f$ is proper on the support of $\phi$, one has:
 \eq\label{eq:dualint}
&&\RD_Y(\int_f\vphi) = \int_f \RD_X(\vphi).
\eneq
 By mimicking a classical formula for constructible sheaves, one sets
 \eq\label{eq:hom cstfct}
 &&hom(\phi,\psi)\eqdot\RD_X(\RD_X\psi\cdot\phi).
 \eneq
\begin{example}\label{exa:dualA}
Let $Z$ be a  closed subanalytic subset of $X$ and assume that $Z$ is a $C^0$-manifold of dimension $d$ with boundary 
$\partial Z$. Set $A=Z\setminus\partial Z$. Hence, locally on $X$,  $Z\subset X$ is topologically isomorphic to $\ol U\subset\R^n$ where $U$ is a convex open subset of $\R^d\subset\R^n$ and $A\simeq U$. We thus have
\eq\label{eq:dualofone}
&&\RD_X\un_Z=(-1)^d\un_{A}
\eneq
Moreover
\eqn
&&\int_X\un_{\partial Z}=\int_X\un_{Z}-\int_X\un_A=(1-(-1)^d)\int_X\un_{Z}.
\eneqn
When $Z$ is a closed convex polyhedron, one recovers the classical Euler formula.
\end{example}

\subsubsection*{Other operations}
In fact, most (if not all) operations on constructible sheaves admit a counterpart in the language of constructible functions. In~\cite{KS90}*{Def.~9.7.8} one defines the specialization  $\nu_M$ along a submanifold $M$, its Fourier-Sato transform, the microlocalization $\mu_M$ and  $\muhom$:
 \eqn
 &&\nu_M\cl \CF(X)\to\CFR(T_MX), \quad\mu_M\cl \CF(X)\to\CFR(T^*_MX)\\
&&\hspace{12ex} \muhom\cl \CF(X)\times\CF(X)\to\CFR(T^*X). 
\eneqn
Here, for a vector bundle $E\to M$, one denotes by $\CFR(E)$ the subspace of $\CF(E)$ consisting of functions  constant on the orbits of the $\R^+$-action.

One can also define the micro-support of $\phi\in\CF(X)$ by setting
\eq
&&\SSi(\phi)=\supp(\muhom(\phi,\phi)).
\eneq

\section{Constructible functions  up to infinity}\label{section:fctinfty}

\subsection{Definitions}

\begin{definition}\label{def:constinfty}
Let $\fX=(X,\bX)$ be a b-analytic manifold.
\banum
\item
A function $\vphi \cl X\to \Z$ is constructible up to infinity, or b-constructible for short,  if:
\begin{itemize}
\item [(i)] for all $m \in \Z, \opb{\vphi}(m)$ is subanalytic up to infinity,
\item [(ii)] the family $\{\opb{\vphi}(m)\}_{m\in \Z}$ is  finite.
\end{itemize}
\item
We denote by $\CF(\fX)$ the space of functions on $X$ constructible up to infinity.
\item
For any function $\phi$ on $X$,  we denote by  $\eim{j_X}\phi$ the function on $\bX$ obtained as  the function $\phi$ on $X$ extended  by $0$ on $\bX\setminus X$.
\eanum
\end{definition}

\begin{lemma}\label{lem:cstonVi}
Let $\vphi\in\CF(X)$. The conditions below are equivalent.
\banum
\item
The function $\vphi$ is constructible up to infinity,
\item
The function $\eim{j_X}\phi$  belongs to $\CF(\bX)$.
\item
There exists $\psi\in\CF(\bX)$ such that $\phi=\psi\vert_X$. 
\item
There exist a  finite family of   locally closed b-subanalytic  subsets 
$\{Z_i\}_{i\in I}$  and $c_i \in \Z$ such that $ \vphi = \sum_i c_i {\bf 1}_{Z_i}$.
\eanum
\end{lemma}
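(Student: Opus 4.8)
The plan is to prove the cycle of implications (a)$\Rightarrow$(b)$\Rightarrow$(c)$\Rightarrow$(a), together with (a)$\Leftrightarrow$(d), reducing everything to the classical Lemma~\ref{lem:3.3} applied on the compact manifold $\bX$ and to the stability properties of b-subanalytic sets recorded in Section~\ref{subsectFP}. The key observation throughout is that since $X$ is relatively compact in $\bX$, local finiteness of a family of subanalytic subsets of $\bX$ is automatically \emph{global} finiteness; this is exactly what bridges Definition~\ref{def:constinfty}(ii) and the classical Definition~\ref{def:constfct1}.

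First I would prove (a)$\Rightarrow$(b). Assume $\vphi$ is b-constructible, and set $\psi=\eim{j_X}\phi$ on $\bX$. For $m\neq 0$ we have $\opb{\psi}(m)=\opb{\vphi}(m)$, which is subanalytic in $\bX$ by hypothesis (i); for $m=0$ we have $\opb{\psi}(0)=\opb{\vphi}(0)\cup(\bX\setminus X)$, which is subanalytic in $\bX$ since $X$ is subanalytic in $\bX$ and $\shs_{\bX}$ is a Boolean algebra. The family $\{\opb{\psi}(m)\}_{m\in\Z}$ is finite by hypothesis (ii), hence in particular locally finite in $\bX$, so $\psi\in\CF(\bX)$. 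Next, (b)$\Rightarrow$(c) is immediate: take $\psi=\eim{j_X}\phi$ and note $\psi\vert_X=\phi$. For (c)$\Rightarrow$(a): given $\psi\in\CF(\bX)$ with $\psi\vert_X=\phi$, for each $m$ the set $\opb{\vphi}(m)=\opb{\psi}(m)\cap X$ is the intersection of a subanalytic subset of $\bX$ with the subanalytic subset $X$, hence subanalytic in $\bX$, i.e.\ b-subanalytic; this gives (i). For (ii), the family $\{\opb{\psi}(m)\}_{m\in\Z}$ is locally finite in $\bX$ and $\bX$ is compact (or: $\ol X$ is compact), so only finitely many members meet $\ol X$; a fortiori only finitely many meet $X$, and only finitely many of the $\opb{\vphi}(m)$ are nonempty. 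Thus $\vphi\in\CF(\fX)$.

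It remains to handle (d). For (d)$\Rightarrow$(a), if $\vphi=\sum_{i\in I}c_i\un_{Z_i}$ with $I$ finite and each $Z_i$ locally closed b-subanalytic, then each fiber $\opb{\vphi}(m)$ is obtained from the $Z_i$ by finitely many Boolean operations, hence is b-subanalytic by the stability properties listed after Definition~\ref{def:sainfty}; and the family of fibers is finite since $\vphi$ takes only finitely many values. For (a)$\Rightarrow$(d), apply Lemma~\ref{lem:cstonVi}(b): $\psi=\eim{j_X}\phi\in\CF(\bX)$, so by Lemma~\ref{lem:3.3}(b) applied on $\bX$ there is a locally finite family of locally closed subanalytic subsets $\{W_i\}_{i\in I}$ of $\bX$ and integers $c_i$ with $\psi=\sum_{i\in I}c_i\un_{W_i}$. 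Since $\supp(\psi)\subset\ol X$ is compact, one may discard all $W_i$ not meeting $\ol X$ and assume $I$ finite. Restricting to $X$, set $Z_i=W_i\cap X$; each $Z_i$ is locally closed in $X$ and subanalytic in $\bX$ (intersection of a subanalytic subset of $\bX$ with $X$), hence locally closed b-subanalytic, and $\phi=\psi\vert_X=\sum_{i\in I}c_i\un_{Z_i}$. This yields (d).

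The only genuinely delicate point is the reduction ``discard the $W_i$ not meeting $\ol X$ to make $I$ finite'' in (a)$\Rightarrow$(d): one must be slightly careful that finitely many $W_i$ meet the \emph{compact} set $\ol X$, which uses local finiteness of $\{W_i\}$ together with compactness of $\ol X$, and that restricting the sum to those indices does not change the value of $\psi$ on $X$ (it does not, since the dropped terms vanish on a neighborhood of $\ol X$, in particular on $X$). Every other step is a routine application of the Boolean-algebra properties of $\shs_{\bX}$ and of b-subanalytic sets, plus the observation that relative compactness of $X$ in $\bX$ turns local finiteness into finiteness. I would also remark in passing that the equivalence (a)$\Leftrightarrow$(b) is the function-theoretic shadow of the sheaf-theoretic equivalence (c)$\Leftrightarrow$(a) in Lemma~\ref{le:KS21}, via $\chi_\loc$, which makes the result expected.
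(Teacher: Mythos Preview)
Your proof is correct and follows essentially the same route as the paper's: the cycle (a)$\Rightarrow$(b)$\Rightarrow$(c)$\Rightarrow$(a) together with the link to (d), all resting on the Boolean-algebra stability of subanalytic sets in $\bX$ and on the relative compactness of $X$ turning local finiteness into finiteness. The only cosmetic difference is that the paper connects (d) via (c)$\Rightarrow$(d) (using the level sets of $\psi$) and (d)$\Rightarrow$(b), whereas you connect it via (a)$\Leftrightarrow$(d) using Lemma~\ref{lem:3.3} on $\bX$; your route is arguably a touch cleaner since the level sets in the paper's (c)$\Rightarrow$(d) are not \emph{a priori} locally closed without an extra stratification step.
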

\begin{proof}
(a)$\Rightarrow$(b).   By the hypothesis, one may write $\phi=\sum_ic_i\un_{Z_i}$ where the sum is finite and the $Z_i$'s are 
 subanalytic up to infinity. Therefore, $\un_{Z_i}\in\CF(\bX)$ and the result follows from Lemma~\ref{lem:3.3}.
  
\spa
(b)$\Rightarrow$(c) is obvious. 

\spa
(c)$\Rightarrow$(d) and (c)$\Rightarrow$(a).
By definition, for each $m\in\Z$, $Z_m\eqdot\opb{\psi}(m)$ is subanalytic in $\bX$ and the family $\{\Z_m\}_m$ is locally finite. 
Therefore, $Z_m\cap X$ is subanalytic in $X$ and $X$ being relatively compact, the family  $\{X\cap Z_m\}_m$ is finite.

\spa
(d)$\Rightarrow$(b) is obvious.
\end{proof}

Clearly, $\CF(\Xinf)$ is a subalgebra of $\CF(X)$. 

Let us denote by $\CF_\fX$ the presheaf on $\Xsai$ given by $U\mapsto \CF(U_\infty)$.
\begin{proposition}
The presheaf $\CF_\fX$ is a sheaf on $\Xsai$.
\end{proposition}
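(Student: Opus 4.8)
The plan is to verify the two sheaf axioms directly on the site $\Xsai$, using crucially the fact that, by the very definition of the Grothendieck topology on $\Op_{\Xsai}$, every covering of an object admits a \emph{finite} subcovering. First I would record that $\CF_\fX$ is indeed a presheaf: if $\phi\in\CF(U_\infty)$ and $V\in\Op_{\Xsai}$ with $V\subset U$, then for each $m\in\Z$ the set $\opb{(\phi\vert_V)}(m)=\opb{\phi}(m)\cap V$ is subanalytic in $\bX$ (an intersection of two subsets subanalytic in $\bX$, $V$ being one by hypothesis), and $\phi\vert_V$ takes only finitely many values; hence $\phi\vert_V\in\CF(V_\infty)$ by Definition~\ref{def:constinfty}, the restriction maps are well defined, and their functoriality is obvious.

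Next I would fix a covering $\{U_i\}_{i\in I}$ of $U\in\Op_{\Xsai}$ together with a finite $J\subset I$ such that $U=\bigcup_{j\in J}U_j$. For separatedness: if $\phi,\phi'\in\CF(U_\infty)$ satisfy $\phi\vert_{U_i}=\phi'\vert_{U_i}$ for all $i\in I$, then in particular $\phi$ and $\phi'$ agree on each $U_j$, $j\in J$, hence on $U$, so $\phi=\phi'$. For gluing: let $(\phi_i)_{i\in I}$ be a compatible family, $\phi_i\in\CF((U_i)_\infty)$ with $\phi_i\vert_{U_i\cap U_j}=\phi_j\vert_{U_i\cap U_j}$ (note $U_i\cap U_j\in\Op_{\Xsai}$, this category being stable under finite intersection). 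Define $\phi\cl U\to\Z$ by $\phi(x)=\phi_i(x)$ for any $i$ with $x\in U_i$; this is well defined by the compatibility, and $\phi\vert_{U_i}=\phi_i$ for every $i\in I$. It remains to check $\phi\in\CF(U_\infty)$. For each $m\in\Z$ one has $\opb{\phi}(m)=\bigcup_{j\in J}\bigl(\opb{\phi}(m)\cap U_j\bigr)=\bigcup_{j\in J}\opb{(\phi_j)}(m)$, a finite union of subsets subanalytic in $\bX$, hence subanalytic in $\bX$: this is condition~(i) of Definition~\ref{def:constinfty}(a). Moreover $\phi(U)=\bigcup_{j\in J}\phi_j(U_j)$ is finite, since each $\phi_j$ takes finitely many values and $J$ is finite: this is condition~(ii). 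Hence $\phi\in\CF(U_\infty)$, and the gluing axiom holds; together with Lemma~\ref{lem:cstonVi} one may also note that the glued function coincides with $(\,\sum_j c_j\un_{Z_j}\,)\vert_U$ for a suitable finite b-subanalytic decomposition, if such a description is wanted.

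I do not expect any genuine obstacle. The only point at which the analogue of this statement for the ordinary subanalytic topology on $X$ could fail is the finiteness condition~(ii) of Definition~\ref{def:constinfty}(a), and the Grothendieck topology on $\Xsai$ was tailored precisely so that every covering is finite — as remarked right after the definition of $\Xsai$ — which is exactly what is used in the gluing step. Accordingly the only (modest) task is to organize the bookkeeping with the chosen finite subcover $J$, everything else being formal.
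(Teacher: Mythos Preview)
Your proposal is correct and is exactly the kind of argument the paper has in mind: the paper does not give a proof at all, stating only that ``the proof is straightforward.'' Your verification of separatedness and gluing via a finite subcover $J\subset I$, together with the observation that $\opb{\phi}(m)=\bigcup_{j\in J}\opb{(\phi_j)}(m)$ is a finite union of subsets subanalytic in $\bX$ and that $\phi$ takes finitely many values, is the natural way to fill in the details and mirrors the proof given earlier in the paper for the analogous statement that $\CF_X$ is a sheaf on the ordinary topology.
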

The proof is straightforward.

Recall Theorem~\ref{th:Grgroup1} and denote now by  $\BBK_\Rc(\cor_{\fX})$ the Grothendieck group of the category $\Derb_\Rc(\cor_{\fX})$.

\begin{theorem}\label{th:Grgroup2}
The isomorphism of commutative unital algebras $\chi_\loc\cl\BBK_\Rc(\cor_X)\isoto\CF(X)$ induces an isomorphism
$\chi_\loc\cl \BBK_\Rc(\cor_{\fX})\isoto\CF(\fX)$.
\end{theorem}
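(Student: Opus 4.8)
The plan is to reduce Theorem~\ref{th:Grgroup2} to Theorem~\ref{th:Grgroup1} by exhibiting $\BBK_\Rc(\cor_{\fX})$ and $\CF(\fX)$ as compatible sub-objects of $\BBK_\Rc(\cor_X)$ and $\CF(X)$ under the isomorphism $\chi_\loc$. First I would observe that since $\Derb_\Rc(\cor_{\fX})$ is a full triangulated subcategory of $\Derb_\Rc(\cor_X)$, the inclusion induces a group homomorphism $\BBK_\Rc(\cor_{\fX})\to\BBK_\Rc(\cor_X)$, and similarly $\CF(\fX)\hookrightarrow\CF(X)$ is the inclusion of a subalgebra. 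The Euler--Poincar\'e index $\chi_\loc$ is compatible with restriction to open sets and with stalks, so it sends $\Derb_\Rc(\cor_{\fX})$ into $\CF(\fX)$: indeed if $F\in\Derb_\Rc(\cor_{\fX})$, then by Lemma~\ref{le:KS21} $\roim{j_X}F\in\Derb_\Rc(\cor_{\bX})$, hence $\chi_\loc(\roim{j_X}F)\in\CF(\bX)$, and its restriction to $X$ is $\chi_\loc(F)$ (restriction commutes with $\chi_\loc$ and $\opb{j_X}\roim{j_X}F\simeq F$); by Lemma~\ref{lem:cstonVi}(c) this places $\chi_\loc(F)$ in $\CF(\fX)$. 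So we get a commutative square of group homomorphisms with the vertical arrows injective and the bottom arrow $\chi_\loc\cl\BBK_\Rc(\cor_X)\isoto\CF(X)$ an isomorphism, whence the top arrow $\chi_\loc\cl\BBK_\Rc(\cor_{\fX})\to\CF(\fX)$ is automatically injective. It is a ring homomorphism because the tensor product on $\Derb_\Rc(\cor_{\fX})$ is the restriction of that on $\Derb_\Rc(\cor_X)$ (Proposition~\ref{pro:opersainftyint}(ii)) and $\chi_\loc$ is multiplicative; it is unital since $\cor_X$ is b-constructible.

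The only real content is surjectivity. Given $\vphi\in\CF(\fX)$, by Lemma~\ref{lem:cstonVi}(d) we may write $\vphi=\sum_{i\in I}c_i\un_{Z_i}$ with $I$ \emph{finite} and each $Z_i$ locally closed and b-subanalytic, i.e.\ subanalytic in $\bX$. For a locally closed subanalytic $Z_i\subset\bX$ the sheaf $\cor_{XZ_i}$ (in the notation $\cor_{XZ}\eqdot\cor_X\tens\cor_{XZ}$ of Section~\ref{section:shv}, for $Z_i$ viewed inside $\bX$ and then cut by $X$) has micro-support contained in a closed conic subanalytic subset of $T^*\bX$ — more precisely $\cor_{\bX Z_i}\in\Derb_\Rc(\cor_{\bX})$ since $Z_i$ is subanalytic in $\bX$, and $\opb{j_X}\cor_{\bX Z_i}\simeq\cor_{XZ_i}$ lies in $\Derb_\Rc(\cor_{\fX})$ by the remark following Definition~\ref{def:shvconstinfty}. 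Hence $F\eqdot\bigoplus_{i\in I}\cor_{XZ_i}^{\oplus c_i}$ (reading a negative multiplicity as a shift by $[1]$, as recorded in the discussion of internal operations) is an object of $\Derb_\Rc(\cor_{\fX})$, and $\chi_\loc(F)=\sum_i c_i\chi_\loc(\cor_{XZ_i})=\sum_i c_i\un_{Z_i}=\vphi$. This proves surjectivity, so $\chi_\loc\cl\BBK_\Rc(\cor_{\fX})\to\CF(\fX)$ is a bijective unital ring homomorphism, hence an isomorphism of commutative unital algebras.

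The main obstacle — and it is a mild one — is keeping the bookkeeping between $X$ and $\bX$ straight: one must check that "b-constructible" objects are exactly those obtained by restricting $\Derb_\Rc(\cor_{\bX})$-objects (this is the content of Lemma~\ref{le:KS21}, conditions (c)/(d), together with the remark after Definition~\ref{def:shvconstinfty}), and that $\chi_\loc$ commutes with $\opb{j_X}$, which is immediate since $\chi_\loc$ is defined stalkwise and $\opb{j_X}$ is restriction of stalks. Everything else is a transparent transfer of Theorem~\ref{th:Grgroup1} along these identifications, so the proof is short.

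\begin{proof}
Since $\Derb_\Rc(\cor_{\fX})$ is a full triangulated subcategory of $\Derb_\Rc(\cor_X)$, the inclusion induces a ring homomorphism $\BBK_\Rc(\cor_{\fX})\to\BBK_\Rc(\cor_X)$, and $\CF(\fX)$ is a subalgebra of $\CF(X)$. The functor $\chi_\loc$ is computed stalkwise, so it commutes with $\opb{j_X}$; if $F\in\Derb_\Rc(\cor_{\fX})$ then $\roim{j_X}F\in\Derb_\Rc(\cor_{\bX})$ by Lemma~\ref{le:KS21}, hence $\psi\eqdot\chi_\loc(\roim{j_X}F)\in\CF(\bX)$ and $\psi\vert_X=\chi_\loc(\opb{j_X}\roim{j_X}F)=\chi_\loc(F)$, so $\chi_\loc(F)\in\CF(\fX)$ by Lemma~\ref{lem:cstonVi}. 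We thus obtain a commutative square
\eqn
&&\xymatrix{
\BBK_\Rc(\cor_{\fX})\ar[r]^-{\chi_\loc}\ar@{^{(}->}[d]&\CF(\fX)\ar@{^{(}->}[d]\\
\BBK_\Rc(\cor_X)\ar[r]_-{\chi_\loc}^-{\sim}&\CF(X)
}
\eneqn
in which the vertical maps are injective and the bottom map is the isomorphism of Theorem~\ref{th:Grgroup1}. Hence the top map is an injective ring homomorphism, unital since $\cor_X\in\Derb_\Rc(\cor_{\fX})$ and $\chi_\loc(\cor_X)=\un_X$.

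It remains to prove surjectivity. Let $\vphi\in\CF(\fX)$. By Lemma~\ref{lem:cstonVi}~(d) we may write $\vphi=\sum_{i\in I}c_i\un_{Z_i}$ with $I$ finite, $c_i\in\Z$ and each $Z_i$ a locally closed subset of $X$ which is subanalytic in $\bX$. Then $\cor_{\bX Z_i}\in\Derb_\Rc(\cor_{\bX})$, so by the remark following Definition~\ref{def:shvconstinfty}, $\cor_{XZ_i}\simeq\opb{j_X}\cor_{\bX Z_i}\in\Derb_\Rc(\cor_{\fX})$. Set
\eqn
&&F=\bigoplus_{i\in I_+}\cor_{XZ_i}^{\,\oplus c_i}\,\oplus\,\bigoplus_{i\in I_-}\bl\cor_{XZ_i}[1]\br^{\,\oplus(-c_i)},
\eneqn
where $I_\pm=\{i\in I;\ \pm c_i>0\}$. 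Then $F\in\Derb_\Rc(\cor_{\fX})$ and $\chi_\loc(F)=\sum_{i\in I}c_i\,\chi_\loc(\cor_{XZ_i})=\sum_{i\in I}c_i\un_{Z_i}=\vphi$. Therefore $\chi_\loc\cl\BBK_\Rc(\cor_{\fX})\to\CF(\fX)$ is a bijective unital ring homomorphism, hence an isomorphism of commutative unital algebras.
\end{proof}
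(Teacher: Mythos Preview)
Your surjectivity argument and your check that $\chi_\loc$ lands in $\CF(\fX)$ match the paper's (you use $\roim{j_X}$ where the paper uses $\eim{j_X}$, but either works via Lemma~\ref{le:KS21}). The gap is in injectivity. You assert that the left vertical $\BBK_\Rc(\cor_{\fX})\to\BBK_\Rc(\cor_X)$ is injective and then read off injectivity of the top arrow from the square. But injectivity of that vertical does not follow from $\Derb_\Rc(\cor_{\fX})$ being a full triangulated subcategory of $\Derb_\Rc(\cor_X)$: in general the inclusion of a full (even thick) triangulated subcategory need not be injective on Grothendieck groups (e.g.\ the bounded derived category of finite torsion abelian groups inside that of finitely generated abelian groups, where every torsion class maps to~$0$). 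Without that injection, your square gives you nothing about the top map beyond what you already knew.

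The paper does not try to compare with $\BBK_\Rc(\cor_X)$ for this step; it simply observes that the injectivity argument in the proof of~\cite{KS90}*{Th.~9.7.1} applies verbatim to $\Derb_\Rc(\cor_{\fX})$. Concretely, that argument chooses a common stratification adapted to the sheaves in question and reduces to a computation on strata; here one may take the stratification to be b-subanalytic. If you want to salvage your diagrammatic approach you would need a projector $\Derb_\Rc(\cor_X)\to\Derb_\Rc(\cor_{\fX})$ (compare the proof of Theorem~\ref{th:Grgroup3}, where Lemma~\ref{le:projgamma} supplies exactly that), but no such projector is available here, so the direct route is the right one.
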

\begin{proof}
(i) The map $\chi_\loc$ takes its values in $\CF(\fX)$. Indeed, for $F\in \Derb_\Rc(\cor_{\fX})$, $\chi_\loc(F)=j_X^*(\chi_\loc(\eim{j_X}F))$. 

\spa
(ii) The map $\chi_\loc\cl \BBK_\Rc(\cor_{\fX})\to\CF(\fX)$ is injective by the same arguments as in the proof 
of~\cite{KS90}*{Th.~9.7.1}.

\spa
(iii) The map $\chi_\loc$ is surjective since for $Z$ locally closed and subanalytic up to infinity, $\un_Z=\chi_\loc(\cor_Z)$ and $\cor_Z$ is constructible up to infinity.
\end{proof}

\subsection{Operations}\label{section:cftinfty}

\begin{lemma}
If $\phi\in\CF(\fX)$, then $\RD_X\phi\in\CF(\fX)$.
\end{lemma}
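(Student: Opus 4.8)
The statement asserts that the duality functor $\RD_X$ preserves b-constructibility. The cleanest route is to reduce to what we already know about sheaves, via Theorem~\ref{th:Grgroup2}, which identifies $\CF(\fX)$ with the Grothendieck group $\BBK_\Rc(\cor_{\fX})$ through $\chi_\loc$, and via Proposition~\ref{pro:opersainftyint}~(ii), which tells us that $\RD_X$ sends $\Derb_\Rc(\cor_{\fX})$ into itself.

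\textbf{Key steps.} First I would recall the general principle stated before Section~\ref{section:cstfct} (and used repeatedly above): the duality operation on constructible functions is, by definition, the image under $\chi_\loc$ of the sheaf-theoretic duality functor $\RD_X$, i.e.\ for $F\in\Derb_\Rc(\cor_X)$ with $\chi_\loc(F)=\phi$ one has $\chi_\loc(\RD_X F)=\RD_X\phi$. Next, given $\phi\in\CF(\fX)$, use the surjectivity in Theorem~\ref{th:Grgroup2} (equivalently, Lemma~\ref{lem:cstonVi}~(d) together with $\un_Z=\chi_\loc(\cor_Z)$) to choose $F\in\Derb_\Rc(\cor_{\fX})$ with $\chi_\loc(F)=\phi$. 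Then $\RD_X F$ belongs to $\Derb_\Rc(\cor_{\fX})$ by Proposition~\ref{pro:opersainftyint}~(ii), and $\chi_\loc(\RD_X F)=\RD_X\phi$ by the preceding step. Since $\chi_\loc$ takes $\Derb_\Rc(\cor_{\fX})$ into $\CF(\fX)$ (step (i) of the proof of Theorem~\ref{th:Grgroup2}), we conclude $\RD_X\phi\in\CF(\fX)$.

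\textbf{Alternative, more hands-on route.} One could also argue directly from Lemma~\ref{lem:cstonVi}~(a)$\Leftrightarrow$(d): write $\phi=\sum_{i\in I}c_i\un_{Z_i}$ with $I$ finite and each $Z_i$ locally closed and b-subanalytic, and use linearity of $\RD_X$ to reduce to $\phi=\un_Z$ with $Z$ locally closed b-subanalytic. Writing $Z=U\cap S$ with $U$ open and $S$ closed, both b-subanalytic (possible by the remark after Definition~\ref{def:sainfty} on stability of the b-subanalytic class, mirroring the subanalytic case), one computes $\RD_X\un_Z$ from the sheaf identity $\RD_X\cor_Z$ together with the known b-constructibility of $\cor_Z\in\Derb_\Rc(\cor_{\fX})$. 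This again lands in $\CF(\fX)$ after applying $\chi_\loc$.

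\textbf{Main obstacle.} There is essentially no hard step here: everything has been set up so that the lemma is a one-line corollary. The only point requiring a moment's care is making sure the representing sheaf $F$ can be taken b-constructible (not merely $\R$-constructible), which is exactly the content of the surjectivity statement in Theorem~\ref{th:Grgroup2}; and observing that $\chi_\loc$ intertwines the two duality operations, which is by definition of duality on constructible functions.
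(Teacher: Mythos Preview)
Your proof is correct but takes a different route from the paper. You go through the Grothendieck-group isomorphism of Theorem~\ref{th:Grgroup2} together with the sheaf-level stability of $\RD_X$ from Proposition~\ref{pro:opersainftyint}~(ii): lift $\phi$ to some $F\in\Derb_\Rc(\cor_{\fX})$, dualize at the sheaf level, and come back down via $\chi_\loc$. The paper instead uses the characterization of Lemma~\ref{lem:cstonVi}~(c): since $\phi$ is the restriction to $X$ of some $\psi\in\CF(\bX)$, and since the defining formula~\eqref{eq:dual} for $\RD_X$ is local (so duality commutes with restriction to an open subset), one has $\RD_X\phi=(\RD_{\bX}\psi)\vert_X$, which is again a restriction from $\CF(\bX)$ and hence b-constructible by the same lemma. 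Your argument is the systematic one---it follows the ``general principle'' of Section~\ref{section:cstfct} verbatim and applies uniformly to any operation known to preserve $\Derb_\Rc(\cor_{\fX})$---while the paper's is a one-line shortcut that avoids both the Grothendieck-group machinery and the micro-support estimate underlying Proposition~\ref{pro:opersainftyint}.
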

\begin{proof}
The result follows from Lemma~\ref{lem:cstonVi}~(c) since duality commutes with restriction to an open subset. 
\end{proof}

Let  $\phi\in\CF(\fX)$. 
One sets 
\eq
&&\oim{j_X}\phi=\RD_{\bX}\eim{j_X}\RD_X\phi.\label{eq:oimj}
\eneq

The next result follows from the corresponding result for sheaves.
\begin{lemma}\label{le:eimjoimj}
If $\phi\in\CF(\fX)$ has compact support in $X$, then $\oim{j_X}\phi=\eim{j_X}\phi$.
\end{lemma}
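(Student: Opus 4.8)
The statement to prove is Lemma~\ref{le:eimjoimj}: if $\phi\in\CF(\fX)$ has compact support in $X$, then $\oim{j_X}\phi=\eim{j_X}\phi$, where $\oim{j_X}\phi$ is defined by \eqref{eq:oimj} as $\RD_{\bX}\eim{j_X}\RD_X\phi$. The plan is to follow the hint — ``the next result follows from the corresponding result for sheaves'' — and realize $\phi$ as the local Euler--Poincaré index of a constructible sheaf, then transport the sheaf-theoretic identity $\roim{j_X}F\simeq\eim{j_X}F$ for sheaves with proper support through $\chi_\loc$.

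First I would pick, using Theorem~\ref{th:Grgroup2}, a sheaf $F\in\Derb_\Rc(\cor_{\fX})$ with $\chi_\loc(F)=\phi$ and $\supp(F)=\supp(\phi)$; since $\supp(\phi)$ is compact in $X$, the embedding $j_X\cl X\into\bX$ is proper on $\supp(F)$. I then want to check that the sheaf-level operations match the function-level ones: $\chi_\loc(\eim{j_X}F)=\eim{j_X}\phi$ (immediate, since proper direct image along a closed immersion is extension by zero, and $\chi_\loc$ commutes with it, as already used implicitly in part~(i) of the proof of Theorem~\ref{th:Grgroup2}), and $\chi_\loc(\roim{j_X}F)$ equals $\oim{j_X}\phi$ as defined in \eqref{eq:oimj}. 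For the latter, I would use that $\chi_\loc$ intertwines $\RD_X$ with the duality of constructible functions (this is the definition of $\RD_X$ on $\CF$ recalled in Section~\ref{section:cstfct}), that $\RD_X$ commutes with $\opb{j_X}=\,\cdot\vert_X$, and the sheaf identity $\roim{j_X}F\simeq\RD_{\bX}\eim{j_X}\RD_X F$ — which holds because $F=\opb{j_X}G$ for $G=\eim{j_X}F\in\Derb_\Rc(\cor_{\bX})$ and then $\roim{j_X}\opb{j_X}G\simeq\RD_{\bX}\reim{j_X}\RD_X\opb{j_X}G$ by Corollary~\ref{cor:opersainftyext} applied to the morphism $j_X\cl\fX\to(\bX,\bX)$, noting $\reim{j_X}=\eim{j_X}$ and $\RD_X\opb{j_X}=\epb{j_X}\RD_{\bX}$... more directly, I would just quote $\roim{j_X}F\simeq\rhom(\cor_X,\eim{j_X}F)$ as in the proof of Lemma~\ref{le:KS21}(b)$\Rightarrow$(d), which is exactly $\RD'_{\bX}$-type reasoning, and observe $\chi_\loc$ of the right side is the definition \eqref{eq:oimj}.

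With those two identifications in hand, the lemma reduces to the sheaf statement: if $j\cl X\into\bX$ is an open embedding that is proper on $\supp(F)$, then $\roim{j}F\isoto\eim{j}F$. This is standard (it is recalled near the end of Section~\ref{section:shv}: ``if $f$ is proper on $\supp(F)$, then $\eim{f}F\isoto\oim{f}F$'', and the same holds after deriving). Applying $\chi_\loc$ to this isomorphism and using the two equalities above gives $\eim{j_X}\phi=\chi_\loc(\eim{j_X}F)=\chi_\loc(\roim{j_X}F)=\oim{j_X}\phi$, which is the claim.

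**Main obstacle.** The only genuinely delicate point is verifying that $\chi_\loc(\roim{j_X}F)$ coincides with the definition $\RD_{\bX}\eim{j_X}\RD_X\phi$ of $\oim{j_X}\phi$ in \eqref{eq:oimj}: this requires carefully tracking that $\chi_\loc$ commutes with $\RD$, with $\eim{j_X}$, and with restriction $\opb{j_X}$, and that these compatibilities chain together correctly (in particular that $\RD_X\opb{j_X}=\epb{j_X}\RD_{\bX}$ on $\Derb_\Rc$, which is part of Corollary~\ref{cor:opersainftyext}). Once this bookkeeping is done, everything else is the formal fact that a sheaf isomorphism descends to an equality of Euler--Poincaré indices, and the classical $\eim{j}F\simeq\roim{j}F$ under a properness-on-support hypothesis.
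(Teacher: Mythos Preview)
Your proposal is correct and is precisely the intended elaboration of the paper's one-line proof (``follows from the corresponding result for sheaves''): represent $\phi$ by $F\in\Derb_\Rc(\cor_\fX)$ with $\supp(F)=\supp(\phi)$ compact, use $\eim{j_X}F\isoto\roim{j_X}F$, and read off the equality via $\chi_\loc$. One small caveat: your invocation of Corollary~\ref{cor:opersainftyext} for the morphism $j_X\cl\fX\to(\bX,\bX)$ is not quite licit, since $(\bX,\bX)$ is generally not a b-analytic manifold (see the remark after Definition~\ref{def:bspace}); but your alternative route via $\roim{j_X}F\simeq\rhom(\cor_X,\eim{j_X}F)$, or simply the classical identity $\roim{f}\simeq\RD_Y\reim{f}\RD_X$ on $\R$-constructible sheaves (valid here since $\eim{j_X}\RD_XF$ is constructible on $\bX$ by Lemma~\ref{le:KS21}), closes this without difficulty.
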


\begin{proposition}\label{pro:opercfsainfty}
Let $\fX$ and $\fY$ be two b-analytic manifolds.
\banum
\item
Let  $\vphi\in\CF(\Xinf)$ and  $\psi\in\CF(\Yinf)$. Then the function $\vphi\etens\psi$, defined by  $(\vphi\etens\psi)(x,y)=\phi(x)\psi(y)$, belongs to $\CF((X\times Y)_\infty)$. 
\item
Let $f\cl \fX\to \fY$ be a morphism of b-analytic manifolds and let 
 $\psi\in\CF(\Yinf)$. Then the function $f^*\psi$ defined by $f^*\psi(x)=\psi(f(x))$ belongs to $\CF(\Xinf)$.
 \eanum
 \end{proposition}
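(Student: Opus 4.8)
The plan is to reduce each statement to the corresponding fact about b-constructible sheaves via the isomorphism $\chi_\loc\cl\BBK_\Rc(\cor_{\fX})\isoto\CF(\fX)$ of Theorem~\ref{th:Grgroup2}, exactly following the general ``principle'' stated in Section~\ref{section:cstfct}. For part (a), given $\vphi\in\CF(\Xinf)$ and $\psi\in\CF(\Yinf)$, I would choose $F\in\Derb_\Rc(\cor_{\fX})$ and $G\in\Derb_\Rc(\cor_{\fY})$ with $\chi_\loc(F)=\vphi$ and $\chi_\loc(G)=\psi$ (such sheaves exist by Theorem~\ref{th:Grgroup2}, and one may even take them supported on $\supp(\vphi)$, $\supp(\psi)$). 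Since $\chi_\loc$ is compatible with external tensor product, $\chi_\loc(F\letens G)=\vphi\etens\psi$; now $F\letens G\in\Derb_\Rc(\cor_{\fXY})$ by Proposition~\ref{pro:opersainftyint}(i) together with the identification $(X\times Y)_\infty=(X\times Y,\bX\times\bY)$, so applying $\chi_\loc$ (which lands in $\CF((X\times Y)_\infty)$ by Theorem~\ref{th:Grgroup2} applied to $\fXY$) gives $\vphi\etens\psi\in\CF((X\times Y)_\infty)$.

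For part (b), given a morphism $f\cl\fX\to\fY$ of b-analytic manifolds and $\psi\in\CF(\Yinf)$, I would pick $G\in\Derb_\Rc(\cor_{\fY})$ with $\chi_\loc(G)=\psi$. By Proposition~\ref{pro:opersainftyext}(i), $\opb{f}G\in\Derb_\Rc(\cor_{\fX})$. Since inverse image of sheaves corresponds to inverse image (composition) of constructible functions under $\chi_\loc$ — this is the compatibility $\chi_\loc(\opb{f}G)=f^*\chi_\loc(G)$ recorded in Section~\ref{section:cstfct} — we get $f^*\psi=\chi_\loc(\opb{f}G)\in\CF(\Xinf)$, again using that $\chi_\loc$ takes values in $\CF(\fX)$ by Theorem~\ref{th:Grgroup2}.

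Alternatively, and perhaps more directly, both statements can be proved purely at the level of subsets without invoking sheaves: for (a), write $\vphi=\sum_{i}c_i\un_{Z_i}$ and $\psi=\sum_{j}d_j\un_{W_j}$ as finite sums with $Z_i$, $W_j$ locally closed b-subanalytic, using Lemma~\ref{lem:cstonVi}(d); then $\vphi\etens\psi=\sum_{i,j}c_id_j\un_{Z_i\times W_j}$ is again such a finite sum, since $Z_i\times W_j$ is locally closed and b-subanalytic in $(X\times Y)_\infty$ by Proposition~\ref{pro:opersainfty}(a). For (b), writing $\psi=\sum_j d_j\un_{W_j}$ with $W_j$ b-subanalytic in $\fY$, one has $f^*\psi=\sum_j d_j\un_{\opb{f}(W_j)}$, and $\opb{f}(W_j)$ is b-subanalytic in $\fX$ by Proposition~\ref{pro:opersainfty}(c); finiteness of the decomposition is preserved, so $f^*\psi\in\CF(\Xinf)$ by Lemma~\ref{lem:cstonVi}(d).

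The argument is essentially routine; there is no real obstacle. The only point requiring mild care is making sure the target category in each application of Theorem~\ref{th:Grgroup2} is the correct b-analytic manifold (namely $\fXY=(X\times Y,\bX\times\bY)$ and $\fX$ respectively), and that the cited compatibilities of $\chi_\loc$ with $\letens$ and $\opb{f}$ — already used implicitly throughout Section~\ref{section:cstfct} — are invoked at the b-level, which is legitimate since b-constructible sheaves form a full subcategory stable under these operations by Propositions~\ref{pro:opersainftyint} and~\ref{pro:opersainftyext}.
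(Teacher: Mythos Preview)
Your proposal is correct. Your alternative, subset-level argument is essentially the paper's own proof: for (a) the paper simply invokes Lemma~\ref{lem:cstonVi}, and for (b) it invokes Proposition~\ref{pro:opersainfty} together with Definition~\ref{def:constinfty}. Your primary approach via $\chi_\loc$ and the sheaf-level Propositions~\ref{pro:opersainftyint} and~\ref{pro:opersainftyext} is also valid and free of circularity (those results and Theorem~\ref{th:Grgroup2} are all established beforehand), but it is heavier than needed for these two elementary operations; its merit is that it makes the general principle explicit, whereas the direct route shows that external product and pullback of b-constructible functions require nothing beyond the stability of b-subanalytic subsets under products and inverse images.
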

 In other words we have extended the morphisms~\eqref{eq:inversfct} and~\eqref{eq:exterfct} to b-analytic manifolds. 
 \begin{proof}
 (a) Apply  condition (b) of Lemma~\ref{lem:cstonVi}.
 
 \spa
 (b)  Apply   Proposition~\ref{pro:opersainfty} together with Definition~\ref{def:constinfty}.
\end{proof}
Although we shall not use it, let us mention that one can also define the internal hom and  the exceptional inverse image  by the formulas
 \eq\label{eq:epbpsi}
&&\ba{l}
 hom(\phi,\psi)\eqdot\RD_X(\RD_X\psi\cdot\phi),\quad \phi,\psi\in\CF(\fX),\\
\epb{f}\psi\eqdot \RD_X f^*(\RD_Y\psi), \psi\in\CF(\fY).
 \ea
 \eneq
 
 Now we study the integrals of constructible functions up to infinity.
 One can define two integrals of $\phi\in\CF(\fX)$. One sets
\eq\label{eq:intnp}
&&\int_X\phi\eqdot\int_{\bX}\eim{j_X}\phi,\quad \int^\np_X\phi\eqdot \int_{\bX}\oim{j_X}\phi.
\eneq
Recall notations~\eqref{eq:globalEP2}.

\begin{lemma}\label{le:intnpdual}
\banum
\item
One has $\int^\np_X\phi=\int_X\RD_X\phi$.
\item
Let $Z$ be a locally closed  b-subanalytic subset of $X$. Then
\footnote{In~\cite{Sc20}*{v3},  it was written  $\int^\np_X\un_Z=\chi(Z)$, which is  not correct.}
$\int_X\un_Z=\chi_c(\cor_Z)$.
\item
The integrals $\int_X\phi$  and  $\int^\np_X\phi$  do not depend on the choice of $\bX$. 
\eanum 
\end{lemma}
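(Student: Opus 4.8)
The plan for part~(a) is to reduce to the classical identity $\int_M\RD_M\psi=\int_M\psi$, valid for any real analytic manifold $M$ and any compactly supported $\psi\in\CF(M)$: this is the case $f=a_M$ of the commutation~\eqref{eq:dualint} of duality with integration, together with the fact that $\RD_\rmpt$ is the identity of $\CF(\rmpt)=\Z$. Applying it on $M=\bX$ to $\psi\eqdot\eim{j_X}\RD_X\phi$, whose support lies in the compact set $\ol X$, and unwinding~\eqref{eq:oimj} and~\eqref{eq:intnp} gives
\[
\int^\np_X\phi=\int_{\bX}\oim{j_X}\phi=\int_{\bX}\RD_{\bX}\psi=\int_{\bX}\psi=\int_{\bX}\eim{j_X}\RD_X\phi=\int_X\RD_X\phi .
\]
Part~(b) should be immediate: since $Z\subset X$, the function $\eim{j_X}\un_Z$ on $\bX$ is just the characteristic function of $Z$ viewed in $\bX$, and $Z$ is locally closed (as $X$ is open in $\bX$), relatively compact and subanalytic in $\bX$, so~\eqref{eq:EPI3} applied on $\bX$ yields $\int_X\un_Z=\int_{\bX}\un_Z=\chi_c(Z)$.

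Part~(c) is the real content. To compare two choices $\fX=(X,\bX)$ and $\fX'=(X,\bX')$ with $\phi\in\CF(\fX)\cap\CF(\fX')$ — denoting the two integrals by $\int_{(X,\bX)}\phi$ and $\int_{(X,\bX')}\phi$ — the plan is \emph{not} to refine two adapted partitions against one another (a naive common refinement may fail to be subanalytic, on either side, near the boundary), but to produce one decomposition of $\phi$ whose pieces are simultaneously subanalytic in $\bX$ and in $\bX'$. For every $m$, $\opb{\phi}(m)$ is subanalytic in $\bX$ and in $\bX'$, and the family of subsets of $X$ with this double property is a Boolean algebra that is moreover stable under taking the closure in $X$, because for such an $S$ one has $\ol S^X=\ol S^{\bX}\cap X=\ol S^{\bX'}\cap X$. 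The standard induction on dimension then applies to each $\opb{\phi}(m)$: remove from $S$ the closure in $X$ of its frontier $\ol S^X\setminus S$; the result is a locally closed set of the family, the remainder is again in the family but of strictly smaller dimension, and the process terminates. One thus obtains a finite decomposition $\phi=\sum_i c_i\un_{Z_i}$ with each $Z_i$ locally closed and subanalytic both in $\bX$ and in $\bX'$.

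By part~(b) applied first in $\fX$ and then in $\fX'$, and by linearity of the integral,
\[
\int_{(X,\bX)}\phi=\sum_i c_i\,\chi_c(Z_i)=\int_{(X,\bX')}\phi ,
\]
the middle term being intrinsic to $X$ by~\eqref{eq:globalEP2}; this proves~(c). The step I expect to require the most care is precisely this comparison: the boundary behaviour of $X$ in $\bX$ and in $\bX'$ can disagree wildly, so one cannot dissect $\phi$ along a single subanalytic partition valid for both compactifications; working inside the ``bi-subanalytic'' Boolean algebra circumvents this, and the finiteness of the resulting decomposition — without which the argument collapses, since $\chi_c$ is not additive over infinite partitions — is exactly what relative compactness of $X$ in each compactification guarantees.
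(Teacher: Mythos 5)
Your parts (a) and (b) are correct and follow the paper's own route: (a) is the same chain of equalities, reducing to the commutation of duality with integration \eqref{eq:dualint} over a point applied on $\bX$ to the compactly supported function $\eim{j_X}\RD_X\phi$, and (b) is the paper's computation, which you phrase via \eqref{eq:EPI3} on $\bX$ instead of the sheaf-level identity $\int_X\un_Z=\chi(\reim{a_Z}\cor_Z)=\chi_c(Z)$ --- same content. For (c) the paper says only ``follows from (b)'', the implicit point being that $\chi_c(Z)$ in \eqref{eq:globalEP2} is intrinsic to $Z$; what it leaves unsaid is how to compare the decompositions attached to two different compactifications $\bX$ and $\bX'$, and this is exactly the bridge you build. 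Your construction is correct: the subsets of $X$ subanalytic in both $\bX$ and $\bX'$ do form a Boolean algebra stable under closure in $X$ (since $\ol S^X=\ol S^{\bX}\cap X=\ol S^{\bX'}\cap X$ and $X$ is subanalytic in each compactification), the set $S\setminus\ol{\,\ol S^X\setminus S\,}^X=\ol S^X\cap\bigl(X\setminus\ol{\,\ol S^X\setminus S\,}^X\bigr)$ is locally closed and bi-subanalytic, the remainder has strictly smaller dimension, and relative compactness plus finiteness of the value set gives a finite decomposition to which (b) applies on both sides. Your caveat about the naive common refinement is also well taken: intersecting a $\bX$-subanalytic piece with a $\bX'$-subanalytic piece can produce a set that is tame in neither compactification and may have infinite compactly supported cohomology, so additivity of $\chi_c$ cannot be invoked blindly there; your bi-subanalytic dissection circumvents this cleanly and is a legitimate (and more careful) way to justify the paper's one-line claim.
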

\begin{proof}
(a)   follows from 
\eqn
&&\int_{\bX}\oim{j_X}\phi=\int_{\bX}\RD_{\bX}\eim{j_X}\RD_X\phi=\int_{\bX}\eim{j_X}\RD_X\phi=\int_X\RD_X\phi.
\eneqn
where the second equality follows from~\eqref{eq:dualint} applied with $Y=\rmpt$.

\spa
(b) Recall~\eqref{eq:EPI3}. Also recall that $a_{Z}$ is the map $Z\to\rmpt$ and similarly with $a_{\bX}$. Denoting by $j_Z$ the embedding $Z\into\bX$,
we have
 \eqn\label{eq:chic}
 &&\int_X\un_Z=\chi(\reim{a_{\bX}}\reim{j_X}\cor_{XZ})=\chi(\reim{a_{\bX}}\reim{j_Z}\cor_{Z})=\chi(\reim{a_Z}\cor_Z)=\chi_c(\cor_Z). 
\eneqn

\spa
(c) follows from (b)  and (a).
\end{proof}

 \begin{example}
Let $X=\R$. Then:
\eqn
&&\parbox{75ex}{
(i) One has  $\int_\R\un_\R=-1$, $\int^\np_\R\un_\R=1$.\\

\vspace{-2ex}
(ii) Let $U=(-\infty,b)$ with $-\infty<b<\infty$. Then $ \int_\R\un_{U}=-1$, $ \int^\np_\R\un_{U}=0$.\\

\vspace{-2ex}
(iii) Let $Z=(-\infty,b]$ with  $-\infty<b<+\infty$.  Then $\int_\R\un_Z=0$, $\int^\np_\R\un_Z=1$.\\

\vspace{-2ex}
(iv) Let $S=[a,b]$ with $-\infty< a\leq b<+\infty$. Then $\int_\R\un_S=\int^\np_\R\un_S=1$.\\

\vspace{-2ex}
(v) Let $Z=(a,b)$ with  $-\infty<a< b<+\infty$.  Then $\int_\R\un_Z= \int^\np_\R\un_Z=-1$.\\

\vspace{-2ex}
(vi) Let $Z=[a,b)$ with  $-\infty<a\leq b<+\infty$.  Then $\int_\R\un_Z= \int^\np_\R\un_Z=0$.
}
\eneqn
Indeed, (i) is obvious. 
 Let $U$ be as in (ii). Then $U$ is  topologically isomorphic to $\R$ and we get   $ \int_\R\un_{U}=-1$. By the additivity of the integral, we deduce that for $Z$ as in (iii),  $\int_\R\un_Z=0$. By Lemma~\ref{le:intnpdual}~(a),  we get 
  $ \int^\np_\R\un_{U}=0$ and by additivity, $\int^\np_\R\un_Z=1$.
Finally, (iv), (v) and (vi) are obvious. 
  \end{example} 
Let $f\cl \fX\to \fY$ be  a morphism  of b-analytic manifolds and let $\phi\in\CF(\fX)$. Similarly as in~\eqref{eq:intcst}, one sets for $y\in Y$:
\eq\label{eq:integrinftyf}
&&(\int_f\phi)(y)=\int_X\un_{\opb{f}(y)}\cdot\phi.
\eneq
Of course, when $Y=\rmpt$, one recovers~\eqref{eq:intnp}.

\begin{lemma}
The function $\int_f\phi$ defined by~\eqref{eq:integrinftyf} belongs to $\CF(\fY)$.
\end{lemma}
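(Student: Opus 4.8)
The plan is to represent $\phi$ by a $K$-theory class, push it forward along $\reim{f}$, and then combine Proposition~\ref{pro:opersainftyext} with Theorem~\ref{th:Grgroup2}. Concretely, by Theorem~\ref{th:Grgroup2} (or directly by Lemma~\ref{lem:cstonVi}~(d)) I would write $\phi=\sum_{i\in I}c_i\un_{Z_i}$ with $I$ finite, each $Z_i$ locally closed and b-subanalytic, and set $u=\sum_{i\in I}c_i[\cor_{XZ_i}]\in\BBK_\Rc(\cor_{\fX})$, so that $\chi_\loc(u)=\phi$. Since by Proposition~\ref{pro:opersainftyext}~(ii) the ordinary proper direct image $\reim{f}\cl\Derb(\cor_X)\to\Derb(\cor_Y)$ restricts to $\Derb_\Rc(\cor_{\fX})\to\Derb_\Rc(\cor_{\fY})$, it induces $\reim{f}\cl\BBK_\Rc(\cor_{\fX})\to\BBK_\Rc(\cor_{\fY})$, and by Theorem~\ref{th:Grgroup2} the function $\chi_\loc(\reim{f}(u))$ belongs to $\CF(\fY)$. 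Hence it suffices to prove $\int_f\phi=\chi_\loc(\reim{f}(u))$.

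To verify this I would evaluate both sides at an arbitrary $y\in Y$. The singleton $\{y\}$ is b-subanalytic in $Y$, so by Proposition~\ref{pro:opersainfty}~(c) the fibre $\opb{f}(y)$, and with it each $Z_i\cap\opb{f}(y)$, is a locally closed b-subanalytic subset of $X$. Proper base change for the functor $\reim{f}$ gives $(\reim{f}\cor_{XZ_i})_y\simeq\rsect_c(\opb{f}(y);\opb{i_y}\cor_{XZ_i})\simeq\rsect_c(Z_i\cap\opb{f}(y);\cor)$, where $i_y\cl\opb{f}(y)\into X$; so, using the definition~\eqref{eq:localEP} of $\chi_\loc$ and additivity over $\BBK$,
\[
\chi_\loc(\reim{f}(u))(y)=\sum_{i\in I}c_i\,\chi\bl(\reim{f}\cor_{XZ_i})_y\br=\sum_{i\in I}c_i\,\chi_c\bl Z_i\cap\opb{f}(y)\br .
\]
On the other hand $\un_{\opb{f}(y)}\cdot\phi=\sum_{i\in I}c_i\un_{Z_i\cap\opb{f}(y)}$ is b-constructible on $\fX$ (Lemma~\ref{lem:cstonVi}), so that the b-integral in~\eqref{eq:integrinftyf} makes sense, and Lemma~\ref{le:intnpdual}~(b) together with additivity yields
\[
\bl\int_f\phi\br(y)=\int_X\un_{\opb{f}(y)}\cdot\phi=\sum_{i\in I}c_i\,\chi_c\bl Z_i\cap\opb{f}(y)\br .
\]
Comparing the two displays gives $\int_f\phi=\chi_\loc(\reim{f}(u))\in\CF(\fY)$.

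I do not expect a genuine obstacle: the whole weight of the statement is carried by Proposition~\ref{pro:opersainftyext} — which is exactly the point at which b-constructibility (rather than ordinary constructibility) is essential, since $\reim{f}$ of an ordinary $\R$-constructible sheaf need not be $\R$-constructible — and by Theorem~\ref{th:Grgroup2}. The only things to state carefully are that $\reim{f}$ between the b-constructible categories is literally the usual proper direct image $\Derb(\cor_X)\to\Derb(\cor_Y)$, so that ordinary proper base change applies verbatim, and that the integral in~\eqref{eq:integrinftyf} is the b-integral of~\eqref{eq:intnp} applied to the b-constructible function $\un_{\opb{f}(y)}\cdot\phi$, which is exactly what Lemma~\ref{le:intnpdual}~(b) evaluates.
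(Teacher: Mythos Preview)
Your proposal is correct and follows the same strategy as the paper: choose a b-constructible representative for $\phi$, apply Proposition~\ref{pro:opersainftyext}~(ii) to see that $\reim{f}$ preserves b-constructibility, and identify $\int_f\phi$ with $\chi_\loc(\reim{f}F)$. The paper's proof is a two-line version of yours; you have additionally spelled out, via proper base change and Lemma~\ref{le:intnpdual}~(b), the pointwise identity $(\int_f\phi)(y)=\chi_\loc(\reim{f}F)(y)$ that the paper simply asserts.
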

\begin{proof}
Let us choose $F\in\Derb(\cor_\fX)$ such that $\chi_\loc(F)=\phi$. Then $(\int_f\phi)(y)=\chi_\loc(\reim{f}F)$ and $\reim{f}F\in\Derb(\cor_\fY)$. 
\end{proof}
Hence, we have constructed a morphism
\eqn
&&\int_f\cl \oim{f}\CF_\fX\to\CF_\fY,\quad \phi\mapsto\int_f\phi.
\eneqn
We also define 
\eq\label{eq:integrinftyf2}
&&\int^\np_f\cl \oim{f}\CF_\fX\to\CF_\fY,\quad \int^\np_f\phi\eqdot \RD_Y\int_f\RD_X\phi.
\eneq

The next results are easily checked.
\begin{itemize}
\item
 If $f$ is proper on $\supp(\phi)$, then $\int_f\phi=\int^\np_f\phi$.
 \item
If $\phi=\chi_\loc(F)$ for some  $F\in\Derb_\Rc(\cor_{\fX})$, then $\int_f\phi=\chi_\loc(\reim{f}F)$ and 
$\int^\np_f\phi=\chi_\loc(\roim{f}F)$.
\item
Let  $g\cl \fY\to \fZ$  be another morphism of b-analytic manifolds. Then
\eqn
&&\int_{g\conv f}\phi=\int_g\int_f\phi,\quad \int^\np_{g\conv f}\phi=\int^\np_g\int^\np_f\phi.
\eneqn
\end{itemize}

\subsubsection*{Base change formula and  projection formula}

\begin{proposition}\label{pro:basechinfty}
Consider the Cartesian square~\eqref{diag:cart1} and let $\vphi\in\CF(\fX)$. Then $ \int_{f'} (g^{\prime * }\vphi)$ is well defined,  belongs to $\CF(\fY)$ and 
\eq\label{eq:basechform2}
&&g^* \int_f \vphi = \int_{f'} (g^{\prime * }\vphi).
\eneq
\end{proposition}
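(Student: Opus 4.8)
The plan is to deduce this from the sheaf-theoretic base change formula, Proposition~\ref{pro:basechinftyshv}, via the dictionary between constructible sheaves up to infinity and constructible functions up to infinity provided by Theorem~\ref{th:Grgroup2}. First I would choose $F\in\Derb_\Rc(\cor_{\fX})$ with $\chi_\loc(F)=\vphi$; such an $F$ exists by the surjectivity part of Theorem~\ref{th:Grgroup2}. Then I would apply the inverse image $\opb{g'}$ and the proper direct image $\reim{f'}$ to $F$, staying inside $\Derb_\Rc$ of the appropriate b-analytic manifolds by Propositions~\ref{pro:opersainftyext} and~\ref{pro:opersainftyint} — in particular this already shows that $\int_{f'}(g^{\prime *}\vphi)$ is a well-defined element of $\CF(\fY)$, since it equals $\chi_\loc(\reim{f'}\opb{g'}F)$.

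The key computation is then the chain of equalities
\eqn
&&g^*\int_f\vphi = g^*\chi_\loc(\reim{f}F) = \chi_\loc(\opb{g}\reim{f}F) = \chi_\loc(\reim{f'}\opb{g'}F) = \int_{f'}(g^{\prime *}\vphi).
\eneqn
Here the first equality is the definition of $\int_f$ on the level of sheaves (the bullet point after~\eqref{eq:integrinftyf2}), the second is the compatibility of $\chi_\loc$ with inverse images (which is part of the "principle" that $\chi_\loc$ intertwines all operations, and follows from the fact that $(\opb{g}H)_y\simeq H_{g(y)}$ for stalks), the third is exactly Proposition~\ref{pro:basechinftyshv} applied to $F$, and the last is again the definition of $\int_{f'}$ together with $\chi_\loc(\opb{g'}F)=g^{\prime *}\vphi$.

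I expect the only genuinely delicate point to be bookkeeping about the b-analytic manifold structures: one must make sure that the Cartesian square~\eqref{diag:cart1}, which a priori lives in topological spaces, carries enough b-analytic structure for all the morphisms $f,g,f',g'$ to be morphisms of b-analytic manifolds, so that Proposition~\ref{pro:basechinftyshv} genuinely applies. As in the footnote to~\eqref{diag:cart1}, $W$ is regarded as the closed subanalytic subset $\{(x,y);f(x)=g(y)\}$ of $X\times Y$; one equips it with the b-analytic structure coming from the closure of this set in $\bX\times\bY$, checks that the two projections $f'$ and $g'$ have subanalytic graphs up to infinity (this uses that $\Gamma_f$ and $\Gamma_g$ are b-subanalytic, via Proposition~\ref{pro:opersainfty}), and then everything is legitimate. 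Once this is set up, the proof is just the display above.

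A variant, which I would mention as a remark, is that the non-proper version $g^*\int^\np_f\vphi=\int^\np_{f'}(g^{\prime *}\vphi)$ follows formally by applying duality $\RD$ on both sides, using Corollary~\ref{cor:opersainftyext} ($\roim{f}=\RD_Y\reim{f}\RD_X$), the fact established above that $\RD$ preserves $\CF(\fX)$ and $\CF(\fY)$, and the compatibility of $\opb{g}$ with $\RD$ up to the exchange $\opb{g}\leftrightarrow\epb{g}$; but this is beyond what is claimed in the statement, so I would not include it in the proof proper.
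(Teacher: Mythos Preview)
Your approach is essentially the paper's: lift $\vphi$ to a constructible sheaf, apply the sheaf-theoretic base change (Proposition~\ref{pro:basechinftyshv}), and read off the equality via $\chi_\loc$. The paper phrases it by choosing $F\in\Derb_\Rc(\cor_{\bX})$ with $\chi_\loc(F)=\eim{j_X}\vphi$ rather than working on $\fX$ directly via Theorem~\ref{th:Grgroup2}, but this difference is cosmetic.

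One genuine correction, however: your third paragraph is both unnecessary and wrong as stated. The fibre product $W$ need not be smooth (take for instance $X=Y=Z=\R$ with $f(x)=x^2$ and $g(y)=y^2$, so that $W=\{(x,y);x^2=y^2\}$ is a union of two crossing lines), so it cannot in general be given a b-analytic manifold structure, and $f',g'$ are not morphisms of b-analytic manifolds. This also undermines your first paragraph's appeal to Propositions~\ref{pro:opersainftyext} and~\ref{pro:opersainftyint} to conclude that $\reim{f'}\opb{g'}F$ is b-constructible: those propositions only apply to morphisms of b-analytic manifolds. The correct logic is the reverse of what you wrote. Proposition~\ref{pro:basechinftyshv} is just the classical base change for sheaves on topological spaces, stated as an isomorphism in $\Derb(\cor_{\fY})$; as the paper remarks immediately after it, the b-constructibility of the right-hand side $\reim{f'}\opb{g'}F$ (and hence the well-definedness of $\int_{f'}(g^{\prime *}\vphi)$ as an element of $\CF(\fY)$) is \emph{deduced from} the isomorphism with $\opb{g}\reim{f}F$, which is b-constructible because $f$ and $g$ are genuine morphisms of b-analytic manifolds. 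So drop the third paragraph entirely and reorder the first so that well-definedness comes out of the base change rather than being asserted beforehand.
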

\begin{proof}
Choose $F\in\Derb_\Rc(\cor_{\fX})$ such that $\chi_\loc(F)=\eim{j_X}\phi$.
Then apply the base change formula for sheaves (Proposition~\ref{pro:basechinftyshv}).
\end{proof}

\begin{proposition}\label{pro:projforminfty}
Let $f\cl \fX\to \fY$ be a morphism of b-analytic manifolds, let  $\vphi\in\CF(\fX)$ and $\psi\in\CF(\fY)$. Then
\eq\label{eq:projforminfty2}
&&\int_f(\vphi\cdot f^*\psi)=\psi\int_f\vphi.
\eneq
\end{proposition}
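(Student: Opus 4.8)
The plan is to deduce the projection formula for constructible functions from the corresponding projection formula for constructible sheaves, following the general principle stated after Theorem~\ref{th:Grgroup1} that operations on constructible functions are the image under $\chi_\loc$ of the corresponding operations on constructible sheaves. The key point is that all the operations appearing in~\eqref{eq:projforminfty2} — inverse image $f^*$, product $\cdot$, and the proper integral $\int_f$ — are realized on the sheaf level by $\opb{f}$, $\ltens$, and $\reim{f}$, and we already have Proposition~\ref{pro:projforminftyshv} at our disposal.

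Concretely, I would first use Theorem~\ref{th:Grgroup2} (or just Theorem~\ref{th:Grgroup1} together with the fact that $\chi_\loc$ preserves b-constructibility) to choose $F\in\Derb_\Rc(\cor_{\fX})$ with $\chi_\loc(F)=\vphi$ and $G\in\Derb_\Rc(\cor_{\fY})$ with $\chi_\loc(G)=\psi$. Then I would invoke the compatibility of $\chi_\loc$ with the relevant sheaf operations: $\chi_\loc(\opb{f}G)=f^*\psi$ (inverse image), $\chi_\loc(F\ltens\opb{f}G)=\vphi\cdot f^*\psi$ (tensor product goes to product of functions), $\chi_\loc(\reim{f}(\scbul))=\int_f(\scbul)$ on the constructible-function level (this is the definition of $\int_f$ recalled just before this proposition, together with the bullet point stating $\int_f\phi=\chi_\loc(\reim{f}F)$), and likewise $\chi_\loc(\reim{f}F\ltens G)=(\int_f\vphi)\cdot\psi=\psi\int_f\vphi$. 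Applying $\chi_\loc$ to the sheaf isomorphism $\reim{f}(F\ltens\opb{f}G)\simeq\reim{f}F\ltens G$ of Proposition~\ref{pro:projforminftyshv} then yields exactly~\eqref{eq:projforminfty2}.

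The one genuine thing to check, and the main (mild) obstacle, is that all the intermediate sheaves involved are $\R$-constructible up to infinity, so that $\chi_\loc$ is actually an isomorphism on the relevant Grothendieck groups and the identifications above are legitimate: $F\ltens\opb{f}G\in\Derb_\Rc(\cor_{\fX})$ by Propositions~\ref{pro:opersainftyext}(i) and~\ref{pro:opersainftyint}(ii), and then $\reim{f}(F\ltens\opb{f}G)$ and $\reim{f}F$ lie in $\Derb_\Rc(\cor_{\fY})$ by Proposition~\ref{pro:opersainftyext}(ii), so tensoring with $G$ stays in $\Derb_\Rc(\cor_{\fY})$ again by Proposition~\ref{pro:opersainftyint}(ii). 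Once that bookkeeping is in place the proof is essentially a one-line transport along $\chi_\loc$. I would write it as follows.

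\begin{proof}
Choose $F\in\Derb_\Rc(\cor_{\fX})$ and $G\in\Derb_\Rc(\cor_{\fY})$ such that $\chi_\loc(F)=\vphi$ and $\chi_\loc(G)=\psi$, which is possible by Theorem~\ref{th:Grgroup2}. By Propositions~\ref{pro:opersainftyint} and~\ref{pro:opersainftyext}, the sheaves $F\ltens\opb{f}G$, $\reim{f}(F\ltens\opb{f}G)$, $\reim{f}F$ and $\reim{f}F\ltens G$ all belong to the appropriate categories $\Derb_\Rc(\cor_{\fX})$ or $\Derb_\Rc(\cor_{\fY})$. Applying $\chi_\loc$ to the projection formula for sheaves (Proposition~\ref{pro:projforminftyshv})
\eqn
&&\reim{f}(F\ltens\opb{f}G)\simeq \reim{f}F\ltens G,
\eneqn
and using that $\chi_\loc$ sends $\opb{f}$, $\ltens$ and $\reim{f}$ to $f^*$, the product and $\int_f$ respectively, we get
\eqn
&&\int_f(\vphi\cdot f^*\psi)=\chi_\loc\bl\reim{f}(F\ltens\opb{f}G)\br=\chi_\loc(\reim{f}F\ltens G)=\bl\int_f\vphi\br\cdot\psi=\psi\int_f\vphi.
\eneqn
\end{proof}
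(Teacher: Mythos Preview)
Your proof is correct and follows essentially the same approach as the paper: reduce to the sheaf-theoretic projection formula (Proposition~\ref{pro:projforminftyshv}) by choosing constructible sheaves representing $\vphi$ and $\psi$ and then transport along $\chi_\loc$. The paper's proof is terser (it picks representatives on $\bX$ and $\bY$ for $\eim{j_X}\vphi$ and $\eim{j_Y}\psi$ rather than working directly in $\Derb_\Rc(\cor_{\fX})$ and $\Derb_\Rc(\cor_{\fY})$), but this is only a cosmetic difference, and your explicit bookkeeping of the intermediate b-constructibility is a welcome addition.
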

\begin{proof}
Choose $F\in\Derb_\Rc(\cor_{\fX})$ such that $\chi_\loc(F)=\eim{j_X}\phi$ 
and  choose $G\in\Derb_\Rc(\cor_{\fY})$ such that $\chi_\loc(G)=\eim{j_Y}\psi$.
Then apply the projection formula for sheaves (Proposition~\ref{pro:projforminftyshv})..
\end{proof}
\begin{example}
Equality~\eqref{eq:projforminfty2} is no longer true when replacing $\int_f$ with $\int_f^\np$. Set $X=\R^2$ with coordinates $(y,t)$ and $Y=\R$, $f$ being the first projection. Let $\phi=\un_S$ with $S=\{(y,t);t=1/(1-y^2), -1<y<1\}$ and let $\psi=\un_Z$ with $Z=(-1,1)$. 
One checks easily that $\phi$ is subanalytic up to infinity when choosing for example for $\bX$ the projective compactification of $\R^2$. 
We have $\un_S\cdot f^*\un_Z=\un_S$,   $\int_f\un_S=\un_Z$ and $\RD_X\un_S=-\un_S$ (see Example~\ref{exa:dualA}). Hence, 
\eqn
&&\int_f^\np\un_S\cdot f^*\un_Z=\int^\np_f\un_S=\RD_Y\int_f\RD_X\un_S=-\RD_Y\un_Z=\un_{[-1,1]},\\
&&\un_Z\cdot\int_f^\np\un_S=\un_Z\cdot \un_{[-1,1]}=\un_{(-1,1)}.
\eneqn
\end{example}

\subsubsection*{ Convolution of kernels}
Recall  Diagram~\ref{diag:123} when replacing the manifolds $X_i$ with b-analytic manifolds $X_{i\infty}$ ($i=1,2,3$). 
Let $\lambda_{12}\in \CF(X_{12\infty})$ and $\lambda_{23}\in \CF(X_{23\infty})$. 
It follows from Proposition~\ref{pro:opercfsainfty} that the function
 \eq\label{eq:convfctinfty}
 &&\lambda_{12}\cconv[2]\lambda_{23}\eqdot\int_{q_{13}}q_{12}^*\lambda_{12}\cdot q_{23}^*\lambda_{23}.
\eneq
is well-defined and belongs to $\CF(X_{13\infty})$. 
Moreover 

\begin{theorem}\label{th:assocconvinfty}
Let $\lambda_{ij}\in\CF(X_{ij\infty})$ \lp$i=1,2,3,4$, $j=i+1$\rp.
One has
\eqn
&&(\lambda_{12}\cconv[2]\lambda_{23})\conv[3]\lambda_{34}=\lambda_{12}\cconv[2](\lambda_{23}\conv[3]\lambda_{34})\in\CF(X_{14\infty}).
\eneqn
\end{theorem}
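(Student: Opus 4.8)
The plan is to deduce the associativity of the convolution of constructible functions up to infinity directly from the associativity of the convolution of kernels of b-constructible sheaves (Corollary~\ref{cor:assocnpstar} together with the associativity~\eqref{eq:assockernels}), using the fact that $\chi_\loc$ intertwines all the operations involved. First I would recall that, by Theorem~\ref{th:Grgroup2}, the local Euler--Poincar\'e index gives an isomorphism $\chi_\loc\cl\BBK_\Rc(\cor_{\fX_{ij}})\isoto\CF(X_{ij\infty})$ for each pair $(i,j)$, so I may choose b-constructible kernels $K_{ij}\in\Derb_\Rc(\cor_{\fX_{ij}})$ with $\chi_\loc(K_{ij})=\lambda_{ij}$ for $j=i+1$, $i=1,2,3$. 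The key point is then the compatibility statement
\begin{equation*}
\chi_\loc(K_{ij}\cconv[k]K_{kl})=\chi_\loc(K_{ij})\cconv[k]\chi_\loc(K_{kl})
\end{equation*}
for the relevant indices; granting this, one computes $\chi_\loc\bigl((K_{12}\conv[2]K_{23})\conv[3]K_{34}\bigr)=(\lambda_{12}\conv[2]\lambda_{23})\conv[3]\lambda_{34}$ and likewise for the other bracketing, and the associativity isomorphism~\eqref{eq:assockernels} of the composition of kernels, which lives in $\Derb_\Rc(\cor_{\fX_{14}})$ by Corollary~\ref{cor:assocnpstar}, gives the desired equality after applying $\chi_\loc$ (using that $\chi_\loc$ factors through $\BBK_\Rc$, hence is insensitive to isomorphism).

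The substance of the argument is therefore to justify that $\chi_\loc$ commutes with the composition of b-constructible kernels. This unwinds into three pieces, each of which is recorded earlier in the paper in the ``dictionary'' between sheaf operations and operations on constructible functions. Namely, $\chi_\loc$ commutes with inverse images $\opb{q_{ij}}$ (this is the inverse-image morphism~\eqref{eq:inversfct} extended to b-analytic manifolds in Proposition~\ref{pro:opercfsainfty}); it commutes with the derived tensor product (the product on $\CF$ is the image of $\ltens$, cf. the ``principle'' following Theorem~\ref{th:Grgroup1}); and it commutes with $\reim{q_{13}}$ (the first bulleted item after~\eqref{eq:integrinftyf2} states $\int_f\phi=\chi_\loc(\reim{f}F)$ when $\phi=\chi_\loc(F)$). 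Composing these three and comparing with Definition~\eqref{eq:convfctinfty} of $\lambda_{12}\cconv[2]\lambda_{23}$, which is exactly $\int_{q_{13}}q_{12}^*\lambda_{12}\cdot q_{23}^*\lambda_{23}$, one gets $\chi_\loc(K_{12}\conv[2]K_{23})=\lambda_{12}\conv[2]\lambda_{23}$; this is an equality in $\CF(X_{13\infty})$, which is legitimate since all functors preserve the b-constructibility classes by Propositions~\ref{pro:opersainftyint} and~\ref{pro:opersainftyext}.

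The one mild subtlety I would be careful about is the same one already flagged in Theorem~\ref{th:Grgroup1}: $\chi_\loc$ is defined on objects (resp. on $\BBK_\Rc$), not a priori functorially on morphisms, so one cannot literally say ``apply $\chi_\loc$ to the isomorphism~\eqref{eq:assockernels}''. The correct phrasing is that~\eqref{eq:assockernels} is an isomorphism in $\Derb_\Rc(\cor_{\fX_{14}})$, hence the two bracketings define the \emph{same class} in $\BBK_\Rc(\cor_{\fX_{14}})$, and $\chi_\loc\cl\BBK_\Rc(\cor_{\fX_{14}})\isoto\CF(X_{14\infty})$ sends equal classes to equal functions. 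I expect this bookkeeping, rather than any genuine geometric difficulty, to be the only place requiring care: the hard analytic content (b-subanalyticity of the graphs and of $S_1\conv[k]S_2$, finiteness up to infinity, constructibility of non-proper direct images) has all been established already in Propositions~\ref{pro:opersainfty}, \ref{pro:opersainftyint} and~\ref{pro:opersainftyext}. So the proof reduces to: choose lifts $K_{ij}$, invoke the compatibility of $\chi_\loc$ with $\opb{q_{ij}}$, $\ltens$ and $\reim{q_{13}}$ to identify $\chi_\loc(K_{12}\conv K_{23}\conv K_{34})$ with both sides of the claimed identity, and conclude by the associativity of kernel composition.
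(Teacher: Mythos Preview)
Your proposal is correct and is precisely the second of the two proof strategies the paper itself indicates: lift each $\lambda_{ij}$ to a b-constructible kernel $K_{ij}$ via Theorem~\ref{th:Grgroup2}, use that $\chi_\loc$ intertwines $\opb{q}$, $\ltens$ and $\reim{q}$ with their counterparts on constructible functions, and then read off the equality from the associativity~\eqref{eq:assockernels} of kernel composition in $\Derb_\Rc(\cor_{\fX_{14}})$. The paper also notes an alternative route, proving associativity directly at the level of constructible functions by mimicking the sheaf-theoretic argument via the base change and projection formulas (Propositions~\ref{pro:basechinfty} and~\ref{pro:projforminfty}), but your lift-to-sheaves argument is the one it singles out as well. (Minor quibble: the compatibility $\int_f\chi_\loc(F)=\chi_\loc(\reim{f}F)$ you invoke is the \emph{second} bulleted item after~\eqref{eq:integrinftyf2}, not the first.)
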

One can prove this theorem by mimicking 
 the classical proof for sheaves, using now Propositions~\ref{pro:basechinfty} and~\ref{pro:projforminfty}. One can also prove this result by replacing each $\lambda_{ij}$ with a kernel $K_{ij}\in\Derb_\Rc(\cor_{X_{ij\infty}})$.

\subsection{$\gamma$-constructible functions}\label{subsection:gammatop}

As already mentioned in the introduction, $\gamma$-constructible functions appear  naturally in TDA (see \cites{CGR12, Leb21, KiM21} among others).

Let $\BBV$ and $\Vi$  be as in \S~\ref{sect:conv}.
We  define the convolution and the non-proper convolution similarly as for sheaves (see Proposition~\ref{pro:assocnpstar}). For 
$\phi,\psi\in\CF(\Vi)$, we set
\eqn
&&\phi\star \psi\eqdot\int_{s}\phi\etens \psi,\quad \phi\npstar \psi\eqdot\int^\np_{s}\phi\etens \psi. 
\eneqn
By  the preceding results, both $\phi\star \psi$ and $ \phi\npstar \psi$ belong to $\CF(\Vi)$. Note that
\eqn
&&\phi\star \psi=\psi\star \phi,\quad \phi\npstar \psi=\psi\npstar \phi.
\eneqn

\begin{lemma}\label{le:assocnpstar}
Let $\phi_i\in\CF(\Vi)$, $i=1,2,3$. Then 
\eqn
&&\phi_1\npstar\phi_2=\RD_X( \RD_X\phi_1\star\RD_X\phi_2),
\quad(\phi_1\npstar\phi_2)\npstar\phi_3=\phi_1\npstar(\phi_2\npstar\phi_3).
\eneqn
\end{lemma}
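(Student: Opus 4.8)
The plan is to mirror the proof of Proposition~\ref{pro:assocnpstar} exactly, transporting the two sheaf-level identities to constructible functions via the isomorphism $\chi_\loc$ of Theorem~\ref{th:Grgroup2}. First I would recall that for $\phi_i\in\CF(\Vi)$ one may choose $F_i\in\Derb_\Rc(\cor_{\Vi})$ with $\chi_\loc(F_i)=\phi_i$; this is possible by Theorem~\ref{th:Grgroup2}. The operations $\star$ and $\npstar$ on constructible functions are, by their very definitions $\phi\star\psi=\int_s\phi\etens\psi$ and $\phi\npstar\psi=\int^\np_s\phi\etens\psi$, the images under $\chi_\loc$ of the corresponding sheaf operations $F\star G=\reim{s}(F\etens G)$ and $F\npstar G=\roim{s}(F\etens G)$, using that $\chi_\loc$ intertwines $\etens$ with external product of functions, $\reim{f}$ with $\int_f$, and $\roim{f}$ with $\int^\np_f$ (all stated in Section~\ref{section:fctinfty}). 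Likewise $\chi_\loc$ intertwines $\RD_X$ with duality of constructible functions. Hence applying $\chi_\loc$ to the first isomorphism of Proposition~\ref{pro:assocnpstar}, namely $F_1\npstar F_2\simeq\RD_\BBV(\RD_\BBV F_1\star\RD_\BBV F_2)$, yields immediately $\phi_1\npstar\phi_2=\RD_X(\RD_X\phi_1\star\RD_X\phi_2)$.

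For the associativity statement, I would then argue exactly as in the proof of Proposition~\ref{pro:assocnpstar}(ii): write out $(\phi_1\npstar\phi_2)\npstar\phi_3$ using the first identity three times together with the fact that $\RD_X$ is an involution (established in Section~\ref{section:cstfct}), reducing it to $\RD_X\bigl((\RD_X\phi_1\star\RD_X\phi_2)\star\RD_X\phi_3\bigr)$; then invoke associativity of the proper convolution $\star$ on $\CF(\Vi)$ — which holds because $s$ is a morphism of b-analytic manifolds and $\reim{s}$-convolution of kernels is associative (Corollary~\ref{cor:assocnpstar} and the associativity~\eqref{eq:assockernels}), so its $\chi_\loc$-image is associative — to swap the bracketing, and finally unwind the three $\RD_X$'s again to land on $\phi_1\npstar(\phi_2\npstar\phi_3)$. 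Alternatively, one can simply say that the whole computation is the image under $\chi_\loc$ of the proof of Proposition~\ref{pro:assocnpstar}.

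A cleaner route, which I would probably prefer to present, is to bypass the intermediate bookkeeping entirely: the identities to be proved are identities in $\CF(\Vi)$ between expressions each of which is the $\chi_\loc$-image of a specific object of $\Derb_\Rc(\cor_{\Vi})$ built from $F_1,F_2,F_3$ by $\etens$, $\reim{s}$, $\roim{s}$ and $\RD_\BBV$; since $\chi_\loc$ is an isomorphism of groups (Theorem~\ref{th:Grgroup2}) and all these sheaf operations are compatible with the passage to Grothendieck groups, both desired equalities follow at once from the two isomorphisms in Proposition~\ref{pro:assocnpstar}. Concretely: apply the functor ``take $\chi_\loc$'' to each side of each isomorphism in Proposition~\ref{pro:assocnpstar}, choosing $F_i$ with $\chi_\loc(F_i)=\phi_i$, and use the dictionary of Section~\ref{section:fctinfty} ($\chi_\loc\circ\etens=\etens\circ(\chi_\loc\times\chi_\loc)$, $\chi_\loc\circ\reim{s}=\int_s\circ\chi_\loc$, $\chi_\loc\circ\roim{s}=\int^\np_s\circ\chi_\loc$, $\chi_\loc\circ\RD_\BBV=\RD_X\circ\chi_\loc$).

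I do not expect any genuine obstacle: the statement is a formal consequence of Proposition~\ref{pro:assocnpstar} and the compatibility of $\chi_\loc$ with the six operations, all already available in the excerpt. The only point requiring a word of care is to make sure the chosen representatives $F_i$ really lie in $\Derb_\Rc(\cor_{\Vi})$ (so that $\reim{s}$, $\roim{s}$ and the duals stay in $\Derb_\Rc(\cor_{\Vi})$, by Propositions~\ref{pro:opersainftyext} and~\ref{pro:opersainftyint}), which is exactly what Theorem~\ref{th:Grgroup2} provides; everything else is the passage to Grothendieck groups. Accordingly I would state the proof in two lines: ``Both identities are the image by $\chi_\loc$ of the corresponding identities of Proposition~\ref{pro:assocnpstar}, after choosing $F_i\in\Derb_\Rc(\cor_{\Vi})$ with $\chi_\loc(F_i)=\phi_i$.''
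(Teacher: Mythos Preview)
Your proposal is correct, but for the first identity the paper takes a shorter route than you do. Rather than lifting to sheaves via $\chi_\loc$ and invoking Proposition~\ref{pro:assocnpstar}, the paper simply unwinds the \emph{definition} of $\int^\np_f$ in~\eqref{eq:integrinftyf2}: since $\phi_1\npstar\phi_2=\int^\np_s(\phi_1\etens\phi_2)=\RD_\BBV\int_s\RD_{\BBV\times\BBV}(\phi_1\etens\phi_2)$ and duality commutes with external product, one lands directly on $\RD_\BBV(\RD_\BBV\phi_1\star\RD_\BBV\phi_2)$ without ever passing through $\Derb_\Rc(\cor_{\Vi})$ or choosing representatives $F_i$. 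For the second identity your ``middle paragraph'' argument (apply the first identity, use that $\RD_X$ is an involution, and invoke associativity of $\star$) is exactly what the paper does. Your sheaf-lifting approach is perfectly valid and has the virtue of making explicit that the lemma is the Grothendieck-group shadow of Proposition~\ref{pro:assocnpstar}, but it imports more machinery than necessary: the point of defining $\int^\np_f$ by~\eqref{eq:integrinftyf2} is precisely that identities like the first one become tautological at the level of functions.
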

\begin{proof}
The first equality follows from the definition of $\int^\np$ (see~\eqref{eq:integrinftyf2})
and the second  equality follows from the first one.
\end{proof}

 We consider a cone $\gamma\subset\BBV$  and we assume~\eqref{hyp1}, that is, 
$\gamma$ is  a closed  convex proper subanalytic cone with non-empty interior. 
Recall that $\cor_\gamma$ is then constructible up to infinity.

\begin{definition}\label{def:gammaconst}
Let $\vphi\in\CF(\Vi)$. We say that $\vphi$ is $\gamma$-constructible if  there exists a finite covering $\BBV=\bigcup_aZ_a$ such that $\vphi=\sum_ac_a\un_{Z_a}$ and the $Z_a$'s are b-subanalytic $\gamma$-locally closed  subsets of  $\BBV$. 
 We denote by $\CF(\Vg)$ the space of  $\gamma$-constructible functions on $\BBV$.
\end{definition} 
By construction, we have $\CF(\Vg)\subset\CF(\Vi)$.

Recall notations~\eqref{not:0} and  denote by $\BBK_{\rcg}(\cor_{\Vi})$ the Grothendieck group of the category $\Derb_{\rcg}(\cor_{\Vi})$.

\begin{theorem}\label{th:Grgroup3}
The isomorphism of commutative unital algebras $\chi_\loc\cl\BBK_\Rc(\cor_\BBV)\isoto\CF(\BBV)$ induces an isomorphism
$\chi_\loc\cl\BBK_{\rcg}(\cor_{\Vi})\isoto\CF(\Vg)$.
\end{theorem}
\begin{proof}
(i) It follows from Theorem~\ref{th:rcg} that the map $\chi_\loc$ takes its values in $\CF(\Vg)$. 

\spa
(ii)  The map $\chi_\loc$ is injective by Lemma~\ref{le:projgamma}. Indeed,  if $\sha$ is a full triangulated subcategory of a triangulated category $\sht$ and if there is a projector $P\cl \sht\to\sha$, then $P$ induces a projector $\BBK(P)\cl\BBK(\sht)\to\BBK(\sha)$.
In particular, $\BBK(\sha)$ is a subgroup of $\BBK(\sht)$.

\spa
(iii) The map  $\chi_\loc$ is surjective since for $Z$ subanalytic $\gamma$-locally closed, $\un_Z=\chi_\loc(\cor_Z)$ 
and $\cor_Z\in\Derb_\Rc(\Vi)$. Moreover, $\SSi(\cor_Z)\subset \BBV\times\gamma^{\circ a}$ by~\cite{KS18}*{Cor.~1.8}.
\end{proof}

The projector of Lemma~\ref{le:projgamma} allows us to construct 
 a projector $\CF(\Vi)\to\CF(\Vg)$.

\begin{proposition}\label{pro:constproj}
\banum
\item
Let $\phi\in\CF(\Vi)$. Then $\phi\npstar\un_{\gamma^a}$ belongs to  $\CF(\Vg)$.
\item
If $\phi\in \CF(\Vg)$, then $\phi\npstar\un_{\gamma^a}=\phi$.
\eanum
\end{proposition}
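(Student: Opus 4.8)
The plan is to deduce Proposition~\ref{pro:constproj} directly from its sheaf-theoretic counterpart, Lemma~\ref{le:projgamma}, via the dictionary of Theorem~\ref{th:Grgroup3} (and Theorem~\ref{th:Grgroup2}), which says that the operations on constructible functions up to infinity are the images under $\chi_\loc$ of the corresponding operations on b-constructible sheaves. First I would observe that the operation $\phi\mapsto\phi\npstar\un_{\gamma^a}$ on $\CF(\Vi)$ is exactly the image under $\chi_\loc$ of the endofunctor $\cor_{\gamma^a}\npstar(\scbul)$ on $\Derb_\Rc(\cor_{\Vi})$: indeed, by the definition~\eqref{eq:integrinftyf2} of $\int^\np$ together with the fact (recorded just before Proposition~\ref{pro:basechinfty}) that $\int^\np_f\chi_\loc(F)=\chi_\loc(\roim{f}F)$, and the commutation of $\chi_\loc$ with $\etens$ and with $\RD$, one gets $\chi_\loc(\cor_{\gamma^a})\npstar\chi_\loc(F)=\chi_\loc(\cor_{\gamma^a}\npstar F)$ for any $F\in\Derb_\Rc(\cor_{\Vi})$, and $\chi_\loc(\cor_{\gamma^a})=\un_{\gamma^a}$.

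Granting this, part~(a) follows: given $\phi\in\CF(\Vi)$, choose by Theorem~\ref{th:Grgroup2} a sheaf $F\in\Derb_\Rc(\cor_{\Vi})$ with $\chi_\loc(F)=\phi$. By Lemma~\ref{le:projgamma} the sheaf $\cor_{\gamma^a}\npstar F$ lies in $\Derb_{\rcg}(\cor_{\Vi})$, hence by Theorem~\ref{th:Grgroup3} its local Euler--Poincar\'e index $\chi_\loc(\cor_{\gamma^a}\npstar F)=\phi\npstar\un_{\gamma^a}$ lies in $\CF(\Vg)$. For part~(b), if moreover $\phi\in\CF(\Vg)$, then by Theorem~\ref{th:Grgroup3} we may choose $F$ already in $\Derb_{\rcg}(\cor_{\Vi})$; since the functor $\cor_{\gamma^a}\npstar(\scbul)$ of Lemma~\ref{le:projgamma} is a projector onto $\Derb_{\rcg}(\cor_{\Vi})$, we have $\cor_{\gamma^a}\npstar F\simeq F$, and applying $\chi_\loc$ gives $\phi\npstar\un_{\gamma^a}=\phi$.

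The only genuinely delicate point is the very first one: carefully checking that $\phi\npstar\un_{\gamma^a}=\chi_\loc(\cor_{\gamma^a}\npstar F)$ whenever $\phi=\chi_\loc(F)$. This amounts to unwinding $\npstar$ on functions (defined through $\int^\np_s$, hence through $\RD$ and $\int_s$, hence through $\reim{s}$ and $\RD$ on sheaves) and matching it term by term with $\roim{s}(\scbul\etens\scbul)$ on sheaves, using Corollary~\ref{cor:opersainftyext} ($\roim{s}\,(\scbul)\simeq\RD_\BBV\reim{s}\RD_{\BBV\times\BBV}(\scbul)$) and the compatibility of $\chi_\loc$ with all six operations. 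Alternatively, and perhaps more cleanly, one can bypass this by arguing entirely with the non-proper convolution of functions, noting $\un_{\gamma^a}\npstar\un_{\gamma^a}=\un_{\gamma^a}$ (as $\gamma^a+\gamma^a=\gamma^a$ and $\int^\np$ of the relevant fibre is $1$) and using the same first-part representation to identify the image of the sheaf projector with the function-level operator; but invoking the sheaf projector of Lemma~\ref{le:projgamma} through $\chi_\loc$ is the shortest route and is the approach I would write up.
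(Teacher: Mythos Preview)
Your proposal is correct and follows essentially the same approach as the paper's own proof, which simply states that the result follows from Theorem~\ref{th:Grgroup3}, Lemma~\ref{le:projgamma}, and the fact that the operation $\npstar$ commutes with $\chi_\loc$. You have merely fleshed out the details of this commutation (which the paper takes for granted) and made explicit the lifting-to-sheaves argument that the paper leaves implicit.
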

\begin{proof}
The result follows from Theorem~\ref{th:Grgroup3}, Lemma~\ref{le:projgamma} and the fact that the operation $\npstar$ commutes with $\chi_\loc$. 
\end{proof}

\section{Correspondences for constructible functions}\label{section:correspondence}
This section is a variation on~\cite{Sc95} in which we replace some properness hypotheses with that of being constructible 
up to infinity.

\subsection{Correspondences}
Consider the situation of Diagram~\eqref{diag:123} when replacing the manifolds $X_i$ with b-analytic manifolds $X_{i\infty}$ ($i=1,2,3$). 
Assume to be given two locally closed subsets subanalytic up to infinity:
\eqn
&&S_1\subset X_{12},\quad S_2\subset X_{23}.\
\eneqn
We set, for $\phi\in\CF(X_{1\infty})$
\eqn
&&\shr_{S_1}(\phi)=\phi\conv\un_{S_1}=\int_{q_2}q_1^*\phi\cdot\un_{S_1}.
\eneqn
Set
\eq
\lambda&\eqdot&\un_{S_1}\cconv[2]\un_{S_2}\in\CF(X_{13\infty}).
\eneq
Applying Theorem~\ref{th:assocconvinfty}, we get that $\lambda$ is well defined and moreover
\eq\label{eq:phiconvl}
&&\shr_{S_2}\conv \shr_{S_1}(\phi)=\phi\conv\lambda.
\eneq
Now we assume that $X_1=X_3$ and we change our notations, setting
\eqn
&&X_1=X_3=X,\quad X_2=Y.
\eneqn

For $(x,x')\in X\times X$, let
\eq\label{eq:S12xx}
&&S_{12}(x,x')=\{y\in Y; (x,y)\in S_1, (y,x')\in S_2\}=(S_1\times_YS_2)\cap\opb{q_{13}}(x,x').
\eneq
Then 
\eq\label{eq:lambda12xx}
&&\lambda(x,x')=\int_{q_{13}}\un_{S_1\times_YS_2}\cdot\un_{\{\opb{q_{13}}(x,x')\}}=\int_Y\un_{S_{12}(x,x')}.
\eneq

We now consider the hypothesis
\eq\label{eq:H3}
\left\{\parbox{70ex}{
there exists $a, b \in \Z$ such that, for $(x,x')\in X\times X$:\\
$\lambda(x,x')=\left\{\begin{array}{ll}
a& \mbox{ if }x\neq x',\\
b & \mbox{ if }x=x'.
\end{array} \right.$
}\right.\eneq
Writing $\lambda(x,x')=(b-a)\un_\Delta+a\un_{X\times X}$, we get:

\begin{corollary}[{\cite{Sc95}*{Th.~3.1}}]\label{cor:invfor}   
Assume~\eqref{eq:H3}. Let $\phi \in \CF(X)$. Then:
\eqn
&&\shr_{S_2} \circ \shr_{S_1}(\phi) = (b - a)\phi +a\int_X\phi.
\eneqn
\end{corollary}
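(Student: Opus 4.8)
The strategy is to reduce the claimed identity to the two already-established facts: the associativity of convolution of constructible functions up to infinity (Theorem~\ref{th:assocconvinfty}), which yields~\eqref{eq:phiconvl}, and the explicit evaluation~\eqref{eq:lambda12xx} of the composed kernel $\lambda$. Under hypothesis~\eqref{eq:H3} the function $\lambda$ on $X\times X$ takes the value $b$ on the diagonal $\Delta=\Delta_X$ and the value $a$ off the diagonal, so that one has the decomposition $\lambda=(b-a)\un_\Delta+a\,\un_{X\times X}$ in $\CF(X_{13\infty})$; note both $\Delta$ and $X\times X$ are b-subanalytic in $\bX\times\bX$, so this is an identity of constructible functions up to infinity. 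The heart of the argument is then just to compute $\phi\conv\un_\Delta$ and $\phi\conv\un_{X\times X}$.

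\textbf{Key steps.} First I would record~\eqref{eq:phiconvl}: by definition $\shr_{S_2}\circ\shr_{S_1}(\phi)=(\phi\conv\un_{S_1})\conv\un_{S_2}$, and Theorem~\ref{th:assocconvinfty} (associativity of $\cconv$) together with $\lambda=\un_{S_1}\cconv\un_{S_2}$ gives $\shr_{S_2}\circ\shr_{S_1}(\phi)=\phi\conv\lambda$. Second, by linearity of $\conv$ in the kernel variable (which is clear from the defining formula~\eqref{eq:convfctinfty}, since $\int_{q_{13}}$, $q_{12}^*$, $q_{23}^*$ and multiplication are all linear),
\eqn
&&\phi\conv\lambda=(b-a)\,(\phi\conv\un_\Delta)+a\,(\phi\conv\un_{X\times X}).
\eneqn
Third, I compute the two terms. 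For the diagonal, $\un_\Delta=\un_{\Gamma_{\id_X}}$ is the kernel of the identity morphism $\id_\fX$, so $\phi\conv\un_\Delta=\phi$; concretely, using~\eqref{eq:convfctinfty} with $q_{13}$ restricted to $X\times\Delta$ being an isomorphism onto $X$, the pushforward $\int_{q_{13}}$ of $q_{12}^*\phi\cdot\un_{X\times\Delta}$ returns $\phi$. For the second term, with $X_1=X_2=X$ and $q_2\cl X\times X\to X$ the second projection, $\phi\conv\un_{X\times X}=\int_{q_2}q_1^*\phi$ evaluated at $x'\in X$ equals $\int_X\phi\cdot\un_{\opb{q_2}(x')}=\int_X\phi$ by~\eqref{eq:integrinftyf}, i.e. $\phi\conv\un_{X\times X}$ is the constant function $\bl\int_X\phi\br\un_X$. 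Assembling these gives $\shr_{S_2}\circ\shr_{S_1}(\phi)=(b-a)\phi+a\int_X\phi$, as claimed.

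\textbf{Main obstacle.} None of the steps is deep; the only point needing a little care is the bookkeeping in step three, namely checking that $\phi\conv\un_\Delta=\phi$ and $\phi\conv\un_{X\times X}=\bl\int_X\phi\br\un_X$ with the correct (non-proper vs.\ proper) integrals. Both computations are instances of Lemma~\ref{le:opbyker} transported to constructible functions: $\un_\Delta=\cor_{\Gamma_{\id}}$ and the composition with a graph kernel implements inverse/direct image along the morphism, and here the relevant morphism for the first kernel is $\id_X$ and for the second is $a_X\cl X\to\rmpt$ followed by the inclusion $\rmpt\hookrightarrow X$, so that $\phi\conv\un_{X\times X}=\bl\int_X\phi\br\un_X$ with the integral of~\eqref{eq:intnp} (the one used throughout, without the $\np$ superscript). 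Since~\eqref{eq:H3} was read off from $\lambda(x,x')=\int_Y\un_{S_{12}(x,x')}$ in~\eqref{eq:lambda12xx}, which itself rests on Theorem~\ref{th:assocconvinfty} being applicable, no properness assumptions intervene anywhere, and the proof is complete.
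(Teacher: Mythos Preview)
Your proposal is correct and follows essentially the same approach as the paper: the paper simply writes $\lambda=(b-a)\un_\Delta+a\un_{X\times X}$ immediately before the corollary and invokes~\eqref{eq:phiconvl}, leaving implicit the two computations $\phi\conv\un_\Delta=\phi$ and $\phi\conv\un_{X\times X}=(\int_X\phi)\un_X$ that you spell out in detail.
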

Here, $a\int_X\phi\in\Z$ is identified with the constant function $(a\int_X\phi)\cdot\un_X$.

\subsubsection*{Application to flag manifolds}
Let $\BBW$ be a real $(n+1)$-dimensional vector space (with $n\geq 2$) and denote by $F_{n+1}(p,q)$, with $1\leq p\leq q \leq n$, the set of pairs 
$\{(l,h)\}$ of linear subspaces of $\BBW$ with $l\subset h$ and $ \dim l = p$, $\dim h=q$. One sets 
$F_{n+1}(p)= F_{n+1}(p,p)$ and denotes as usual by $q_1$ and $q_2$ the two projections defined on 
$F_{n+1}(p)\times F_{n+1}(q)$.
Then $F_{n+1}(p,q)$ is a real compact submanifold of $F_{n+1}(p)\times F_{n+1}(q)$, called the incidence relation.
We denote by
$F_{n+1}(q,p)$ its image by the map  $F_{n+1}(p)\times F_{n+1}(q)\to F_{n+1}(q)\times F_{n+1}(p),
(x,y)\mapsto (y,x)$. In the sequel, we set
\eqn
&&X= F_{n+1}(p),  \quad Y=F_{n+1}(q),\quad S=F_{n+1}(p,q)\subset X\times Y,\quad S'=F_{n+1}(q,p)\subset Y\times X.
\eneqn
Now we shall assume $p=1$ and  $q>1$. Recall that  $F_{n+1}(1) = \BBP_n$, the $n$-dimensional
real projective space. 

In order to apply Corollary~\ref{cor:invfor}, it is enough to calculate $\lambda_{12}(x,x')$ given by~\eqref{eq:lambda12xx}
and~\eqref{eq:S12xx} with $S_1=S$ and $S_2=S'$. 
Set
\eqn
&&\mu_{n+1}(q)=\chi(F_{n+1}(q)).
\eneqn
\begin{proposition}\label{pro:invforA}
Let $\phi \in \CF(\BBP_n)$. Then:
\eqn
&&\shr_{(n+1;q,1)} \circ \shr_{(n+1;1,q)} (\phi) = (\mu_n(q-1) - \mu_{n-1}(q-2))\phi +\mu_{n-1}(q-2)\int_{\BBP_n}\phi. 
\eneqn
\end{proposition}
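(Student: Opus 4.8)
The strategy is to apply Corollary~\ref{cor:invfor} with $X=\BBP_n$, $Y=F_{n+1}(q)$, $S_1=S=F_{n+1}(1,q)$ and $S_2=S'=F_{n+1}(q,1)$, so that the only thing to verify is hypothesis~\eqref{eq:H3}, namely that $\lambda(x,x')$ depends only on whether $x=x'$ or $x\neq x'$, and to identify the two values $a$ and $b$ with $\mu_{n-1}(q-2)$ and $\mu_n(q-1)$ respectively. By~\eqref{eq:lambda12xx} we have $\lambda(x,x')=\int_Y\un_{S_{12}(x,x')}=\chi_c(S_{12}(x,x'))$, and by~\eqref{eq:S12xx},
\[
S_{12}(x,x')=\{h\in F_{n+1}(q);\ x\subset h,\ x'\subset h\},
\]
where $x,x'$ are lines in $\BBW$ (recall $p=1$, so $F_{n+1}(1)=\BBP_n$ and a point of $X$ is a line). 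This set is closed, hence compact, so $\chi_c=\chi$ here and no properness subtleties arise.

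The first key step is the geometric identification of $S_{12}(x,x')$. If $x=x'$, then $S_{12}(x,x)=\{h;\ x\subset h,\ \dim h=q\}$; fixing the line $x$, the $q$-planes containing it correspond bijectively and real-analytically to $(q-1)$-planes in the quotient $\BBW/x$, which is an $n$-dimensional space. Hence $S_{12}(x,x)\simeq F_n(q-1)$ and $\lambda(x,x)=\chi(F_n(q-1))=\mu_n(q-1)$, giving $b=\mu_n(q-1)$. If $x\neq x'$, then $x+x'$ is a $2$-plane, and $\{h;\ x\subset h,\ x'\subset h\}=\{h;\ x+x'\subset h\}$; the $q$-planes containing the fixed $2$-plane $x+x'$ correspond to $(q-2)$-planes in $\BBW/(x+x')$, which has dimension $n-1$. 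Hence this set is isomorphic to $F_{n-1}(q-2)$, and $\lambda(x,x')=\chi(F_{n-1}(q-2))=\mu_{n-1}(q-2)$, giving $a=\mu_{n-1}(q-2)$. This shows~\eqref{eq:H3} holds with exactly these values. The second step is then purely formal: plug $a=\mu_{n-1}(q-2)$, $b=\mu_n(q-1)$ into the conclusion of Corollary~\ref{cor:invfor}, i.e.\ $\shr_{S'}\circ\shr_S(\phi)=(b-a)\phi+a\int_X\phi$, and rename $\shr_S=\shr_{(n+1;1,q)}$, $\shr_{S'}=\shr_{(n+1;q,1)}$ to match the statement.

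The main (and essentially only) obstacle is the verification that the fibers $S_{12}(x,x')$ are genuinely homeomorphic (indeed, isomorphic as real analytic manifolds, or at least have the same Euler characteristic) to $F_n(q-1)$ and $F_{n-1}(q-2)$ respectively, uniformly in $(x,x')$ within each of the two strata $\{x=x'\}$ and $\{x\neq x'\}$; this requires choosing, locally, a complement to $x$ (resp.\ to $x+x'$) and checking the resulting trivialization is real analytic, together with noting that the set-theoretic equality $\{x\subset h,\ x'\subset h\}=\{x+x'\subset h\}$ for $x\neq x'$ makes the dependence on the pair factor through the $2$-plane $x+x'$. One should also remark that all sets involved are closed subanalytic (in fact, subsets of compact real manifolds), so they are automatically subanalytic up to infinity and the constructible-function machinery of Section~\ref{section:fctinfty} — in particular Theorem~\ref{th:assocconvinfty}, used to define $\lambda$ via~\eqref{eq:phiconvl} — applies without change. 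Everything else is a direct quotation of Corollary~\ref{cor:invfor}.
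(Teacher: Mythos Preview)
Your proof is correct and follows exactly the same approach as the paper: identify $S_{12}(x,x')$ as the set of $q$-planes in $\BBW$ containing both lines $x$ and $x'$, observe that this is isomorphic to $F_n(q-1)$ when $x=x'$ and to $F_{n-1}(q-2)$ when $x\neq x'$, and then apply Corollary~\ref{cor:invfor}. Your write-up is simply more detailed than the paper's, which records only the fiber identification and leaves the rest implicit.
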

\begin{proof}
 Let us represent $x$ and $x'$ by lines in $\BBW$ and $y\in F_{n+1}(q)$ by a $q$-dimensional linear subspace. 
Then the set  $S_{12}(x,x')$ is the set of $q$-dimensional linear subspaces of $\BBW$ containing both the line $x$ and the line $x'$.
This set is isomorphic to $ F_{n-1}(q-2)$ if $x\neq x'$ and to $F_{n}(q-1)$ if $x=x'$.
\end{proof}

Of course, this formula is  interesting only when $\mu_n(q-1) \neq \mu_{n-1}(q-2))$.

\subsection{Application: the Radon transform}\label{subsection:Radon}
This section is extracted from~\cite{Sc95}. Recall that $n\geq2$.

One can roughly describe the Radon transform as follows. How to reconstruct a function (say with compact support)  on a real vector space $\BBV$ from the knowledge of its integral along all affine hyperplanes? Since the family of these hyperplanes (including the hyperplane at infinity) is given by the dual projective space $\BBP^*$, where $\BBP$ is the projective compactification  of $\BBV$, it is natural  to replace  $\BBV$ with $\BBP$.

We have $F_{n+1}(1)=\BBP_n$, the $n$-dimensional projective space and $F_{n+1}(n)=\BBP^*_n$, the dual projective space.
The Radon transform thus corresponds to the case $p=1$, $q=n$.  

With the preceding notations, the incidence relation $S$ is given by
\eqn
&&S= F_{n+1}(1,n)=\{(x,y)\in\BBP_n\times\BBP^*_n; \langle x,y\rangle=0\}.
\eneqn
The Radon transform of $\phi\in\CF(\BBP_n)$, an element of $\CF(\BBP^*_n)$, is defined by
\eq\label{eq:rsphi}
&&\shr_{(n+1;1,n)}(\phi)=\int_{\BBP_n}\un_S\cdot q_1^*\phi=\phi\conv \un_S.
\eneq
For $y\in\BBP^*_n$, we shall denote by $h_y$ its image in $\BBP_n$ by the incidence relation:
\eqn
&&h_y=\{x\in\BBP_n, \langle x,y\rangle=0\}.
\eneqn
Therefore, 
\eqn
&&\shr_{(n+1;1,n)}(\phi)(y)=\int_{\BBP_n} \phi\cdot\un_{h_y}.
\eneqn
Recall that the Euler-Poincar\'e index of $\BBP_n$ is
given by the formula:

\eq\label{eq:chiPn}
\chi (\BBP_n) = \left \{
\begin{array}{ll}
1 & \mbox{ if $n$ is even,}\\
0 & \mbox{ if $n$ is odd.}
\end{array}
\right. 
\eneq
Applying Proposition~\ref{pro:invforA}  together with~\eqref{eq:chiPn}, we get:

\begin{corollary}
Let $\phi \in \CF(\BBP_n)$. Then:
\eqn
\shr_{(n+1,n,1)} \circ \shr_{(n+1;1,n)} (\phi) = \left \{
\begin{array}{ll}
\phi & \mbox{ if $n$ is odd,}\\
-\phi +\int_{\BBP_n}\phi & \mbox{ if $n$ is even.}
\end{array}
\right. 
\eneqn
\end{corollary}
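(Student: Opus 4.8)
The plan is to read off the statement from Proposition~\ref{pro:invforA} by taking $q=n$ and then evaluating the two coefficients $\mu_n(q-1)-\mu_{n-1}(q-2)$ and $\mu_{n-1}(q-2)$ with the help of the Euler characteristic formula~\eqref{eq:chiPn}. Recall that $\mu_{n+1}(q)=\chi(F_{n+1}(q))$ is the Euler--Poincar\'e index of the Grassmannian of $q$-planes in an $(n+1)$-dimensional real vector space, and that $F_{n+1}(1)=\BBP_n$, $F_{n+1}(n)=\BBP^*_n$. For $n\geq2$ this will suffice; the value $n=1$ will be handled separately, since there the hypothesis $q>1$ of Proposition~\ref{pro:invforA} is not satisfied.

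First I would specialize Proposition~\ref{pro:invforA} to $q=n$, which gives
\[
\shr_{(n+1;n,1)}\circ\shr_{(n+1;1,n)}(\phi)=\bl\mu_n(n-1)-\mu_{n-1}(n-2)\br\phi+\mu_{n-1}(n-2)\int_{\BBP_n}\phi.
\]
Next I would identify the two Grassmannians occurring in the coefficients. Sending an $(n-1)$-dimensional subspace to its annihilator identifies $F_n(n-1)$ with $\BBP^*_{n-1}$, which is in turn diffeomorphic to $\BBP_{n-1}$; similarly $F_{n-1}(n-2)\cong\BBP^*_{n-2}\cong\BBP_{n-2}$. Hence $\mu_n(n-1)=\chi(\BBP_{n-1})$ and $\mu_{n-1}(n-2)=\chi(\BBP_{n-2})$, so that
\[
\shr_{(n+1;n,1)}\circ\shr_{(n+1;1,n)}(\phi)=\bl\chi(\BBP_{n-1})-\chi(\BBP_{n-2})\br\phi+\chi(\BBP_{n-2})\int_{\BBP_n}\phi.
\]

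It then remains to substitute the parity values from~\eqref{eq:chiPn}. For $n$ odd, $n-1$ is even and $n-2$ is odd, so the coefficient of $\phi$ is $1-0=1$ and that of $\int_{\BBP_n}\phi$ is $0$, giving $\phi$. For $n$ even with $n>0$ one has $n\geq2$, so $n-1$ is odd and $n-2\geq0$ is even, whence the coefficients are $0-1=-1$ and $1$, giving $-\phi+\int_{\BBP_n}\phi$ (here $\BBP_0$ is a point, $\chi(\BBP_0)=1$). Finally, for $n=1$ one argues directly: $S=F_2(1,1)$ is the diagonal of $\BBP_1\times\BBP_1$, so $\shr_{(2;1,1)}$ is the identity on $\CF(\BBP_1)$ and the claimed identity $\shr\circ\shr(\phi)=\phi$ holds at once, in agreement with the odd case.

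There is no real obstacle: the proof is a one-line specialization of Proposition~\ref{pro:invforA} followed by a parity count. The only steps needing a little care are the diffeomorphism $F_n(n-1)\cong\BBP_{n-1}$ used to express the coefficients through projective spaces, and the bookkeeping for small $n$: in particular $n=2$, where $\BBP_{n-2}$ collapses to a point, and $n=1$, where Proposition~\ref{pro:invforA} does not literally apply and a separate (trivial) check is required.
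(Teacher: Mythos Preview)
Your proposal is correct and follows exactly the approach the paper intends: the corollary is deduced by specializing Proposition~\ref{pro:invforA} to $q=n$ and plugging in the parity values from~\eqref{eq:chiPn}, after identifying $F_n(n-1)\cong\BBP_{n-1}$ and $F_{n-1}(n-2)\cong\BBP_{n-2}$. Your explicit treatment of the small cases ($n=2$ with $\BBP_0$ a point, and $n=1$ where the hypothesis $q>1$ of Proposition~\ref{pro:invforA} fails and the diagonal argument applies) is a welcome bit of extra care that the paper leaves implicit.
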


Now assume $\dim \BBV = 3$ and let us calculate the Radon transform of the
characteristic function ${\bf 1}_K$ of a compact subanalytic subset $K$ of $\BBV$ (see~\eqref{eq:chiK}). First, consider a
compact subanalytic subset $L$ of a two dimensional affine vector space $W$. By Poincar\'e's duality,
there is an isomorphism $\mbox{H}^1_L(W;\Q_W) \simeq \mbox{H}^1(L;\Q_L)$ and moreover there is a
short exact  sequence:
$$0\to \mbox{H}^0(W;\Q_W)\to \mbox{H}^0(W\setminus L;\Q_W)\to \mbox{H}^1_L(W;\Q_W) \to 0,$$
from which one deduces that:
\eqn
&& \rb_1(L) = \rb_0(W\setminus L) - 1,
\eneqn
where $\rb_i$ is the $i$-th Betti number.
Note that $\mbox{b}_0(W\setminus L)$ is the number of connected components of $W\setminus L$, hence 
$\mbox{b}_1(L)$ is the ``number of holes'' of the compact set $L$. We may sumarize:
\begin{corollary}
The value at $y\in\BBP^*_3$ of the Radon transform of ${\bf 1}_K$ is the number of connected components of
$K\cap h_y$ minus the number of its holes. 
\end{corollary}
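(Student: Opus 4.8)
The plan is to evaluate the Radon transform pointwise, recognize the resulting integral as the Euler--Poincar\'e index of a plane slice of $K$, and then invoke the two-dimensional computation recalled just before the statement.

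First I would fix $y\in\BBP^*_3$ and apply the formula $\shr_{(n+1;1,n)}(\phi)(y)=\int_{\BBP_n}\phi\cdot\un_{h_y}$ with $n=3$ and $\phi=\un_K$, which gives $\shr_{(4;1,3)}(\un_K)(y)=\int_{\BBP_3}\un_{K\cap h_y}$. Since $K$ is compact and subanalytic in $\BBV\subset\BBP_3$, the slice $L\eqdot K\cap h_y$ is compact and subanalytic, so by $\int_X\un_Z=\chi_c(Z)$ together with $\chi_c(L)=\chi(L)$ for $L$ compact one gets $\shr_{(4;1,3)}(\un_K)(y)=\chi(L)$. Next I would pass to the affine picture: if $y$ represents the hyperplane at infinity of $\BBV$ then $h_y\cap\BBV=\emptyset$, hence $L=\emptyset$, $\chi(L)=0$, and the claim reads $0=0-0$; otherwise $W\eqdot h_y\cap\BBV$ is an affine plane and $L=K\cap W$ is a compact --- hence proper, $K$ being bounded --- subanalytic subset of $W\simeq\R^2$.

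For such $L$ one has $H^k(L;\cor)=0$ for $k\geq 2$: embedding $W\simeq\R^2$ in $S^2$ as the complement of a point, Alexander duality gives $\widetilde H^2(L)\simeq\widetilde H_{-1}(S^2\setminus L)=0$, and $L$ being subanalytic is triangulable so that sheaf and singular cohomology agree. Hence $\chi(L)=b_0(L)-b_1(L)$, where $b_0(L)$ is the number of connected components of $L=K\cap h_y$ and, by the discussion preceding the statement (Poincar\'e duality $H^1_L(W)\simeq H^1(L)$ together with the exact sequence $0\to H^0(W)\to H^0(W\setminus L)\to H^1_L(W)\to 0$, so that $b_1(L)=b_0(W\setminus L)-1$ is the number of bounded components of the complement), $b_1(L)$ is precisely the number of holes of $L$. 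Combining these identities yields that $\shr_{(4;1,3)}(\un_K)(y)$ equals the number of connected components of $K\cap h_y$ minus its number of holes. The computation is essentially routine; the only two points worth a word of care are the separate handling of the hyperplane at infinity and the vanishing $H^{\geq 2}(L)=0$, both being standard facts about compact subanalytic subsets of the plane.
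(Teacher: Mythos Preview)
Your argument is correct and follows the same route as the paper: evaluate the Radon transform at $y$ as $\chi(K\cap h_y)$ via~\eqref{eq:chiK}, then use the two-dimensional computation $b_1(L)=b_0(W\setminus L)-1$ recalled just before the statement to interpret $\chi(L)=b_0(L)-b_1(L)$ as ``connected components minus holes.'' The only additions you make are the explicit treatment of the hyperplane at infinity and the justification that $H^{\geq 2}(L)=0$, both of which the paper leaves implicit; these are welcome details rather than a different approach.
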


The inversion formula of the Radon transform tells us how to reconstruct the set $K$ from the
knowledge of the number of connected components and holes of all its affine slices.

\section*{}
{\bf Conflict of interest}\\
The author declares that he has no conflict of interest.

\providecommand{\bysame}{\stLeavevmode\hbox to3em{\hrulefill}\thinspace}

\vspace*{1cm}
\noindent
\parbox[t]{21em}
{\scriptsize{
Pierre Schapira\\
Sorbonne Universit{\'e} and Universit{\'e} Paris Cit{\'e}, CNRS, IMJ-PRG\\
Campus Pierre et Marie Curie\\
F-75005 Paris France\\
e-mail: pierre.schapira@imj-prg.fr\\
http://webusers.imj-prg.fr/\textasciitilde pierre.schapira/
}}
\end{document}